\renewcommand{\d}{\mathrm{d}}
\newcommand{\D}{\mathrm{D}}
\newtheorem{Thm}{Theorem}[section]
\newtheorem{Lem}[Thm]{Lemma}
\newtheorem{Prop}[Thm]{Proposition}
\newtheorem{Cor}[Thm]{Corollary}
\newtheorem{Rem}[Thm]{Remark}
\newtheorem{Def}[Thm]{Definition}
\newtheorem{Ex}[Thm]{Example}
\newtheorem{Nota}[Thm]{Notation}
\newtheoremstyle{named}{}{}{\itshape}{}{\bfseries}{.}{.5em}{#1 #3}
\theoremstyle{named}
\def\R{\mathbb{R}}
\def\Q{\mathbb{Q}}
\def\C{\mathbb{C}}
\def\Z{\mathbb{Z}}
\def\fb{\mathfrak{b}}
\def\g{\mathfrak{g}}
\def\sl{\mathfrak{sl}}
\def\cD{\mathcal{D}}
\def\cE{\mathcal{E}}
\def\cF{\mathcal{F}}
\def\cH{\mathcal{H}}
\def\cI{\mathcal{I}}
\def\cM{\mathcal{M}}
\def\cP{\mathcal{P}}
\def\cR{\mathcal{R}}
\def\cS{\mathcal{S}}
\def\cU{\mathcal{U}}
\def\cX{\mathcal{X}}
\def\a{\alpha}
\def\b{\beta}
\def\c{\gamma}
\def\D{\Delta}
\def\d{\delta}
\def\l{\lambda}
\def\L{\Lambda}
\def\s{\sigma}
\def\w{\omega}
\def\bd{\mathbf{d}}
\def\be{\mathbf{e}}
\def\bf{\mathbf{f}}
\def\bi{\mathbf{i}}
\def\bo{\mathbf{o}}
\def\bT{\mathbf{T}}
\def\=>{\Longrightarrow}
\def\inj{\hookrightarrow}
\def\corr{\longleftrightarrow}
\def\to{\longrightarrow}
\def\ox{\otimes}
\def\o+{\oplus}
\def\bo+{\bigoplus}
\def\<{\langle}
\def\>{\rangle}
\def\({\left(}
\def\){\right)}
\def\oo{\infty}
\def\^{\wedge}
\def\+{\dagger}
\def\sub{\subset}
\def\inv{^{-1}}
\def\half{\frac{1}{2}}
\def\dd[#1,#2]{\frac{d#1}{d#2}}
\def\del[#1,#2]{\frac{\partial #1}{\partial #2}}
\def\over[#1]{\overline{#1}}
\def\vec[#1]{\overrightarrow{#1}}
\def\tab{\;\;\;\;\;\;}
\newcommand{\til}[1]{\widetilde{#1}}
\newcommand{\what}[1]{\widehat{#1}}
\newcommand{\case}[2][lllllllllllllllllllllllllllllllllllll]{\left\{\begin{array}{#1}#2 \\ \end{array}\right.}
\newcommand{\Eq}[1]{\begin{align}#1\end{align}}
\newcommand{\Eqn}[1]{\begin{align*}#1\end{align*}}
\tikzset{>=latex}
\tikzstyle{vthick}=[line width=1.8pt]
\newcommand*{\DashedArrow}[1][]{\mathbin{\tikz [baseline=-0.25ex,-latex, dashed,#1] \draw [#1] (0pt,0.5ex) -- (1.3em,0.5ex);}}%
\newcommand\drawpath[2]{%
  \foreach \too [count=\c from 1] in {#1}
  {
  \ifthenelse{\c=1}
  {\xdef\from{\too}}
  {\path (\from) edge [->, #2] (\too);
    \xdef\from{\too}}
  };
}
\begin{document}
\title{Cluster realization of positive representations of \\split real quantum Borel subalgebra}

\author{  Ivan Chi-Ho Ip\footnote{
         	   Center for the Promotion of Interdisciplinary Education and Research/\newline
		   Department of Mathematics, Graduate School of Science, Kyoto University, Japan
		\newline
		Email: ivan.ip@math.kyoto-u.ac.jp
          }
}
         
\date{\today}

\numberwithin{equation}{section}

\maketitle
\begin{center}
\vspace{-5ex}
\textit{Dedicated to the memory of Ludvig D. Faddeev}
\end{center}

\begin{abstract}
In our previous work \cite{Ip14}, we studied the positive representations of split real quantum groups $\mathcal{U}_{q\tilde{q}}(\mathfrak{g}_\mathbb{R})$ restricted to its Borel part, and showed that they are closed under taking tensor products. However, the tensor product decomposition was only constructed abstractly using the GNS-representation of a $C^*$-algebraic version of the Drinfeld-Jimbo quantum groups. In this paper, using the recently discovered cluster realization of quantum groups \cite{Ip16}, we write down the decomposition explicitly by realizing it as a sequence of cluster mutations in the corresponding quiver diagram representing the tensor product.
\end{abstract}

{\small  {\textbf {2010 Mathematics Subject Classification.} Primary 81R50, Secondary 22D25}}

{\small  {\textbf{Keywords.} positive representations, split real quantum groups, modular double, quantum cluster algebra, tensor category}
%==============================================================================
\section{Introduction}\label{sec:intro}
\subsection{Positive representations of split real quantum groups}
To any finite dimensional complex simple Lie algebra $\g$, Drinfeld \cite{D} and Jimbo \cite{J} defined a remarkable Hopf algebra $\cU_q(\g)$ known as the quantum group. The notion of \emph{positive representations} was introduced in \cite{FI} as a new research program devoted to the representation theory of its split real form $\cU_{q\til{q}}(\g_\R)$ which uses the concept of Faddeev's modular double \cite{Fa1,Fa2}, and generalizes the case of $\cU_{q\til{q}}(\sl(2,\R))$ studied extensively by Teschner {\it et al.} \cite{BT, PT1, PT2} from the physics point of view.

Explicit construction of the positive representations $\cP_\l$ of $\cU_{q\til{q}}(\g_\R)$, parametrized by the $\R_+$-span of positive weights $\l\in P_\R^+$, was constructed in \cite{Ip2,Ip3} for simple Lie algebra $\g$ of all types, where the generators of the quantum groups are realized by positive essentially self-adjoint operators acting on certain Hilbert space, and compatible with the modular double structure (see Section \ref{sec:prelim:pos} for a review). Although the representations involve unbounded operators, the algebraic relations are well-defined and are unitary equivalent to the \emph{integrable representations} of the quantum plane in the sense of Schm\"udgen \cite{Ip1, Sch}.

A long standing conjecture in the theory of positive representations is the closure under taking tensor product:
\Eq{
\cP_\a\ox \cP_\b\simeq \int_{\c\in P_\R^+} \cP_\c \ox \cM_{\a\b}^{\c} d\mu(\c)
}
for some Plancherel measure $d\mu(\c)$ and multiplicity module $\cM_{\a\b}^{\c}$. The simplest case of $\cU_{q\til{q}}(\sl(2,\R))$ has been proved previously in \cite{PT2}, which is related to the fusion relations of quantum Liouville theory. The case in type $A_n$ is recently solved algebraically in \cite{SS17} using a cluster realization \cite{Ip16,SS16} of the positive representations discussed below, and is related to certain quantum open Toda systems. For other types however, the question is still open, but as a first step, we showed in \cite{Ip14} that the restriction of $\cP_\l$ to the Borel part $\cU_{q\til{q}}(\fb_\R)$ is indeed closed under taking tensor product:
\begin{Thm}\label{mainthm} Let $\cP_\l^\fb$ be the positive representations $\cP_\l$ restricted to the Borel part $\cU_{q\til{q}}(\fb_\R)$. Then $\cP_\l^\fb\simeq \cP^\fb$ does not depend on $\l$, and we have the unitary equivalence
\Eq{\label{decomp}
\cP^\fb\ox \cP^\fb \simeq \cP^\fb\ox \cM,
}
where $\cM$ is a multiplicity module in which $\cU_{q\til{q}}(\fb_\R)$ acts trivially.
\end{Thm}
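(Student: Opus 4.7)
My plan is to split the theorem into two parts, treating the $\l$-independence first and then the tensor product decomposition \eqref{decomp} via an explicit sequence of cluster mutations. For the $\l$-independence, I would inspect the formulas from \cite{Ip2, Ip3} for the action of $E_i$ and $K_i$ on $\cP_\l \simeq L^2(\R^N)$: the weight $\l$ enters only through additive shifts of the position variables inside certain positive exponentials, and since $F_i$ does not appear in $\cU_{q\til q}(\fb_\R)$, all of these shifts are absorbed simultaneously by a single unitary translation operator. This identifies $\cP_\l^\fb$ with $\cP_0^\fb =: \cP^\fb$ unitarily for every $\l \in P_\R^+$, reducing \eqref{decomp} to a statement about a single fixed Borel representation.

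For the main decomposition, the plan is to use the cluster realization of $\cU_{q\til q}(\fb_\R)$ from \cite{Ip16}, which embeds the Borel subalgebra into a quantum torus algebra $\cX_\bq(Q)$ attached to a quiver $Q$, so that $\cP^\fb$ arises as the canonical Schr\"odinger-type $L^2$ representation of $\cX_\bq(Q)$. The tensor product $\cP^\fb\ox\cP^\fb$ then corresponds to the amalgamated quiver built from two copies of $Q$ in the way prescribed by the coproduct $\D$, and the Borel generators act on the associated Hilbert space via $\D$ composed with the cluster embedding on each factor. I would next produce an explicit finite sequence of quiver mutations sending this amalgamated quiver to a new quiver $Q'$ such that, in the resulting cluster chart, the image of $\D(\cU_{q\til q}(\fb_\R))$ is supported on a sub-quiver $Q'' \sub Q'$ isomorphic to $Q$, while the remaining cluster variables pairwise commute with every Borel generator. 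Since each elementary mutation lifts canonically to a unitary on the Hilbert space via the quantum dilogarithm intertwiner, the composition of these unitaries realizes \eqref{decomp}, with $Q''$ supplying the right-hand $\cP^\fb$ factor and the leftover vertices generating the multiplicity module $\cM$ on which the Borel acts trivially.

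The main obstacle, I expect, is the explicit determination of this mutation sequence for a general finite-type $\g$, together with the verification that after mutation the image of $\D$ factors through the chosen sub-quiver. My approach would be inductive on a reduced expression of the longest Weyl element $w_0$: at each simple root $\a_i$ one applies a local sequence of mutations modeled on the braid-type moves used in \cite{SS16, SS17}, which isolates each generator $E_i$ onto one copy at the cost of producing a commuting cluster subalgebra on the other. Concatenating these local moves and tracking the exchange matrix, the triviality of the Borel action on $\cM$ reduces to the vanishing of certain entries of the final exchange matrix, which can be read off directly from $Q'$. As a consistency check, specializing to type $A_1$ should reproduce the Borel-level statement underlying the $\cU_{q\til q}(\sl(2,\R))$ fusion result of \cite{PT2}, and the general type $A_n$ case should be compatible with the cluster decomposition of \cite{SS17}.
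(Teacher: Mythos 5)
The $\l$-independence step has a genuine gap. In the paper's convention the Borel part is generated by $\{F_i,K_i^{-1}\}$ rather than $\{E_i,K_i\}$, but the problem with your argument is the same either way: in the explicit realization each Borel generator of $\cP_\l^\fb$ is a sum of \emph{two} families of monomials, $\bf_i=\bf_i^-+\bf_i^+$ with $\bf^{k,\pm}=\exp\bigl(\pm(\cdots+\pi b_{i_k}u_k+2\pi b_{i_k}\l_{i_k})+2\pi b_{i_k}p_k\bigr)$, and the two halves carry $\l$ and the position variables with \emph{opposite} signs. A shift of $p_k$ multiplies $\bf^{k,+}$ and $\bf^{k,-}$ by the same factor, while the shift of the $u$'s needed to remove $\l$ from $K_i$ produces opposite factors on the two halves, so no single translation operator absorbs $\l$ from all generators simultaneously. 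The translation trick only works after one has first passed to the standard form $\til{\cP^\fb}$ in which $\bf_i=\sum_{k:i_k=i}\bf^{k,+}$ and each monomial contains a distinct momentum operator (Lemma 5.4 of the paper); the unitary implementing $\cP_\l^\fb\simeq\til{\cP^\fb}$ is not a translation but the product of $N$ quantum dilogarithms $\Phi_1=\prod_k g_{b_{i_k}}(\underline{\be}_{i_k}^-\ox\underline{\bf}^{k,-})$ realizing a flip of triangulation. This first flip is the actual content of the claim that $\cP_\l^\fb\simeq\cP^\fb$ is independent of $\l$, and your proposal skips it.

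For the tensor-product step your outline matches the paper's strategy (amalgamated quiver, mutation sequence pushing the image of $\D$ onto one copy of the basic quiver, dilogarithm unitaries for each mutation), but the ingredient that turns the plan into a proof is missing. The mutation sequence is not constructed by induction on a reduced word with ad hoc braid moves; it is read off from the factorization $\over[\cR]=\cR_4\cR_3\cR_2\cR_1$ of the universal $R$-matrix, with intertwiner $\Phi_3=\prod_k g_{b_{i_k}}(\underline{\be}_{i_k}^-\ox\underline{\bf}^{k,+})$, and the verification that $Ad_{\Phi_3}\,\D(\bf_i)=\bf_i\ox 1$ is a direct computation using the Heisenberg-double relations $[\be_i^{\pm},\bf_j^{\mp}]/(q_i-q_i^{-1})=\mp\d_{ij}K_i^{\pm1}$ together with the identity $g_b(v)\,u\,g_b(v)^*=c+u$; equivalently, one must prove that these mutations preserve the $F_i$-paths. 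You would need to supply this (or an equivalent) verification. Finally, note that after $\Phi_3$ the Cartan part is still $\D(K_i')=K_i'\ox K_i'$, so an additional translation unitary is required to reach $K_i'\ox 1$ and hence the trivial action on $\cM$; this is again the shift argument and again only works in the standard form, which is another reason the first flip cannot be bypassed.
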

As applications, this gives a new candidate for quantum higher Teichm\"uller theory, where the above unitary equivalence gives the \emph{quantum mutation operator} satisfying the pentagon relation (see \cite{FK, Ip14} for a review). It also provides a major step towards proving the tensor product decomposition of $\cP_\l$ in general. Together with the braiding by the universal $\cR$ operator, the positive representations will carry a (continuous) braided tensor category structure, which may give rise to new class of TQFT's in the sense of Reshetikhin-Turaev \cite{RT1, RT2}.

The construction of the unitary equivalence in \cite{Ip14} utilizes the language of \emph{multiplier Hopf algebra} and the GNS representations of $C^*$-algebra, which leads to the existence of a unitary operator $W$ called the \emph{multiplicative unitary} giving the desired intertwiner \eqref{decomp}. However, the construction of $W$ on the $C^*$-algebraic level is quite abstract and it is very difficult to write down explicitly the intertwiner as a product of quantum dilogarithm functions. We presented several examples in \cite{Ip14} for type $A_n$ and hint at a relationship to the Heisenberg double, but we were not able to generalize it to other types nor write down the formula in general. 

\subsection{Cluster realization of positive representations}
In this paper, we reprove Theorem \ref{mainthm} using a new technique which comes with the discovery of the cluster realization of quantum groups in \cite{SS16} for type $A_n$ and \cite{Ip16} for general types, where we found an embedding of the Drinfeld's double of the Borel part $\cD(\cU_q(\fb))\inj \cX_{D_{2,1}}$ into a quantum cluster algebra associated to a quiver $Q_{D_{2,1}}$ on the triangulation of a once punctured disk with two marked points. The quiver itself has a geometric meaning representing the Poisson structure of the moduli space of framed local systems for general groups \cite{FG2, Le}. A polarization of the quantum cluster variables, i.e. a choice of representations by the canonical variables on some Hilbert space $L^2(\R^N)$ through the exponential functions, recovers the positive representations $\cP_\l$ of $\cU_{q\til{q}}(\g_\R)$, which is the quotient of $\cD(\cU_q(\fb))$ by identifying the Cartan part $K_iK_i'=1$.

In particular, the generators of $\cU_q(\g)$ can be represented combinatorially using certain paths on $Q_{D_{2,1}}$ representing a telescoping sums of quantum cluster variables, while the coproduct is given by the quiver $Q_{D_{2,2}}$ on twice-punctured disk with two marked points, which is simply concatenating two copies of $Q_{D_{2,1}}$ along one edge. Furthermore, cluster mutations correspond to unitary equivalence of the representations $\cP_\l$. Finally, to each triangulation, we constructed explicitly in \cite{Ip16} using the universal $R$ matrix the sequence of cluster mutations $\mu$ realizing quiver mutations associated to the flip of triangulation as in Figure \ref{QQ}.

\begin{figure}[!htb]
\centering
\begin{tikzpicture}[scale=0.5, every node/.style={inner sep=0, minimum size=0.5cm, thick},x=0.3cm,y=0.3cm]
\begin{scope}[shift={(-14,0)}]
\draw (0,10)-- (-10,0)-- (0,-10)-- (10,0)--(0,10)--(0,-10);
\node (Q) at (-5,0) [minimum size=2cm]{ $Q$};
\node (Q) at (5,0) [minimum size=2cm]{ $Q'$};
\end{scope}
\begin{scope}[shift={(14,0)}]
\draw (10,0)-- (0,-10) -- (-10,0)--(0,10)--(10,0)--(-10, 0);
\node (Q) at (0,-5) [minimum size=2cm]{$Q$};
\node (Q) at (0,5) [minimum size=2cm]{$Q'$};
\end{scope}
\path (-2,0) edge[->, thick] (2,0);
\node at (0,1.5) {$\mu$};
\end{tikzpicture}
\caption{Flip of triangulation, with $Q,Q'$ the associated quivers.}\label{QQ}
\end{figure}
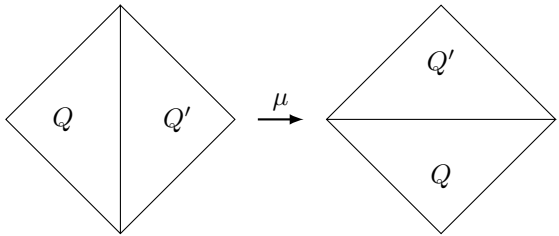

In this new language, it turns out that the restriction $\cP_\l^\fb$ can be easily describe by the so-called \emph{$F_i$-paths} as shown schematically in red below in Figure \ref{PP}. A flip of triangulation in this case creates a self-folded triangle, but since for the study of $\cP_\l^\fb$, the mutation sequence does not involve the gluing edge $\cE$ in the quiver, we can relax by un-gluing the edges $\cE$ and consider only a normal flip of triangulation as in Figure \ref{flip} of Section \ref{sec:flip}, where we prove the main result:

\begin{Thm} The sequence of cluster mutations realizing the flip of triangulation preserves the $F_i$-paths representing the restriction of positive representations to $\cU_{q\til{q}}(\fb_\R)$.
\end{Thm}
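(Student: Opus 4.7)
The plan is to decompose the mutation sequence $\mu$ realizing the flip of triangulation into its constituent elementary mutations $\mu_{k_1},\dots,\mu_{k_N}$, supplied by the explicit construction in \cite{Ip16} via the universal $\cR$-matrix, and then to track the effect of each step on the $F_i$-paths. Since an $F_i$-path on $Q$ encodes the generator $F_i$ as a telescoping sum of quantum cluster monomials along an oriented walk, it suffices to show that at each intermediate stage the current expression for $F_i$ is again a telescoping sum along a walk in the current quiver, and that after the final mutation it agrees with the $F_i$-path prescription on $Q'$.

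The central technical step is the local analysis at each mutable vertex. A single mutation $\mu_k$ has no effect on an $F_i$-path disjoint from vertex $k$, so only the case where $k$ lies on or immediately adjacent to the path requires work. Here one invokes the quantum mutation formulas together with the elementary $q$-binomial identity underlying the telescoping sums: moving a segment across the mutated vertex reshuffles the constituent cluster monomials but can be re-read as a new telescoping sum in the mutated chart, and, by carefully matching arrows of the quiver before and after $\mu_k$, the rewritten sum is verified to be an $F_i$-path in the new quiver. Iterating this check over all $N$ mutations produces the target $F_i$-path on $Q'$.

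The main obstacle is the combinatorial bookkeeping for general $\g$, since both $Q_{D_{2,1}}$ and the sequence $\mu$ depend heavily on the Cartan type and may be very long. My strategy is to reduce to rank-$2$ sub-Dynkin data: an $F_i$-path can interact non-trivially with the $F_j$-mutation data only when $a_{ij}\ne 0$, so only the rank-$2$ configurations $A_1\times A_1$, $A_2$, $B_2$ and $G_2$ need inspection. Once preservation is verified in each of these finitely many local models, the global statement follows because the mutation sequence from \cite{Ip16} decomposes along rank-$2$ blocks indexed by a reduced expression for the longest Weyl element, and any given $F_i$-path crosses each such block at most once. The technical heart of the argument is therefore the finite but intricate rank-$2$ verification, governed by the pentagon and braid relations for the quantum dilogarithm.
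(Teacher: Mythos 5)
Your route --- decomposing $\mu_{\cR_k}$ into elementary mutations and verifying a local statement at each step, reduced to rank-$2$ configurations --- is genuinely different from the paper's, and as formulated it has a gap. The paper never tracks the $F_i$-path through the intermediate quivers. It uses Lemma \ref{useful} to split $\mu_{\cR_k}^q=\Phi_k\circ M_{\cR_k}$ and then computes the adjoint action of the entire dilogarithm product $\Phi_1=g_{b_{i_1}}(\underline{\be}_{i_1}^-\ox\underline{\bf}^{1,-})\cdots g_{b_{i_N}}(\underline{\be}_{i_N}^-\ox\underline{\bf}^{N,-})$ on the concatenated path polynomial in one stroke (Proposition \ref{ii'}): by Corollary \ref{efcom} and the identity \eqref{g12}, the factor indexed by $k$ acts trivially unless $i_k=i$, in which case it produces the term $K_i'\ox\bf^{k,-}$; summing over the $k$ with $i_k=i$ recovers exactly the second half of the path, so conjugation removes it. The match with the $F_i$-path of the flipped quiver then comes from the monomial-transform half of Lemma \ref{useful} together with the observation that the mutated $F_i$-path is the unique path joining $\cF_{in}$ to $\cF_{out}$ in the bottom triangle.

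Concretely, the gaps in your reduction are these. First, which dilogarithm factors interact with the $F_i$-path is controlled by the Kronecker delta in $[\be_i^\pm,\bf_j^\mp]=\mp\d_{ij}(q_i-q_i\inv)K_i^{\pm1}$, not by $a_{ij}\neq 0$; every factor with $i_k=i$ contributes a term, and there are as many such factors as there are nodes of color $i$, so the rank-$2$ locality you invoke is not the relevant locality. Second, the mutation sequence does not decompose into rank-$2$ blocks each crossed at most once: already in type $B_3$ the sequence realizing $\cR_1$ consists of $35$ mutations with individual vertices mutated repeatedly, so ``iterate the local check'' requires controlling every intermediate quiver --- precisely the combinatorial bookkeeping the operator-level computation is designed to avoid. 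Third, you do not explain how the final telescoping sum is identified with the $F_i$-path prescription on the mutated quiver, which requires the uniqueness-of-path argument above. The local pictures you have in mind do appear in the paper (Figures \ref{type1} and \ref{type2}: single mutations in simply-laced configurations, triples $\mu_l\mu_k\mu_l$ in doubly-laced ones), but they are quoted from \cite{Ip16} as a description of the net effect of the mutation sequence, not used as the engine of the proof.
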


\begin{figure}[!htb]
\centering 
\begin{tikzpicture}[baseline=(a),x=0.7cm,y=0.7cm]
\draw [vthick] (0,0) circle (2);
\node (a) at (0,0) [draw, circle, minimum size=0.2, inner sep=1.5]{};
\node (b) at (0,2) [draw, circle, minimum size=0.2, inner sep=1.5, fill=black]{};
\node (c) at (0,-2) [draw, circle, minimum size=0.2, inner sep=1.5, fill=black]{};
\path (a) edge[-, thin] (b);
\path (a) edge[-, thin] (c);
\node at (0,-3){$\cP_\l^\fb$};
\node at (-2.7,-0.5)[red]{$\cF_{in}$};
\node at (2.7,-0.5)[red]{$\cF_{out}$};
\node at (-2,2){$\over[Q]$};
\node at (2,2){$Q$};
\node at (-0.2,1){$\cE$};
\draw[->,dashed,red, thick] (-2,0)--(0,-1);
\draw[->,dashed,red, thick] (0,-1)--(2,0);
\end{tikzpicture}
$\simeq_{\Phi_1}$
\begin{tikzpicture}[baseline=(a),x=0.7cm,y=0.7cm]
\draw [vthick] (0,0) circle (2);
\node (a) at (0,0) [draw, circle, minimum size=0.2, inner sep=1.5]{};
\node (b) at (0,2) [draw, circle, minimum size=0.2, inner sep=1.5, fill=black]{};
\node (c) at (0,-2) [draw, circle, minimum size=0.2, inner sep=1.5, fill=black]{};
\node at (0.3,0.7){$\l$};
\node at (0,-3){$\til{\cP^{\fb}}$};
\path (a) edge[-, thin] (b);
\node at (-2.7,-0.5)[red]{$\cF_{in}$};
\node at (2.7,-0.5)[red]{$\cF_{out}$};
\draw [thin](b) to [out=225, in=180](0,-0.7) to [out=0,in=-45](b);
\path (-2,0)edge[->, dashed,red, thick, bend right=60](2,0);
\end{tikzpicture}
\caption{Flipping to self-folded triangle, where the $F_i$-paths are preserved.}\label{PP}
\end{figure}
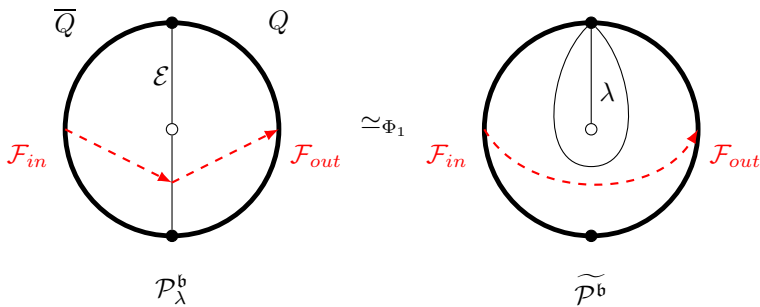

In particular, this shows that $\cP_\l^\fb$ is unitary equivalent to a representation $\til{\cP^\fb}$ representing the generators on the \emph{basic quiver} $Q_{D_{3,0}}$ associated only to a single triangle. In \cite{Ip14} we call this the \emph{standard form}, which is obtained by omitting half of the operators in the explicit representation of $\cP_\l^\fb$, and coincides with the so-called \emph{Feigin's homomorphism}.

Finally, the tensor product realized by concatenating two copies of $Q_{D_{2,1}}$, can be decomposed using three flips of triangles as shown schematically in Figure \ref{PPPM} (which is the same as the first three steps in the proof of tensor product decomposition of $\cP_\l$ for the whole quantum group in type $A_n$ constructed in \cite{SS17}). Since we found in \cite{Ip16} an explicit formula for the cluster mutations using a product of quantum dilogarithms, we \emph{completely solve the combinatorial problem} posed in \cite{Ip14} for explicitly writing down the tensor product decomposition of positive representations restricted to $\cU_{q\til{q}}(\fb_\R)$ of \emph{all types}, which cannot be done previously without the cluster realization. 

\begin{figure}[!htb]
\centering
\begin{tikzpicture}[baseline=(a),x=0.4cm,y=0.4cm]
\draw [vthick] (0,0) circle (2);
\node (a) at (-1,0) [draw, circle, minimum size=0.2, inner sep=1.5]{};
\node (d) at (1,0) [draw, circle, minimum size=0.2, inner sep=1.5]{};
\node (b) at (0,2) [draw, circle, minimum size=0.2, inner sep=1.5, fill=black]{};
\node (c) at (0,-2) [draw, circle, minimum size=0.2, inner sep=1.5, fill=black]{};
\path (a) edge[-, thin] (b);
\path (a) edge[-, thin] (c);
\path (b) edge[-, thin] (d);
\path (c) edge[-, thin] (d);
\path (b) edge[-, thin] (c);
\node at (0,-3){$\cP_\l^\fb\ox \cP_\mu^\fb$};
\node at (-3,-0.5)[red]{$\cF_{in}$};
\node at (3,-0.5)[red]{$\cF_{out}$};
\draw[->,dashed,red,thick] (-2,0)--(0,-1.3);
\draw[->,dashed,red, thick] (0,-1.3)--(2,0);
\end{tikzpicture}
$\simeq_{\Phi_1\ox \Phi_1}$
\begin{tikzpicture}[baseline=(a),x=0.4cm,y=0.4cm]
\draw [vthick] (0,0) circle (2);
\node (a) at (-1,0) [draw, circle, minimum size=0.2, inner sep=1.5]{};
\node (d) at (1,0) [draw, circle, minimum size=0.2, inner sep=1.5]{};
\node (b) at (0,2) [draw, circle, minimum size=0.2, inner sep=1.5, fill=black]{};
\node (c) at (0,-2) [draw, circle, minimum size=0.2, inner sep=1.5, fill=black]{};
\path (a) edge[-, thin] (b);
\path (b) edge[-, thin] (d);
\draw [thin](b) to [out=225, in=138](-1.25,-0.5) to [out=-45,in=-105](b);
\draw [thin](b) to [out=-75, in=225](1.25,-0.5) to [out=45,in=-38](b);
\path (b) edge[-, thin] (c);
\node at (0,-3){$\til{\cP^{\fb}}\ox \til{\cP^{\fb}}$};
\node at (-3,-0.5)[red]{$\cF_{in}$};
\node at (3,-0.5)[red]{$\cF_{out}$};
\path (-2,0)edge[->, dashed,red, thick, bend right=30](0,-1.5);
\path (0,-1.5)edge[->, dashed,red, thick, bend right=30](2,0);
\end{tikzpicture}
$\simeq_{\Phi_3}$
\begin{tikzpicture}[baseline=(a),x=0.4cm,y=0.4cm]
\draw [vthick] (0,0) circle (2);
\node (a) at (-1,0) [draw, circle, minimum size=0.2, inner sep=1.5]{};
\node (d) at (1,0) [draw, circle, minimum size=0.2, inner sep=1.5]{};
\node (b) at (0,2) [draw, circle, minimum size=0.2, inner sep=1.5, fill=black]{};
\node (c) at (0,-2) [draw, circle, minimum size=0.2, inner sep=1.5, fill=black]{};
\path (a) edge[-, thin] (b);
\path (b) edge[-, thin] (d);
\draw [thin](b) to [out=225, in=138](-1.25,-0.5) to [out=-45,in=-105](b);
\draw [thin](b) to [out=-75, in=225](1.25,-0.5) to [out=45,in=-38](b);
\draw [thin](b) to [out=205, in=90](-1.75,0) to [out=-90,in=180] (0,-1) to [out=0, in=-90](1.75,0)to [out=90, in=-25](b);
\node at (0,-3){$\til{\cP^{\fb}}\ox \cM$};
\node at (-3,-0.5)[red]{$\cF_{in}$};
\node at (3,-0.5)[red]{$\cF_{out}$};
\path (-2,-0.3)edge[-, dashed,red, thick, bend right=30](0,-1.5);
\path (0,-1.5)edge[->, dashed,red, thick, bend right=30](2,-0.3);
\end{tikzpicture}
\caption{Tensor product decomposition $\cP_\l^\fb\ox \cP_\l^\fb\simeq \til{\cP^\fb}\ox \cM$.}\label{PPPM}
\end{figure}

\subsection{Outline of the paper}
The paper is organized as follows. In Section \ref{sec:prelim}, we recall the positive representations $\cP_\l$ and some identities of the quantum dilogarithm function $g_b$. In Section \ref{sec:cluster}, we recall the definitions and results concerning quantum torus algebra $\cX^\bi$. In Section \ref{sec:basic}, we reconstruct the basic quiver $Q^{\bi}$ in \cite{Ip16} in the notation of the current paper, and in Section \ref{sec:realization} we recall the cluster realization of positive representations. Finally in Section \ref{sec:flip}, we prove the main result and in Section \ref{sec:ex} we give several examples to demonstrate the algorithms of the construction of the tensor product decomposition of positive representations restricted to the Borel part $\cU_{q\til{q}}(\fb_\R)$.

\section*{Acknowledgments}
I would like to thank Gus Schrader, Alexander Shapiro and Ian Le for valuable discussions and hospitality at University of Toronto and Perimeter Institute at which this work is inspired. This work is partly based on the talks given at CQIS-2017 held in Dubna, Russia, and the 2017 Autumn Meeting of the Mathematics Society of Japan. This work is supported by Top Global University Project, MEXT, Japan at Kyoto University, and JSPS KAKENHI Grant Numbers JP16K17571.

%==============================================================================

\section{Preliminaries}\label{sec:prelim} In this section, let us recall several notations and definitions that will be used throughout the paper. We will follow mostly the convention used in \cite{Ip14} and \cite{Ip16}. 
%=======================================================================
\subsection{Definition of the modular double $\cU_{q\til{q}}(\g_\R)$}\label{sec:prelim:Uqgr}
Let $\g$ be a simple Lie algebra over $\C$, $\cI=\{1,2,...,n\}$ denotes the set of nodes of the Dynkin diagram of $\g$ where $n=rank(\g)$. Let $\{\a_i\}_{i\in \cI}$ be the set of positive simple roots, and let $w_0\in W$ be the longest element of the Weyl group of $\g$, where $N:=l(w_0)$ is the length of the longest word. We call a sequence $$\bi=(i_1,...,i_N)\in\cI^N$$ a reduced word of $w_0$ if $w_0=s_{i_1}...s_{i_N}$ is a reduced expression, where $s_{i_k}$ are the simple reflections of the root space. We will denote the reversed word by $$\over[\bi]:=(i_N,...,i_1).$$

\begin{Def} \label{qi} Let $q$ be a formal parameter. Let $(-,-)$ be the $W$-invariant inner product of the root lattice, and we define 
$$a_{ij}:=\frac{2(\a_i,\a_j)}{(\a_i,\a_i)},$$
such that $A:=(a_{ij})$ is the \emph{Cartan matrix}. 

We normalize $(-,-)$ as follows: for $i\in\cI$, we define the \emph{multipliers} to be
\Eq{\bd_i:=\frac{1}{2}(\a_i,\a_i):=
\case{1&\mbox{$i$ is long root or in the simply-laced case,}\\
\frac{1}{2}&\mbox{$i$ is short root in type $B,C,F$,}\\
\frac{1}{3}&\mbox{$i$ is short root in type $G_2$,}}
}
where $(\a_i,\a_j)=-1$ when $i,j$ are adjacent in the Dynkin diagram, such that
$$\bd_ia_{ij}=\bd_ja_{ji}.$$ We then define $q_i:=q^{\bd_i}$, which we will also write as
\Eq{
q_l&:=q,\\ 
q_s&:=\case{q^{\frac12}&\mbox{$\g$ is of type $B_n, C_n, F_4$},\\q^{\frac13}&\mbox{$\g$ is of type $G_2$},}
}
for the $q$ parameters corresponding to long and short roots respectively.
\end{Def}
\begin{Def}\cite{D,J} 
The Drinfeld-Jimbo quantum group $\cU_q(\g_\R)$ is the Hopf algebra generated by $\{E_i,F_i,K_i^{\pm1}\}_{i\in \cI}$ over $\C$ subjected to the relations for $i,j\in \cI$:
\Eq{
K_iE_j=q_i^{a_{ij}}E_jK_i,\tab K_iF_j=q_i^{-a_{ij}}F_jK_i,\tab {[E_i,F_j]} = \d_{ij}\frac{K_i-K_i\inv}{q_i-q_i\inv},
}
together with the Serre relations for $i\neq j$:
\Eq{
\sum_{k=0}^{1-a_{ij}}(-1)^k\frac{[1-a_{ij}]_{q_i}!}{[1-a_{ij}-k]_{q_i}![k]_{q_i}!}X_i^{k}X_jX_i^{1-a_{ij}-k}&=0,\tab X=E,F,\label{SerreE}
}
where $[k]_q:=\frac{q^k-q^{-k}}{q-q\inv}$. 

The Hopf algebra structure of $\cU_q(\g)$ is given by 
\Eq{
\D(E_i)=&1\ox E_i+E_i\ox K_i,\\
\D(F_i)=&F_i\ox 1+K_i^{-1}\ox F_i,\\
\D(K_i)=&K_i\ox K_i.
}
We will not need the counit and antipode in this paper.
\end{Def}

In the split real case, it is required that $|q|=1$. Throughout the paper, we let 
\Eq{q:=e^{\pi \sqrt{-1} b^2}} with $0<b^2<1$ and $b^2\in\R\setminus\Q$. We also write $$q_i:=e^{\pi \sqrt{-1} b_i^2}$$ such that 
\Eq{
b_i:=\case{b_l:=b&\mbox{$\a_i$ is long root or $\g$ is simply-laced,}\\
b_s:=\sqrt{\bd_i}b&\mbox{$\a_i$ is short root.}
}
} 

We define $\cU_q(\g_\R)$ to be the real form of $\cU_q(\g)$ induced by the star structure
\Eq{E_i^*=E_i,\tab F_i^*=F_i,\tab K_i^*=K_i.}
Finally, from the results of \cite{Ip2,Ip3}, let $\til{q}:=e^{\pi \sqrt{-1} b_s^{-2}}$ and we define the modular double to be
\Eq{\cU_{q\til{q}}(\g_\R):=\cU_q(\g_\R)\ox \cU_{\til{q}}(\g_\R)&\tab \mbox{$\g$ is simply-laced,}\\
\cU_{q\til{q}}(\g_\R):=\cU_q(\g_\R)\ox \cU_{\til{q}}({}^L\g_\R)&\tab \mbox{otherwise,}}
where ${}^L\g_\R$ is the Langlands dual obtained by interchanging the long and short roots of $\g_\R$.

%=======================================================================
\subsection{Positive representations of $\cU_{q\til{q}}(\g_\R)$}\label{sec:prelim:pos}
In \cite{FI, Ip2,Ip3}, a special class of representations for $\cU_{q\til{q}}(\g_\R)$, called the positive representations, is defined. The generators of $\cU_{q\til{q}}(\g_\R)$ are realized by positive essentially self-adjoint operators on certain Hilbert space, and satisfy the \emph{transcendental relations} \eqref{transdef}. In particular the quantum group and its modular double counterpart are represented on the same Hilbert space, generalizing the situation of $\cU_{q\til{q}}(\sl(2,\R))$ introduced in \cite{Fa1, Fa2} and studied in \cite{PT2}. More precisely, 
\begin{Thm}\cite{FI, Ip2, Ip3} Define the rescaled generators to be
\Eq{\be_i:=2\sin(\pi b_i^2)E_i,\tab \bf_i:=2\sin(\pi b_i^2)F_i.\label{smallef}}
There exists a family of representations $\cP_{\l}$ of $\cU_{q\til{q}}(\g_\R)$ parametrized by the $\R_+$-span of the cone of positive weights $\l\in P_\R^+$, or equivalently by $\l\in \R_+^{rank(\g)}$, such that 
\begin{itemize}
\item The generators $\be_i,\bf_i,K_i$ are represented by positive essentially self-adjoint operators acting on $L^2(\R^{N})$ where $N=l(w_0)$.
\item Define the transcendental generators:
\Eq{\til{\be_i}:=\be_i^{\frac{1}{b_i^2}},\tab \til{\bf_i}:=\bf_i^{\frac{1}{b_i^2}},\tab \til{K_i}:=K_i^{\frac{1}{b_i^2}}.\label{transdef}}
\begin{itemize}
\item if $\g$ is simply-laced, the generators $\til{\be_i},\til{\bf_i},\til{K_i}$ are obtained by replacing $b$ with $b\inv$ in the representations of the generators $\be_i,\bf_i,K_i$. 
\item If $\g$ is non-simply-laced, then the generators $\til{E_i},\til{F_i},\til{K_i}$ with $\til{\be_i}:=2\sin(\pi b_i^{-2})\til{E_i}$ and $\til{\bf_i}:=2\sin(\pi b_i^{-2})\til{F_i}$ generate $\cU_{\til{q}}({}^L\g_\R)$.
\end{itemize}
\item The generators $\be_i,\bf_i,K_i$ and $\til{\be_i},\til{\bf_i},\til{K_i}$ commute weakly up to a sign.
\end{itemize}
\end{Thm}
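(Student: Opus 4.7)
The plan is to construct $\cP_\l$ explicitly on $L^2(\R^N)$ by extending the $\sl(2,\R)$ construction of \cite{PT2} along a reduced word for $w_0$. For the base case, one works on $L^2(\R)$ with a canonical pair $(p,x)$ satisfying the Heisenberg relation, realizes $K$ as an exponential in $x$ determined by the highest weight $\l$, and writes $\be,\bf$ as explicit positive sums of two exponentials of linear combinations of $p$ and $x$. Positivity and essential self-adjointness of each such sum are guaranteed by Schm\"udgen's classification of integrable representations of the quantum plane \cite{Ip1, Sch}, while the Drinfeld–Jimbo relations are verified directly via the Baker–Campbell–Hausdorff formula.

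For general $\g$, the plan is to fix a reduced word $\bi=(i_1,\ldots,i_N)$ of $w_0$, coordinatize $L^2(\R^N)$ by canonical pairs $(u_k,v_k)$, and first build the Borel part $\cU_{q\til q}(\fb_\R)$. Define each $\bf_i$ as a telescoping positive sum over those positions $k$ with $i_k=i$, the $k$-th summand being an exponential of a linear form in the $u_\bullet,v_\bullet$ dictated by the combinatorics of $\bi$, and define $K_i$ as an exponential of a linear combination of the $u_k$'s encoding $\l\in P_\R^+$ through the fundamental weights. The $E$-generators are then obtained by conjugating this Borel realization with an explicit product of quantum dilogarithm functions $g_b$, implementing the Lusztig braid action; since each $g_b$-conjugation is unitary, positivity and essential self-adjointness of the resulting $\be_i$ on the same Hilbert space $L^2(\R^N)$ are automatic.

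With the explicit formulas in hand, the Drinfeld–Jimbo relations reduce to a finite list of identities between positive exponentials. The Cartan commutations $K_i\be_j=q_i^{a_{ij}}\be_jK_i$ and the Chevalley pairing $[\be_i,\bf_j]=\d_{ij}(K_i-K_i\inv)/(q_i-q_i\inv)$ follow termwise from BCH, while the quantum Serre relations reduce, after cancellation, to the pentagon identity and its higher-rank analogues for $g_b$. The transcendental relations \eqref{transdef} and the modular double structure are then automatic: the substitution $b\mapsto b\inv$ in every exponential yields another family of positive self-adjoint operators obeying the relations of $\cU_{\til q}({}^L\g_\R)$, and weak commutativity up to a sign of the two families follows from the observation that $e^{2\pi bu}\,e^{2\pi b\inv u'}=\pm\,e^{2\pi b\inv u'}\,e^{2\pi bu}$ whenever $u,u'$ are self-adjoint linear forms in the $(u_k,v_k)$, as the central commutator always lies in $\pi\sqrt{-1}\Z$ under the chosen normalization.

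The principal obstacle is verifying the higher Serre relations for the non-simply-laced algebras: the requisite four-term $B_2,C_2$ and seven-term $G_2$ quantum dilogarithm identities are nontrivial and must be established by case analysis, as was done in \cite{Ip3}. A subsidiary technical point is guaranteeing a common dense core of analytic vectors on which all unbounded exponentials interact in the strong sense required for the algebraic relations to lift to genuine operator identities; this is handled by working on the Schwartz space $\cS(\R^N)$ and invoking the meromorphic and boundedness properties of $g_b$ on horizontal strips.
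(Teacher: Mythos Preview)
The paper does not prove this theorem; it is quoted from \cite{FI, Ip2, Ip3} and serves as background for the cluster-theoretic results that follow. There is therefore no proof in the present paper to compare your proposal against. What the paper does supply is the explicit operator formulas in the subsequent Theorem~\ref{FKaction} (also cited from \cite{Ip2,Ip3}), and your outline is broadly consistent with that construction: build $K_i$ and $\bf_i$ as exponentials of linear forms in the canonical variables along a reduced word $\bi$, obtain $\be_i$ by quantum-dilogarithm conjugation, and invoke the $b\mapsto b^{-1}$ symmetry for the modular double.

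A couple of points of detail in your sketch are worth flagging if you intend to flesh it out. First, in the actual construction of \cite{Ip2,Ip3} the $E_i$ generators are not obtained by conjugating the Borel realization by a single Lusztig braid, but rather one fixes $\be_{i_N}$ explicitly for the rightmost letter of $\bi$ (as in \eqref{EE}) and then conjugates to reach the other $\be_i$; the Serre relations are then checked rank-two at a time. Second, your claim that weak commutativity ``up to a sign'' follows because the central commutator lies in $\pi\sqrt{-1}\Z$ is the right idea but needs the specific integrality of the coefficients in the linear forms, which is exactly what the normalization of $\bd_i$ in Definition~\ref{qi} is arranged to guarantee; this should be stated rather than asserted. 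Otherwise your plan matches the strategy of the cited references.
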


Let $\cP_\l\simeq L^2(\R^N, du_1...du_N)$, and let $u_k, p_k:=\frac{1}{2\pi\sqrt{-1}}\del[,u_k]$ be the standard position and momentum operator respectively acting on $L^2(\R^N)$ as self-adjoint operators. Then the representations can be constructed explicitly as follows:
\begin{Thm}\label{FKaction}\cite{Ip2,Ip3} For a fixed reduced word $\bi=(i_1,...,i_N)$, the positive representation $\cP_\l^{\bi}$ parametrized by $\l=(\l_1,...,\l_n)\in\R_{\geq 0}^n$ is given by positive essentially self-adjoint operators
\Eq{
K_i&=\exp\left(-2\pi b_i\l_i-\pi \sum_{k=1}^N a_{i,i_k} b_{i_k}u_k\right),\label{KK}\\
\bf_i&=\bf_i^-+\bf_i^+\label{FF}\\
&:=\sum_{k:i_k=i}\bf^{k,-}+\sum_{k:i_k=i}\bf^{k,+},\nonumber
}
acting on $L^2(\R^N)$, where 
\Eq{\bf^{k,\pm} := \exp\left(\pm\left(\sum_{j=1}^{k-1} \pi b_{i_j}a_{i_j,i_k} u_j + \pi b_{i_k}u_k+2\pi b_{i_k}\l_{i_k}\right)+2\pi b_{i_k} p_k\right).}
The $E_i$ generators corresponding to the right most root $i_N$ of $\bi$ is given explicitly by
\Eq{\label{EE} 
\be_{i_N}&=\be_{i_N}^-+\be_{i_N}^+\\
&:=e^{\pi b_{i_N} (u_N-2p_N)}+e^{\pi b_{i_N}(-u_N-2p_N)},\nonumber
}
while for the other generators we have in general
\Eq{
\be_i=\be_i^-+\be_i^+,
}
where $\be_i^\pm$ can be obtained from conjugation by quantum dilogarithms of $\be_{i_N}^\pm$ above, and can be realized explicitly as the $E_i$-paths polynomials on the cluster realization \cite{Ip16} (cf. Section \ref{sec:realization}).
\end{Thm}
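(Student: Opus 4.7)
The plan is to verify in sequence that: (i) each of the proposed operators is positive and essentially self-adjoint on a common dense core of $L^2(\R^N)$; (ii) the Drinfeld-Jimbo relations of $\cU_q(\g_\R)$ hold; (iii) the transcendental duplicates $\til\be_i,\til\bf_i,\til K_i$ generate the appropriate modular dual $\cU_{\til q}({}^L\g_\R)$ and weakly commute with the originals up to sign. All three assertions reduce to Weyl-type computations in the canonical pairs $(u_k,p_k)$ with $[u_j,p_k]=\frac{\d_{jk}}{2\pi\sqrt{-1}}$, facilitated by the Baker-Campbell-Hausdorff identity $e^A e^B=e^{[A,B]}e^B e^A$ whenever $[A,B]$ is a scalar.

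For (i), each of $K_i$, $\bf^{k,\pm}$ and $\be_{i_N}^\pm$ has the form $e^{L}$ with $L$ a real-linear combination of the self-adjoint operators $u_k,p_k$ plus a real constant linear in $\l$; by the spectral theorem such an operator is strictly positive and essentially self-adjoint on the Schwartz space $\cS(\R^N)$. For the sums $\bf_i=\sum_k \bf^{k,\pm}$ and $\be_{i_N}=\be_{i_N}^++\be_{i_N}^-$, one checks directly from \eqref{FF} and \eqref{EE} that each pair of summands satisfies a $q$-Weyl commutation $XY=q_i^{c}YX$ with $c\in\{-2,-1,1,2\}$ read off from the symplectic pairing of their exponents in $u,p$; Schm\"udgen's theorem on integrable representations of the quantum plane \cite{Sch} then guarantees that a finite sum of strictly positive, pairwise $q$-commuting operators is again positive essentially self-adjoint on $\cS(\R^N)$.

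For (ii), conjugation by $K_i$ sends each $\bf^{k,\pm}$ to itself scaled by $q_i^{-a_{ij}}$, as read off from \eqref{KK}--\eqref{FF} using BCH; this yields $K_i\bf_j=q_i^{-a_{ij}}\bf_j K_i$, and the parallel identity for $\be_i$ follows because by \cite{Ip16} every general $\be_i$ is obtained from $\be_{i_N}$ by conjugation with quantum dilogarithm operators $g_b(\cdot)$, which intertwine Weyl commutations by a fixed scalar and hence preserve the $K_i$-grading. The commutator $[E_i,F_j]=\d_{ij}(K_i-K_i^{-1})/(q_i-q_i^{-1})$ is the substantive step: for $i\ne j$ all monomials in $\bf_j$ and $\be_i$ $q$-commute termwise, so their commutator vanishes; for $i=j$ one performs an explicit telescoping, pairing adjacent summands in \eqref{FF} with the polynomial expression for $\be_i$ so that all intermediate contributions cancel and only the boundary terms proportional to $K_i$ and $K_i^{-1}$ survive, matching the normalization \eqref{smallef}. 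The Serre relations reduce to the rank-$2$ subalgebras attached to pairs of adjacent Dynkin nodes; the relevant operators involve only a bounded set of $u_k,p_k$, so the Serre polynomial collapses to a finite $q$-binomial identity of the type verified in Lusztig's PBW construction of the positive part of $\cU_q(\g)$.

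For (iii), the transcendental generators defined in \eqref{transdef} are given by the positive functional calculus applied to $\be_i,\bf_i,K_i$; since each such operator is a sum of exponentials $e^{L}$, the $1/b_i^2$-power sends $b\mapsto b^{-1}$ in the linear coefficients, yielding the analogous Drinfeld-Jimbo relations for $\cU_{\til q}({}^L\g_\R)$, while weak commutativity up to sign follows from the basic Weyl identity $e^{2\pi b p}e^{2\pi b^{-1}u}=-e^{2\pi b^{-1}u}e^{2\pi b p}$ applied componentwise. The main obstacle I expect is the $i=j$ case of the $[E_i,F_i]$ relation: writing down the explicit polynomial form of $\be_i$ requires chasing $\be_{i_N}$ through successive quantum dilogarithm conjugations and matching the resulting telescoping sum against \eqref{FF}, a combinatorial book-keeping that is exactly what the cluster realization of \cite{Ip16} systematizes via the $E_i$- and $F_i$-paths and that makes the theorem valid uniformly in the reduced word $\bi$.
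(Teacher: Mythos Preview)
The paper does not prove this theorem. Theorem~\ref{FKaction} is stated in the preliminaries section as a citation from \cite{Ip2,Ip3}, and no proof is supplied here; it functions as imported background for the cluster-theoretic constructions that follow. There is therefore no ``paper's own proof'' against which to compare your proposal.

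That said, a few remarks on your sketch. Parts (i) and the $K_i$-conjugation in (ii) are essentially correct Weyl-algebra computations. However, two steps are underargued. First, in (ii) you write that for $i\neq j$ the monomials in $\be_i$ and $\bf_j$ ``$q$-commute termwise, so their commutator vanishes''; $q$-commutation does not imply vanishing commutator, and what is actually needed is that each pair of monomials genuinely commutes, which requires checking that the relevant symplectic pairings of exponents are zero --- a nontrivial combinatorial fact about the reduced word $\bi$. Second, and more seriously, in (iii) you assert that the $1/b_i^2$-power of a \emph{sum} of exponentials ``sends $b\mapsto b^{-1}$ in the linear coefficients''. This is false for a generic sum: $(e^{L_1}+e^{L_2})^{1/b^2}\neq e^{L_1/b^2}+e^{L_2/b^2}$ in general. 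The identity does hold here, but it is one of the deep points of the construction, relying on the $q$-binomial formula for Weyl pairs (equivalently, the self-duality of the quantum dilogarithm under $b\leftrightarrow b^{-1}$), and in the non-simply-laced case on the specific structure of the exponents. This is precisely what the references \cite{Ip2,Ip3} establish and cannot be dismissed as ``applied componentwise''.
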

\begin{Thm}
For any reduced words $\bi$ and $\bi'$, we have unitary equivalence 
\Eq{
\cP_\l\simeq \cP_\l^\bi\simeq \cP_\l^{\bi'}.
}
\end{Thm}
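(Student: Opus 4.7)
The plan is to reduce to a local statement via Matsumoto's theorem, and then verify unitary equivalence one braid move at a time.

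First I would invoke Matsumoto's theorem: any two reduced words $\bi, \bi'$ of the longest element $w_0 \in W$ are connected by a finite sequence of \emph{braid moves}, namely $s_is_j = s_js_i$ for $a_{ij}=0$, and the rank-two braid relation $\underbrace{s_is_js_i\cdots}_{m_{ij}} = \underbrace{s_js_is_j\cdots}_{m_{ij}}$ with $m_{ij} \in \{3,4,6\}$ for the various non-orthogonal pairs. Hence it suffices to exhibit a unitary operator $\Phi_{\bi,\bi'}\colon L^2(\R^N) \to L^2(\R^N)$ intertwining the explicit actions of \eqref{KK}, \eqref{FF}, \eqref{EE} for reduced words $\bi$ and $\bi'$ differing by exactly one such move.

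Next I would construct the intertwiner on a case-by-case basis, mirroring the local geometry of the subword that is being changed. For a commutation move $s_is_j = s_js_i$ with $a_{ij}=0$, the corresponding generators $\bf^{k,\pm}$ and $\bf^{k+1,\pm}$ in Theorem \ref{FKaction} involve the coordinates $u_k, u_{k+1}$ decoupled from one another, so the intertwiner is simply the coordinate transposition $u_k \leftrightarrow u_{k+1}$ (together with the corresponding $p_k \leftrightarrow p_{k+1}$), which is manifestly unitary on $L^2(\R^N)$. For a rank-two braid move, the intertwiner is built out of a product of quantum dilogarithm operators $g_b(\cdot)$ acting in the two-dimensional subspace of coordinates on which the two letters of the move act nontrivially; the effect of conjugation by $g_b$ on the positive canonical pairs is a \emph{quantum cluster mutation}, which is precisely the prescription given by the cluster realization recalled later in Section \ref{sec:realization}. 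Since all remaining coordinates $u_j$ (for $j$ outside the moved block) are untouched, it is enough to do the verification in the rank-two setting $\g \in \{A_1\times A_1, A_2, B_2, G_2\}$.

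With the intertwiner in hand, the verification consists in checking the three families of equalities
\Eqn{
\Phi_{\bi,\bi'}\, K_i^{\bi}\, \Phi_{\bi,\bi'}^{-1} = K_i^{\bi'},\qquad
\Phi_{\bi,\bi'}\, \bf_i^{\bi}\, \Phi_{\bi,\bi'}^{-1} = \bf_i^{\bi'},\qquad
\Phi_{\bi,\bi'}\, \be_i^{\bi}\, \Phi_{\bi,\bi'}^{-1} = \be_i^{\bi'},
}
for all $i \in \cI$. The $K_i$ relations are purely multiplicative and reduce to an affine shuffle of the linear form in the exponent of \eqref{KK}. The $\bf_i$ relations reduce to the quantum pentagon identity of $g_b$ in the $A_2$ case, and to its $B_2$ and $G_2$ analogues in the non-simply-laced cases. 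The $\be_i$ relations then follow automatically because the $\be_i$-operators were constructed in \cite{Ip2, Ip3} as conjugates of the canonical $\be_{i_N}^\pm$ by the very same quantum dilogarithms that implement the mutations, so the compatibility is built into the construction.

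The genuine obstacle is the $G_2$ braid move, which requires a six-term mutation identity among quantum dilogarithms with mixed quantum parameters $q_l$ and $q_s$; in the current setting this is already established in \cite{Ip3} (and is also reflected in the $G_2$ quiver of \cite{Ip16}), so one may simply invoke it. The unitarity of each $\Phi_{\bi,\bi'}$ is automatic since $g_b$ is a unitary function of a positive self-adjoint operator and permutations of coordinates are unitary on $L^2(\R^N)$. Composing these moves along any path connecting $\bi$ to $\bi'$ yields the desired unitary equivalence, and its conclusion $\cP_\l \simeq \cP_\l^\bi \simeq \cP_\l^{\bi'}$ is independent of the chosen path since all intertwining data are determined up to an overall unitary acting on the same $L^2(\R^N)$.
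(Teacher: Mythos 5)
This theorem is stated in the paper without proof --- it is recalled from \cite{Ip2,Ip3} --- and your overall strategy (Matsumoto's theorem, coordinate transpositions for commuting letters, quantum dilogarithm intertwiners for the rank-two braid relations) is indeed the route taken in those references. Two steps in your sketch, however, are genuine gaps rather than routine omissions. First, the locality reduction is not justified as written. A braid move of length $m_{ij}$ alters a block of $m_{ij}$ consecutive coordinates (three for $A_2$, four for $B_2$, six for $G_2$), not two; more importantly, the generators $\bf_l$ and $K_l$ for roots $l\notin\{i,j\}$ \emph{do} depend on the coordinates inside the moved block, through the linear forms $\sum_{j<k}\pi b_{i_j}a_{i_j,i_l}u_j$ in \eqref{FF} and $\sum_k a_{l,i_k}b_{i_k}u_k$ in \eqref{KK}. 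So ``all remaining coordinates are untouched'' does not reduce the problem to rank two: you must additionally verify that the intertwiner carries these linear forms in the block coordinates to the corresponding linear forms for $\bi'$ (equivalently, that the block monomials $\bf^{k,\pm}$ are permuted/recombined compatibly with the Weyl-group combinatorics of the move). This is true, but it is a separate check, not a consequence of locality.

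Second, dismissing the $\be_i$ relations as ``automatic'' skips the substantive part of the argument. The description of $\be_i$ for $i\neq i_N$ as a conjugate of $\be_{i_N}^\pm$ in \eqref{EE} presupposes a particular reduced word ending in $i$; showing that the $\bi$- and $\bi'$-constructions of $\be_i$ agree after conjugation by $\Phi_{\bi,\bi'}$ requires the same pentagon-type identities for $g_b$ (and their doubly-laced and $G_2$ analogues, cf.\ \eqref{guv} and \eqref{gdouble}) that you invoke for the $\bf_i$'s --- it is a computation of the same difficulty, not a formal consequence of the construction. With these two points supplied, your argument coincides with the standard proof.
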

In particular, for any root index $i\in\cI$, we can choose $\bi$ with $i_N=i$ in order to observe that
\begin{Cor}\label{efcom} For any $i=1,..., n$, we have
\Eq{
\bf_i^-\bf_i^+&=q_i^{-2}\bf_i^+\bf_i^-,\tab \be_i^-\be_i^+=q_i^{-2}\be_i^+\be_i^-,\\
\frac{[\be_i^\pm, \bf_j^\mp]}{q_i-q_i\inv} &= \mp\d_{ij} K_i^{\pm1},\\
\bf^{k,\pm}\bf^{l,\pm} &= q_{i_k}^{\pm a_{kl}} \bf^{l,\pm}\bf^{k,\pm},\tab 1\leq l<k\leq N.
}
\end{Cor}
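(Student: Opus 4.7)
The three claims are all instances of the Heisenberg-Weyl calculus: every operator in sight is an exponential of a linear combination of $u_j$ and $p_k$, and since $[u_j,p_k]=\frac{\sqrt{-1}}{2\pi}\d_{jk}$ is central, Baker-Campbell-Hausdorff collapses to the Weyl relation
\Eqn{e^X e^Y = e^{[X,Y]}\, e^Y e^X}
for any such linear $X,Y$. So the whole proof is bookkeeping of the exponents coming from Theorem \ref{FKaction}.

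For relation 3, I would fix $l<k$ and write down the exponent $X_k^{\pm}$ of $\bf^{k,\pm}$ and $X_l^{\pm}$ of $\bf^{l,\pm}$. Among all brackets $[u_\cdot, p_\cdot]$ only one survives, namely the $\pm\pi b_{i_l}a_{i_l,i_k}u_l$-piece sitting inside the $j$-sum of $X_k^{\pm}$ paired against the $2\pi b_{i_l}p_l$-piece of $X_l^{\pm}$. This gives $[X_k^{\pm},X_l^{\pm}]=\pm \pi\sqrt{-1}\,b_{i_l}^2 a_{i_l,i_k}$, which by the symmetrization $\bd_i a_{ij}=\bd_j a_{ji}$ combined with $b_i^2=\bd_i b^2$ equals $\pm\pi\sqrt{-1}\,b_{i_k}^2 a_{i_k,i_l}$. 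Exponentiating produces the advertised factor $q_{i_k}^{\pm a_{kl}}$.

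For relation 1, the same bookkeeping applied to $[X_k^-,X_l^+]$ with $i_k=i_l=i$ gives the value $-2\pi\sqrt{-1}\,b_i^2$ in all three sub-cases $l<k,\ l=k,\ l>k$ (the diagonal contribution coming from $a_{ii}=2$). Summing over $k,l$ with $i_k=i_l=i$ yields $\bf_i^-\bf_i^+ = q_i^{-2}\bf_i^+\bf_i^-$. For the $\be_i$ version, choose a reduced word with $i_N=i$ so that $\be_i^{\pm}$ reduces to the single exponentials in \eqref{EE} involving only $u_N,p_N$; the two-line BCH computation gives the same $q_i^{-2}$.

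For relation 2, keep the choice $i_N=i$. By inspection of the exponents, $\be_i^{\pm}$ commutes exactly with every $\bf^{k,\mp}$ for $k<N$, since such summands contain neither $u_N$ nor $p_N$. When $j=i$ the only non-trivial interaction is with $\bf^{N,\mp}$: a single Weyl computation returns $\be_i^{\pm}\bf^{N,\mp}=q_i^{\mp 2}\bf^{N,\mp}\be_i^{\pm}$, hence
\Eqn{[\be_i^{\pm},\bf^{N,\mp}] = (q_i^{\mp 2}-1)\,\bf^{N,\mp}\be_i^{\pm}.}
A second application of BCH — this time to combine the exponents of $\bf^{N,\mp}\be_i^{\pm}$, noting that the $p_N$-pieces cancel — recovers $K_i^{\pm 1}$ up to the scalar $q_i^{\mp 1}$, and the elementary identity $(q_i^{\mp 2}-1)/(q_i-q_i^{-1})=\mp q_i^{\mp 1}$ delivers the Chevalley relation. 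For $j\neq i$ every $\bf^{k,\mp}$ in $\bf_j^{\mp}$ has $k<N$, so the commutator vanishes, accounting for $\d_{ij}$. The main obstacle is purely notational: carefully tracking signs and distinguishing $b_{i_j}^2 a_{i_j,i_k}$ from $b_{i_k}^2 a_{i_k,i_j}$ via the Cartan symmetry; there is no conceptual difficulty beyond Weyl-Heisenberg calculus.
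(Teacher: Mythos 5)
Your proposal is correct and is essentially the paper's own (implicit) argument: the paper offers no separate proof of Corollary 3.6 beyond the remark preceding it, namely that one chooses a reduced word with $i_N=i$ so that $\be_i^{\pm}$ takes the explicit form \eqref{EE}, and then all three relations follow from the Weyl--Heisenberg relations applied to the explicit exponents of Theorem \ref{FKaction}, exactly as you carry out (with the symmetrization $\bd_i a_{ij}=\bd_j a_{ji}$ converting $b_{i_l}^2a_{i_l,i_k}$ into $b_{i_k}^2a_{i_k,i_l}$, and with unitary equivalence of the realizations for different reduced words transporting the relations to arbitrary $i$). The only point requiring care is the scalar bookkeeping in the Chevalley relation, where the BCH recombination must give $\bf^{N,\mp}\be_i^{\pm}=q_i^{\pm1}K_i^{\pm1}$ so that it cancels against $(q_i^{\mp2}-1)/(q_i-q_i^{-1})=\mp q_i^{\mp1}$; your phrasing is consistent with this.
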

\begin{Rem}
We see that, for example, the triple $\{\be_i^-, \bf_i^+, K_i\inv\}$ forms the commutation relation of the \emph{Heisenberg double} \cite{Ka1}, which will be needed to prove the tensor product decomposition of the positive representations restricted to the Borel part in Section \ref{sec:flip}, thus confirming the intuition discussed in \cite{Ip14}.
\end{Rem}
%=======================================================================
\subsection{Quantum dilogarithm identities}\label{sec:prelim:qdlog}
We recall the quantum dilogarithm identities needed in this paper. More details can be found in \cite{FKa, Ip16}. The non-compact quantum dilogarithm is a meromorphic function that can be represented as an integral expression:
\Eq{
g_b(x):=\exp\left(\frac{1}{4}\int_{\R+i0} \frac{x^{\frac{t}{ib}}}{\sinh(\pi bt)\sinh(\pi b\inv t)}\frac{dt}{t}\right),
}
such that by functional calculus, it is unitary when $x$ is positive self-adjoint.
\begin{Rem}
During formal algebraic manipulation, one may consider its compact version instead, which in terms of formal power series are related by
\Eq{
g_b(x)\sim \Psi^q(x)\inv,
}
where the Faddeev-Kashaev's quantum dilogarithm is
\Eq{
\Psi^q(x):=\prod_{r=0}^\oo (1+q^{2r+1}x)\inv = Exp_{q^{-2}}\left(\frac{u}{q-q\inv}\right),
}
with
\Eq{
Exp_q(x):=\sum_{k\geq 0} \frac{x^k}{(k)_q!},\tab (k)_q:=\frac{1-q^k}{1-q}.
}
\end{Rem}

We will only need the following identities in this paper.
\begin{Lem}[Quantum dilogarithm identities]
Let $u,v$ be positive self-adjoint variables. If $uv=q^2 vu$, then we have the quantum exponential relation:
\Eq{
g_b(u+v)&=g_b(u)g_b(v)\label{guv},
}
and the conjugation
\Eq{
g_b(v)ug_b(v)^*&=qvu+u,\label{gcon}\\
g_b(u)^*vg_b(u)&=v+qvu.\nonumber
}
We also have in the doubly-laced case ($\sqrt{2}b_s=b$)
\Eq{\label{gdouble}
g_{b_s}(u+v)=g_{b_s}(u)g_b(q\inv uv)g_{b_s}(v).
}
Let again $u,v$ be positive self-adjoint and define $$c:=\frac{[u,v]}{q-q\inv},$$ such that $uc=q^2cu$ and $cv=q^2vc$. Then we have the generalized equation:
\Eq{
g_b(v)u g_b^*(v)&=c+u,\label{g12}\\
g_b(u)^*v g_b(u)&=v+c,\nonumber
}
in which \eqref{gcon} is a special case. 
\end{Lem}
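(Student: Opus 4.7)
The plan is to derive all five identities from a single $q$-shift functional equation for $g_b$, which I would establish directly from the integral representation in the preliminaries. Specifically, contour-shifting the defining integral by $t\mapsto t+ib$ and collecting the residue at $t=0$ gives the scalar identity
\begin{equation*}
\frac{g_b(x)}{g_b(q^2 x)} = 1+qx,
\end{equation*}
which extends to positive self-adjoint $x$ via the Borel functional calculus.

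Granted the shift equation, the conjugation identities \eqref{gcon} are essentially formal. The Weyl relation $uv=q^2 vu$ implies $u\,f(v)\,u^{-1}=f(q^2 v)$, equivalently $f(v)\,u=u\,f(q^{-2}v)$, for any analytic $f$. Applied to $f=g_b$ this yields $g_b(v)\,u=u\,g_b(q^{-2}v)$. Using the shift equation in the form $g_b(q^{-2}v)\,g_b(v)^{-1}=1+q^{-1}v$, together with the fact that $g_b(v)$ is unitary so $g_b(v)^{*}=g_b(v)^{-1}$, one obtains
\begin{equation*}
g_b(v)\,u\,g_b(v)^{*} = u(1+q^{-1}v) = u+q^{-1}uv = u+qvu.
\end{equation*}
The companion half of \eqref{gcon} follows by swapping the roles of $u$ and $v$. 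The generalized equation \eqref{g12} reuses this derivation verbatim: the hypotheses $uc=q^2 cu$ and $cv=q^2 vc$ let $c$ play exactly the role played by $qvu$ in the Weyl case, so the same steps produce $u+c$ and $v+c$ in place of $u+qvu$ and $v+qvu$.

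For the exponential identity \eqref{guv} under $uv=q^2 vu$, I would realize $u,v$ in the canonical Weyl representation $u=e^{2\pi b x}$, $v=e^{2\pi b p}$ on $L^2(\R)$, where $u+v$ is essentially self-adjoint. One approach is to diagonalize $u+v$ explicitly via the Kashaev--Faddeev integral transform, reducing both sides of \eqref{guv} to multiplication operators whose equality follows from the scalar shift equation. Alternatively, using \eqref{gcon} already in hand, one verifies that $g_b(u)g_b(v)$ implements the same conjugation of the Weyl pair $(u,v)$ as $g_b(u+v)$, and then invokes Stone--von Neumann irreducibility together with a normalization check on the vacuum vector to pin down the equality up to a trivial phase.

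The doubly-laced identity \eqref{gdouble} then follows from a two-step application of \eqref{guv} exploiting $\sqrt{2}b_s=b$, hence $q_s^2=q$: the operator $w:=q^{-1}uv$ is positive self-adjoint and satisfies $uw=q^2 wu$ and $wv=q^2 vw$, so iterating the basic exponential identity at parameters $b$ and $b_s$ separates $g_{b_s}(u+v)$ into exactly three factors, the middle one being $g_b(w)$. The principal obstacle throughout is the analytic foundation, namely showing that $u+v$, $qvu+u$, and $q^{-1}uv$ are essentially self-adjoint on a common core so that the functional calculus is well-defined and the integral representation of $g_b$ tolerates the required contour shifts; once these standard analytic facts from the theory of the noncompact quantum dilogarithm are in place, the entire lemma reduces to direct algebraic manipulation of the scalar shift equation.
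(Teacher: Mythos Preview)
The paper does not prove this lemma: it is stated in the preliminaries with a pointer to \cite{FKa, Ip16} for details, so there is no in-paper argument to compare against. Your derivation of \eqref{gcon} from the scalar shift equation is correct and standard, and your outline for \eqref{guv} is one of the usual routes.

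Your treatment of the generalized identity \eqref{g12}, however, has a real gap. You claim that ``the same steps produce $u+c$ and $v+c$'' because $c$ satisfies the same $q$-commutation relations with $u$ and $v$ that $qvu$ does in the Weyl case. But the decisive step in your proof of \eqref{gcon} was $g_b(v)\,u = u\,g_b(q^{-2}v)$, which comes directly from the Weyl relation $uv=q^2vu$ between $u$ and $v$ themselves---\emph{not} from any property of the derived element $qvu$. In the setting of \eqref{g12} no Weyl relation between $u$ and $v$ is assumed, so you cannot slide $u$ past $g_b(v)$ by a simple $q$-shift, and the argument as written collapses. A correct proof needs an additional idea: for example, expand $[v,g_b(u)]$ term by term using $[v,u^k]=-q^{-1}(q^{2k}-1)\,c\,u^{k-1}$ (which follows from $uc=q^2cu$ alone), and check that the resulting recursion on the coefficients of $g_b$ is exactly the one forced by the shift equation $g_b(x)=(1+qx)g_b(q^2x)$.

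Your sketch for \eqref{gdouble} is similarly too loose. The exponential relation \eqref{guv} at parameter $b_s$ requires $uv=q_s^{2}vu=qvu$, whereas the hypothesis here is $uv=q^{2}vu=q_s^{4}vu$, so a naive ``two-step application of \eqref{guv}'' at parameters $b$ and $b_s$ does not literally apply. The three-factor splitting in \eqref{gdouble} is a genuinely separate identity for the doubly-laced parameter relation $b=\sqrt{2}\,b_s$ and needs its own derivation.
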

%================================================
\section{Quantum cluster algebra}\label{sec:cluster}
In this section, let us recall the notation of the quantum torus algebra used in this paper. We will follow mostly the notations used in \cite{Ip16} and \cite{SS16} but with some modifications.
%================================================
\subsection{Quantum torus algebra and quiver}\label{sec:cluster:torus}
\begin{Def}[Cluster seed]
A cluster seed is a datum $\bi=(I, I_0, B, D)$ where $I$ is a finite set, $I_0\subset I$ is a subset called the \emph{frozen subset}, $B=(b_{ij})_{i,j\in I}$ a skew-symmetrizable $\frac12\Z$-valued matrix called the \emph{exchange matrix}, and $D=diag(d_i)_{i\in I}$ is a diagonal $\Q$-matrix called the \emph{multiplier} such that $DB=-B^TD$ is skew-symmetric. 
\end{Def}
\begin{Nota} In the rest of the paper, we will write $\bi'=(I', I_0', B', D')$ and $\over[\bi] =(\over[I], \over[I_0], \over[B], \over[D])$ and the respective elements as $i'\in I', \over[i]\in\over[I]$ etc. for convenience.
\end{Nota}
\begin{Def}[Quantum torus algebra]
Let $q$ be a formal parameter. We define the \emph{quantum torus algebra} $\cX^{\bi}$ associated to a cluster seed $\bi$ to be an associative algebra over $\C[q^d]$, where $d=\min_{i\in I}(d_i)$, generated by $\{X_i\}_{i\in I}$ subject to the relations
\Eq{\label{XiXj}
X_iX_j=q^{-2w_{ij}}X_jX_i,\tab i,j\in I,
}
where 
\Eq{w_{ij}=d_ib_{ij}=-w_{ij}.} The generators $X_i\in \cX^{\bi}$ are called the \emph{quantum cluster variables}, and they are \emph{frozen} if $i\in I_0$. We denote by $\bT^\bi$ the non-commutative field of fraction of $\cX^\bi$.
\end{Def}

Alternatively, given $B$ and $D$ as above, let $\L_\bi$ be a lattice with basis $\{e_i\}_{i\in I}$, and define a skew symmetric $d\Z$-valued form on $\L_\bi$ by $(e_i, e_j):=w_{ij}$.
Then $\cX^\bi$ is generated by $\{X_{\l}\}_{\l\in \L_\bi}$ with $X_0:=1$ subject to the relations
\Eq{q^{(\l,\mu)}X_{\l}X_{\mu} = X_{\l+\mu}.}
\begin{Nota}
Under this realization, we shall write $X_i=X_{e_i}$, and define the notation
\Eq{X_{i_1,...,i_k}:=X_{e_{i_1}+...+e_{i_k}},}
or more generally for $n_1,...,n_k\in \Z$,
\Eq{X_{i_1^{n_1},...,i_k^{n_k}}:=X_{n_1e_{i_1}+...+n_ke_{i_k}}.}
\end{Nota}
Let $c_{ij}\in\half\Z$ for $i,j\in I$ be defined by $c_{ij}=\case{b_{ij}&\mbox{if $d_i=d_j,$}\\w_{ij}&\mbox{otherwise.}}$
\begin{Def}[Quiver associated to $\bi$]\label{quiver} We associate to each seed $\bi=(I,I_0,B,D)$ a quiver $Q^\bi$ with vertices labeled by $I$ and adjacency matrix $(c_{ij})_{i,j\in I}$. We call $i\in I$ a \emph{short (resp. long) node} if $q^{d_i}=q_s$ (resp. $q^{d_i}=q$). An arrow $i\to j$ represents the algebraic relation 
\Eq{X_iX_j=q_*^{-2}X_jX_i} where $q_*=q_s$ if both $i,j$ are short nodes, or $q_*=q$ otherwise.
\end{Def}
We will use squares to denote frozen nodes $i\in I_0$ and circles otherwise. We will also use dashed arrow if $c_{ij}=\half$, which only occurs between frozen nodes. 
We will represent the algebraic relations \eqref{XiXj} by thick or thin arrows (see Figure \ref{thick}) for display convenience (thickness is \emph{not} part of the data of the quiver). Thin arrows only occur in the non-simply-laced case between two short nodes.
   
\begin{figure}[H]
\centering
  \begin{tikzpicture}[every node/.style={inner sep=0, minimum size=0.5cm, thick}, x=1cm, y=1cm]
\node[draw,circle] (i1) at(0,4) {$i$};
  \node[draw,circle] (j1) at (3,4) {$j$};
   \draw[vthick, ->](i1) to (j1);
\node at (6,4) {$X_iX_j = q^{-2} X_j X_i$}; 
\node[draw] (i2) at(0,3) {$i$};
  \node[draw] (j2) at (3,3) {$j$};
   \draw[vthick, dashed, ->](i2) to (j2);
\node at (6,3) {$X_iX_j = q^{-1} X_j X_i$};
\node[draw,circle] (i3) at(0,2) {$i$};
  \node[draw,circle] (j3) at (3,2) {$j$};
   \draw[thin, ->](i3) to (j3);
\node at (6,2) {$X_iX_j = q_s^{-2} X_j X_i$};
  \end{tikzpicture}
  \caption{Arrows between nodes and their algebraic meaning.}\label{thick}
  \end{figure}
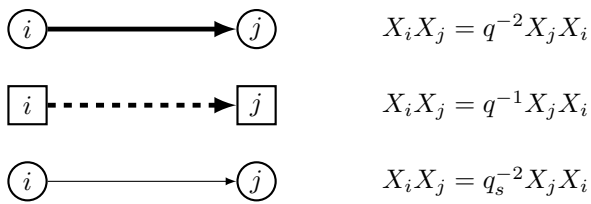
  
\begin{Def}\label{posrepX} A \emph{positive representation} of the quantum torus algebra $\cX^\bi$ on a Hilbert space $\cH=L^2(\R^M)$ is an assignment
\Eq{
X_i=e^{2\pi b L_i},\tab i\in I,
}
where $L_i:=L_i(u_k, p_k, \l_k)$ is a linear combination of the position and momentum operators  $\{u_k, p_k\}_{k=1}^M$ and complex parameters $\l_k$ with 
\Eq{[L_i, L_j]=\frac{w_{ij}}{2\pi\sqrt{-1}},} such that $X_i$ acts as a positive self-adjoint operator on $\cH$.
\end{Def}
%================================================
\subsection{Quantum cluster mutation}\label{sec:cluster:mutate}
Next we define the cluster mutations of a seed and its quiver, and the quantum cluster mutations for the algebra. Here we will use the notion that keeps the indexing $I$ of the seeds, which ensures the consistency of the relation $\mu_k^2=\mathrm{Id}$.
\begin{Def}[Cluster mutation] Given a pair of seeds $\bi, \bi'$ with $I=I', I_0=I_0'$, and an element $k\in I\setminus I_0$, a \emph{cluster mutation in direction $k$} is an isomorphism $\mu_k:\bi\to \bi'$ such that $\mu_k(i)=i$ for all $i\in I$, $d_i'=d_i$, and
\Eq{
b'_{ij} &= \case{-b_{ij}&\mbox{if $i=k$ or $j=k$},\\ b_{ij}+\frac{b_{ik}|b_{kj}|+|b_{ik}|b_{kj}}{2}&\mbox{otherwise}.}
}

Then the quiver mutation $Q^\bi\to Q^{\bi'}$ corresponding to the mutation $\mu_k$ can be performed by the well-known rule (with obvious generalization for dashed arrows)
\begin{itemize}
\item[(1)] reverse all the arrows incident to the vertex $k$;
\item[(2)] for each pair of arrows $i\to k$ and $k\to j$, add $n_{ij}^k$ arrows from $i\to j$, where $n_{ij}^k=d_k\inv$ if $k$ is a short node and both $i,j$ are long nodes, or $n_{ij}^k=1$ otherwise;
\item[(3)] delete any 2-cycles.
\end{itemize}
\end{Def}
\begin{Def}[Quantum cluster mutation]
The cluster mutation $\mu_k:\bi\to \bi'$, induces an isomorphism $\mu_k^q:\bT^{\bi'}\to \bT^{\bi}$ called the \emph{quantum cluster mutation}, which can be written as a composition of two homomorphisms
\Eq{\label{mudecomp}
\mu_k^q=\mu_k^\#\circ \mu_k',
}
where $\mu_k':\bT^{\bi'}\to \bT^\bi$ is a monomial transformation defined by
\Eq{
\mu_k'(\what{X}_i):=\case{X_k\inv&\mbox{if $i=k$},\\ X_i&\mbox{if $i\neq k$ and $b_{ki}\leq 0$},\\ q_i^{b_{ik}b_{ki}}X_iX_k^{b_{ki}}&\mbox{if $i\neq k$ and $b_{ki}\geq 0$},}
}
and $\mu_k^\#:\bT^\bi\to\bT^\bi$ is a conjugation by the \emph{quantum dilogarithm function}
\Eq{
\mu_k^\#:=Ad_{g_{b_k}^*(X_k)}, 
}
where $b_k=\sqrt{d_k}b$.
\end{Def}

It is also useful to recall the following lemma from \cite[Lemma 1.1]{SS16}:
\begin{Lem}\label{useful} Let $\mu_{i_1}, ... ,\mu_{i_k}$ be a sequence of mutation, and denote the intermediate seeds by $\bi_j:=\mu_{i_j}\cdots\mu_{i_1}(\bi)$. Then the induced quantum cluster mutation $\mu_{i_1}^q \cdots\mu_{i_k}^q: \bT^{\bi_k}\to \bT^{\bi}$ can be written as 
\Eq{
\mu_{i_1}^q \cdots\mu_{i_k}^q=\Phi_k\circ M_k,
}
where $M_k:\bT^{\bi_k}\to \bT^{\bi}$ and $\Phi_k:\bT^\bi\to\bT^\bi$ are given by
\Eq{
M_k&:= \mu_{i_1}'\mu_{i_2}'\cdots \mu_{i_k}',\\
\Phi_k&:= Ad_{g_{b_{i_1}}^*(X_{i_1})}Ad_{g_{b_{i_2}}^*(M_1(X_{i_2}^{(1)}))}\cdots Ad_{g_{b_{i_k}}^*(M_{k-1}(X_{i_k}^{(k-1)})},
}
and $X^{(j)}_i\in \cX^{\bi_j}$ denotes the corresponding quantum cluster variables of the algebra $\cX^{\bi_j}$. 
\end{Lem}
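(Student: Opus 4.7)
The plan is to prove the statement by induction on $k$, using the fact that algebra homomorphisms intertwine adjoint actions. The base case $k=1$ is literally the definition $\mu_{i_1}^q = \mu_{i_1}^\# \circ \mu_{i_1}' = Ad_{g_{b_{i_1}}^*(X_{i_1})} \circ \mu_{i_1}'$, which matches $\Phi_1 \circ M_1$.

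For the inductive step, suppose $\mu_{i_1}^q \cdots \mu_{i_{k-1}}^q = \Phi_{k-1} \circ M_{k-1}$. Appending the next quantum mutation gives
\Eqn{
\mu_{i_1}^q \cdots \mu_{i_{k-1}}^q \mu_{i_k}^q
= \Phi_{k-1} \circ M_{k-1} \circ Ad_{g_{b_{i_k}}^*(X_{i_k}^{(k-1)})} \circ \mu_{i_k}'.
}
The key observation is that if $M \colon \bT^{\bi'} \to \bT^\bi$ is any algebra isomorphism and $h$ is an element whose conjugation action is well-defined, then
\Eqn{
M \circ Ad_h = Ad_{M(h)} \circ M,
}
and moreover $M$ commutes with the functional calculus, so $M(g_b^*(X)) = g_b^*(M(X))$ (at the level of formal power series this is clear; at the operator level it follows by positivity of the monomial transformation applied to the positive self-adjoint cluster variable). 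Applying this with $M = M_{k-1}$ and $h = g_{b_{i_k}}^*(X_{i_k}^{(k-1)})$ yields
\Eqn{
M_{k-1} \circ Ad_{g_{b_{i_k}}^*(X_{i_k}^{(k-1)})} = Ad_{g_{b_{i_k}}^*(M_{k-1}(X_{i_k}^{(k-1)}))} \circ M_{k-1},
}
and substituting this back produces $\Phi_k \circ M_k$ as required, since $M_k = M_{k-1} \circ \mu_{i_k}'$ and $\Phi_k = \Phi_{k-1} \circ Ad_{g_{b_{i_k}}^*(M_{k-1}(X_{i_k}^{(k-1)}))}$.

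The main subtlety, rather than an obstacle, is to justify the interchange $M(g_b^*(X)) = g_b^*(M(X))$. Since each $\mu_{i_j}'$ is a monomial transformation sending positive cluster variables to positive Laurent monomials in the new cluster variables, $M_{k-1}(X_{i_k}^{(k-1)})$ is again a positive self-adjoint operator in a positive representation, so the quantum dilogarithm $g_b^*$ of it is defined via the same integral formula and the interchange is automatic. On the formal algebraic side, one may instead work with $\Psi^q$ as a power series in the single generator, where the interchange with an algebra homomorphism is manifest. Either viewpoint completes the induction.
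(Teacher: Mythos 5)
Your proof is correct: the induction with the base case $\mu_{i_1}^q=\mu_{i_1}^\#\circ\mu_{i_1}'$, the intertwining identity $M\circ Ad_h=Ad_{M(h)}\circ M$ for the isomorphism $M=M_{k-1}$, and the interchange $M(g_b^*(X))=g_b^*(M(X))$ (justified either formally via $\Psi^q$ or by functional calculus on positive self-adjoint operators) is exactly the standard argument for this statement. The paper itself gives no proof, simply recalling the lemma from \cite{SS16}, and your argument coincides with the one given there.
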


%================================================
\subsection{Amalgamation}\label{sec:cluster:amal}
Finally we briefly recall the procedure of \emph{amalgamation} of two quantum torus algebra \cite{FG2}:
\begin{Def}\label{amal}
Let $\bi, \bi'$ be two cluster seeds and let $\cX^{\bi}, \cX^{\bi'}$ be the corresponding quantum torus algebra. Let $J\subset I_0$ and $J'\subset I_0'$ be subsets of the frozen nodes with a bijection $\phi:J\to J'$ such that $$d_{\phi(i)}'=d_i,\tab i\in J.$$ Then the amalgamation of $\cX^{\bi}$ and $\cX^{\bi'}$ along $\phi$ is identified with the subalgebra $\til{\cX}\sub\cX^{\bi}\ox \cX^{\bi'}$ generated by the variables $\{\til{X}_i\}_{i\in I\cup I'}$ where 
\Eq{
\til{X}_i&:= \case{
X_i\ox 1&\mbox{if $i\in I\setminus J$},\\
1\ox X_i'&\mbox{if $i\in I'\setminus J'$},
}\\\nonumber
\til{X}_i= \til{X}_{\phi(i)} &:= X_i\ox X_{\phi(i)}'\tab i\in J.
}
\end{Def}

Equivalently, the amalgamation of the corresponding quivers $Q,Q'$ is a new quiver $\til{Q}$ constructed by gluing the vertices along $\phi$, defreezing those vertices that are glued, and removing any resulting 2-cycles.
%================================================
\section{The basic quivers}\label{sec:basic}
In \cite{Ip16}, we constructed the basic quiver $Q$ associated to a triangle such that the quantum group can be embedded into an amalgamation of $Q$ and its mirror image $\over[Q]$ (See Figure \ref{PP}). Let us recall its construction relevant to this paper, with a slightly different indexing.

First of all, following \cite{BFZ}, we define the notation $k^+$ and $k^-$ as follows
\begin{Nota}
Given a reduced word $\bi=(i_1,...,i_N)$, for $k\in\{1,...,N\}$, we denote by $k^+$ the smallest index $l>k$ such that $i_k = i_l$ if it exists, or $N+i_k$ otherwise.

Similarly, for $k\in \{1,...,N+n\}$, we denote by $k^-$ the largest index $l<k$ such that $l^+=k$ if it exists, or $0$ otherwise. Finally we define $i_{N+j} := j\in \cI$ for $j=1,...,n$.

We define the set of extremal indices by 
\Eq{\cF_{in}:=\{k: k^-=0\},\tab \cF_{out}:=\{N+1,..., N+n\},}
and let $k^*:=(N+k)^-$ be the largest index which is not extremal and such that $i_{k^*}=k$.
\end{Nota}

By abuse of notation we will also use $\bi$ to denote the cluster seed corresponding to the quiver $Q^{\bi}$, called the \emph{basic quiver}, described as follows.
\begin{Def}\label{basic}The \emph{basic quiver} $Q^\bi$ has nodes $I=\{1,...,N+2n\}$. It contains a subquiver $Q_F^\bi$ with $N+n$ nodes $\{1,2,...,N+n\}$ where the frozen nodes are $\cF_{in}\cup \cF_{out}$, such that for $k=1,...,N$:
\begin{itemize}
\item there is a single arrow $k\to k^+$,
\item there is a single arrow $l\to k$ if $l^-<k^-<l<k$,
\end{itemize}
and for the frozen nodes,
\begin{itemize}
\item there is a half arrow $k\DashedArrow[densely dashed    ]  l$ if $k,l\in \cF_{in}$ and $k>l$,
\item there is a half arrow $k\DashedArrow[densely dashed    ]  l$ if $k,l\in \cF_{out}$ and $k^->l^-$.
\end{itemize} 
The multiplier $D=(d_i)$ of the cluster seed $\bi$ is defined to be $d_k := \bd_{i_k}$.
\end{Def}
We attach the frozen nodes to two sides of the triangle, and we will usually display the arrows $k\to k^+$ in $Q_F^\bi$ in horizontal rows. The full quiver $Q^\bi$ is constructed by adding $n$ more frozen nodes $$\cE:=\{N+n+1,...,N+2n\}$$ attached to the third side with multiplier $d_{N+n+k}:=\bd_k$, and with additional arrows between them and $Q_F^\bi$, but in this paper we do not need them, so we will not review the construction. Alternatively, the subquiver $Q_F^\bi$ can also be constructed from building blocks called the \emph{elementary quivers}, see e.g. \cite{Ip16, Le}.

\begin{Ex} Our running example will be the quiver for type $A_3$ (Figure \ref{A3q}) and type $B_3$ (Figure \ref{B3q}). For completeness we will show the full quiver $Q^{\bi}$, but grayed out the part of the quiver outside $Q_F^\bi$ which is not needed in the construction of this paper. We highlighted the $F_i$-paths (cf. Definition \ref{Fipath}) in red.
\end{Ex}

\begin{Def} Given two basic quivers $Q^\bi, Q^{\bi'}$ associated to two reduced words $\bi,\bi'$, we denote by $Q^{\bi\bi'}$ the quiver obtained by amalgamation along $k\mapsto \phi(k)$ where $k\in \cF_{out}\sub Q^\bi$ and $\phi(k)\in \cF_{in}\sub Q^{\bi'}$ such that $i'_{\phi(k)}=i_k$. We denote by $$\cX^{\bi\bi'}\sub \cX^\bi\ox \cX^{\bi'}$$ the corresponding quantum torus algebra. Similarly, we let $Q_F^{\bi\bi'}$ be the subquiver obtained by amalgamating $Q_F^{\bi}$ and $Q_F^{\bi'}$ along the same map $\phi$, and let $$\cX_F^{\bi\bi'}\sub\cX^{\bi\bi'}$$ be the corresponding subalgebra.
\end{Def}

\begin{Def}\label{path} Given a path $\cP=(i_1,...,i_m)$ on the quiver $Q^\bi$ with $i_k\to i_{k+1}$ for every $k$, we write 
\Eq{X_{\cP} := X_{i_1,...,i_m}} for the product of all cluster variables along the path, and we define the \emph{path polynomial} (note that we ignore the last index) by
\Eq{X(\cP):=X(i_1,...,i_{m-1}) := \sum_{k=1}^{m-1} X_{i_1,...,i_k}\in \cX^{\bi}.}
 Given two paths $\cP, \cP'$ with $i_m=i_1'$, we simply denoted by $$\cP\cP'=(i_1,...,i_m=i_1',...,i'_{m'})$$ the path in $Q^{\bi\bi'}$ obtained by concatenating the two paths.
\end{Def}
\begin{Def}\label{Fipath} An \emph{$F_i$-path} is the path on the quiver $Q^\bi$ which starts from $\cF_{in}$ and ends at $\cF_{out}$ along the horizontal path on the quiver with nodes $\{k: i_k = i\}$ (see e.g. Figure \ref{AB3q}).
\end{Def}

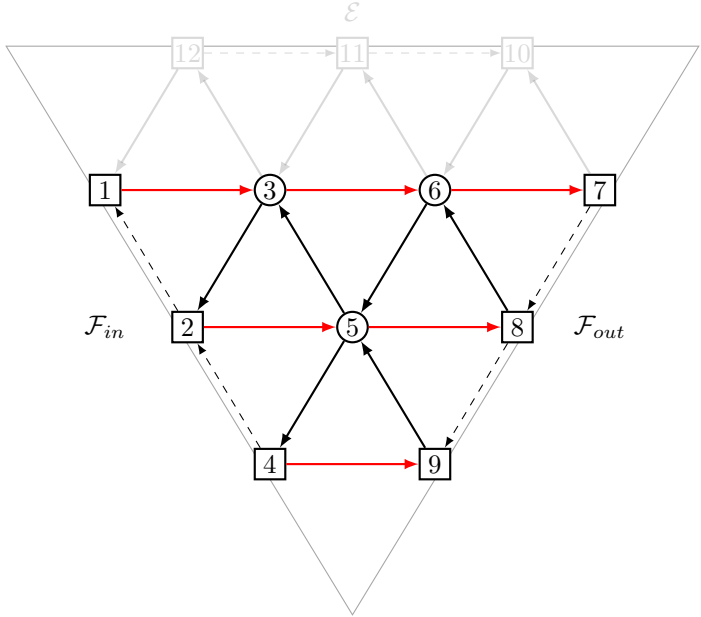
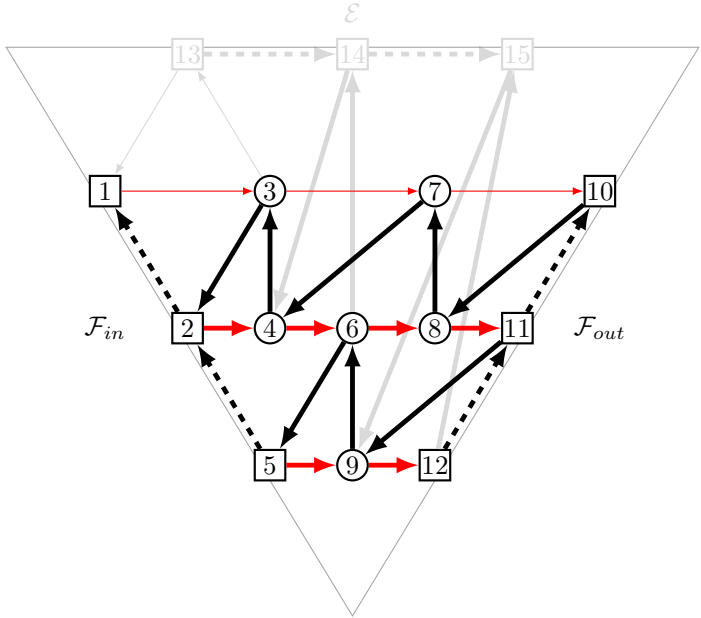
\begin{figure}[H]
\begin{subfigure}[H]{\textwidth}
\centering
\begin{tikzpicture}[scale=0.9, every node/.style={inner sep=0, minimum size=0.4cm, thick, fill=white, draw,circle}, x=1.2cm, y=1cm]
\draw [gray!70](-1.2,6.1)--(7.2,6.1)--(3,-2.2)--cycle;
\node (1) at (0,4)[rectangle]{$1$};
\node (2) at (1,2)[rectangle]{$2$};
\node (3) at (2,4){$3$};
\node (4) at (2,0)[rectangle]{$4$};
\node (5) at (3,2){$5$};
\node (6) at (4,4){$6$};
\node (7) at (6,4)[rectangle]{$7$};
\node (8) at (5,2)[rectangle]{$8$};
\node (9) at (4,0)[rectangle]{$9$};
\node (12) at (1,6)[rectangle,gray!30,fill=white]{12};
\node (11) at (3,6)[rectangle,gray!30,fill=white]{11};
\node (10) at (5,6)[rectangle,gray!30,fill=white]{10};
\node at (0,2)[draw=none]{$\cF_{in}$};
\node at (6,2)[draw=none]{$\cF_{out}$};
\node at (3,6.6)[draw=none, fill=none, gray!30]{$\cE$};
\drawpath{1,3,6,7}{red, thick}
\drawpath{2,5,8}{red, thick}
\drawpath{4,9}{red, thick}
\drawpath{8,6,5,3,2}{thick}
\drawpath{9,5,4}{thick}
\drawpath{4,2,1}{dashed}
\drawpath{7,8,9}{dashed}
\drawpath{7,10,6,11,3,12,1}{thick, gray, opacity=0.3}
\drawpath{12,11,10}{dashed,gray, opacity=0.3}
\end{tikzpicture}
\caption{Type $A_3$ with $\bi=(1,2,1,3,2,1)$}\label{A3q}
\end{subfigure}

 \begin{subfigure}[H]{\textwidth}
\centering
\begin{tikzpicture}[scale=0.9, every node/.style={inner sep=0, minimum size=0.4cm, thick, fill=white, draw,circle}, x=1.2cm, y=1cm]
\draw [gray!70](-1.2,6.1)--(7.2,6.1)--(3,-2.2)--cycle;
\node (1) at (0,4)[rectangle]{$1$};
\node (2) at (1,2)[rectangle]{$2$};
\node (3) at (2,4){$3$};
\node (4) at (2,2){$4$};
\node (5) at (2,0)[rectangle]{$5$};
\node (6) at (3,2){$6$};
\node (7) at (4,4)[]{$7$};
\node (8) at (4,2)[]{$8$};
\node (9) at (3,0)[]{$9$};
\node (10) at (6,4)[rectangle]{$10$};
\node (11) at (5,2)[rectangle]{$11$};
\node (12) at (4,0)[rectangle]{$12$};
\node (13) at (1,6)[rectangle,gray!30,fill=white]{13};
\node (14) at (3,6)[rectangle,gray!30,fill=white]{14};
\node (15) at (5,6)[rectangle,gray!30,fill=white]{15};
\node at (0,2)[draw=none]{$\cF_{in}$};
\node at (6,2)[draw=none]{$\cF_{out}$};
\node at (3,6.6)[draw=none, fill=none, gray!30]{$\cE$};
\drawpath{3,13,1}{thin, gray!30}
\drawpath{6,14,4}{vthick, gray!30}
\drawpath{12,15,9}{vthick, gray!30}
\drawpath{1,3,7,10}{red, thin}
\drawpath{10,8,7,4,3,2}{vthick}
\drawpath{2,4,6,8,11}{red,vthick}
\drawpath{11,9,6,5}{vthick}
\drawpath{5,9,12}{red,vthick}
\drawpath{5,2,1}{dashed, vthick}
\drawpath{12,11,10}{dashed, vthick}
\drawpath{13,14,15}{dashed,vthick, gray, opacity=0.3}
\end{tikzpicture}
\caption{Type $B_3$ with $\bi=(1,2,1,2,3,2,1,2,3)$}\label{B3q}
\end{subfigure}
\caption{The quiver $Q_F^\bi\subset Q^\bi$ attached to a triangle, and the $F_i$-paths highlighted in red.}\label{AB3q}
\end{figure}
%================================================
\section{Cluster realization of quantum groups}\label{sec:realization}
Taking $Q:=Q^{\bi}$ as the basic quiver, we have an embedding of $\cU_q(\g)$ into the quantum torus algebra $\cX^{\over[\bi]\bi}$ (where the subquiver $\over[Q]:=Q^{\over[\bi]}$ is a mirror image of $Q^{\bi}$). In particular, the restriction to the Borel part $\cU_{q\til{q}}(\fb_\R)$ generated by $\{F_i, K_i\inv\}_{i\in \cI}$ only requires the subalgebra $\cX_F^{\over[\bi]\bi}$. Following \cite{Ip16}, we will use
\Eq{
K_i':=K_i\inv
}
instead for typesetting purpose, which originally refers to the opposite Cartan generators of the Drinfeld's double.

Let $\bi=(i_1,...,i_N)$ and $\over[\bi]=(\over[i]_1,...,\over[i]_N):=(i_N,...,i_1)$. Let
$I\simeq \over[I]\simeq \{1,..., N+2n\}$ be the corresponding vertices of $Q^{\bi}, Q^{\over[\bi]}$, and by abuse of notation we write $$\over[i_k]:=\over[i]_{\over[k]}.$$ Let us define a bijection $\s:\over[I]\to I, \over[k]\mapsto l$ such that $\over[i_k]=i_l$ and 
\Eq{
\left|\{j:i_j=i_l, j<l\}\right| = \left|\{\over[j]: \over[i_j]=\over[i_k], \over[j]<\over[k]\}\right|,
}
i.e. the indices appear in the same horizontal position in the quiver $Q^\bi, Q^{\over[\bi]}$ from the left.

Recall the expression of the positive representation $\cP_\l$ given in Theorem \ref{FKaction}. If we redefine $$\over[\bf]^{\over[k],-} := \bf^{\s(k),-}$$ and rewrite the sum as
\Eq{
\bf_i&=\bf_i^-+\bf_i^+\\
&=\sum_{\over[k]:\over[i_k]=i}\over[\bf]^{\over[k],-}+\sum_{k:i_k=i} \bf^{k,+},\nonumber
}
then we observe that each monomial will $q_i^{-2}$ commute with all the terms to the right. 

Define the consecutive ratios of the $\bf^{k,\pm}$ monomials as
\Eq{\label{XfromF}
X_{\over[k]}&=\case{\over[\bf]^{\over[k],-}& \over[k]\in \over[\cF]_{in},\\
q_i\inv \over[\bf]^{\over[k],-}(\over[\bf]^{\over[k]^-,-})\inv& \over[k]\notin \over[\cF]_{in}\cup \over[\cF]_{out},}\\\label{XfromF2}
X_{k}&=\case{q_i\inv \bf^{k,+}(\over[\bf]^{\over[k]^*,-})\inv&k\in \cF_{in},\\
q_i\inv \bf^{k,+}(\bf^{k^-,+})\inv& k\notin \cF_{in}\cup \cF_{out},\\
q_i\inv K_{i_k}\inv (\bf^{k^*,+})\inv&k\in \cF_{out}.}
}

Let $\cP_\l^\fb$ be the positive representations $\cP_\l$ restricted to the Borel part $\cU_{q\til{q}}(\fb_\R)$. Then from the explicit expression \eqref{FF} of the operators and the commutation relations from Definition \ref{basic}, we have
\begin{Thm}\label{Fmain}\cite{Ip16} The assignment \eqref{XfromF}-\eqref{XfromF2} gives a positive representation of $\cX_F^{\over[\bi]\bi}$ generated by $\{X_{\over[k]},X_k\}_{k=1}^{N+n}$ in the sense of Definition \ref{posrepX}.

Equivalently, let $\cP_{F_i}, \over[\cP]_{F_i}$ be the $F_i$ paths of $Q^{\bi}$ and $Q^{\over[\bi]}$ respectively. Let
\Eq{
\bf_i&:=X(\over[\cP]_{F_i}\cP_{F_i})\in \cX_F^{\over[\bi]\bi},\\
K_i'&:=X_{\over[\cP]_{F_i}\cP_{F_i}}\in \cX_F^{\over[\bi]\bi}\nonumber
}
as in Definition \ref{path}. Then the assignment \eqref{XfromF}-\eqref{XfromF2} coincides with the positive representation $\cP_\l^\fb$ of the Borel part. We also have $$\bf_i^- = X(\over[\cP]_{F_i}).$$

For the tensor product $\cP_\l^\fb\ox \cP_\mu^\fb$, the coproduct $\D(\bf_i), \D(K_i')$ are represented on $\cX^{\over[\bi]\bi}\ox \cX^{\over[\bi]\bi}$ in a similar way by concatenation of the $F_i$-paths of two copies of the quiver $Q^{\over[\bi]\bi}, {Q'}^{\over[\bi]\bi}$ amalgamated along $\cF_{out}=\cF_{in}'$.
\end{Thm}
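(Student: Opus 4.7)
The plan is to verify the three parts of Theorem \ref{Fmain} in sequence: (i) that the assignment \eqref{XfromF}--\eqref{XfromF2} satisfies the quantum torus relations $X_iX_j=q^{-2w_{ij}}X_jX_i$ encoded by the quiver $Q_F^{\over[\bi]\bi}$ with the positive self-adjointness required by Definition \ref{posrepX}, (ii) that the path polynomials reassemble the generators $\bf_i$ and $K_i'$ of the Borel subalgebra, and (iii) that concatenation of $F_i$-paths realizes the coproduct on tensor products.

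For part (i), the crucial input is Corollary \ref{efcom}, which gives $\bf^{k,\pm}\bf^{l,\pm}=q_{i_k}^{\pm a_{kl}}\bf^{l,\pm}\bf^{k,\pm}$ for $l<k$, together with the analogous mixed relations between the barred monomials $\over[\bf]^{\over[k],-}$ and the $\bf^{l,+}$ across the amalgamation edge. Since each $X_\bullet$ in \eqref{XfromF}--\eqref{XfromF2} is a product/ratio of two consecutive monomials $\bf^{k,\pm}$, each of which is the exponential of a linear function in $u_j,p_j,\l_j$, the $X_\bullet$ are automatically of the form required by Definition \ref{posrepX}, and they commute according to some skew form $w_{ij}$. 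What I would do is compute $[\log X_i,\log X_j]$ case by case using the commutation rules of the $\bf^{k,\pm}$ and match the resulting integer (or half-integer on frozen pairs) against $w_{ij}=d_ib_{ij}$ dictated by Definition \ref{basic}: the four bullets defining the basic quiver (horizontal arrow $k\to k^+$, the arrow $l\to k$ whenever $l^-<k^-<l<k$, and the two dashed cases on $\cF_{in}$ and $\cF_{out}$) should correspond exactly to the non-trivial commutators produced by the ratio structure.

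For part (ii), the ratios in \eqref{XfromF}--\eqref{XfromF2} are tuned so that consecutive products \emph{telescope} along the $F_i$-path. Using the normalization $q^{(\l,\mu)}X_\l X_\mu=X_{\l+\mu}$ together with the identifications in \eqref{XfromF}, the $q$-powers picked up in $X_{\over[k_1],\ldots,\over[k_j]}$ precisely cancel the factor $q_i^{-1}$ in each consecutive ratio, yielding $X_{\over[k_1],\ldots,\over[k_j]}=\over[\bf]^{\over[k_j],-}$. Summing over the prefixes of the $F_i$-path on $\over[Q]$ therefore gives
\Eqn{
X(\over[\cP]_{F_i})=\sum_{\over[k]:\over[i_k]=i}\over[\bf]^{\over[k],-}=\bf_i^-,
}
and the analogous computation on $Q$ yields $X(\cP_{F_i})=\bf_i^+$. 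Concatenating along the amalgamated node in $\cF_{out}=\cF_{in}'$ then gives $X(\over[\cP]_{F_i}\cP_{F_i})=\bf_i^-+\bf_i^+=\bf_i$. The full monomial $X_{\over[\cP]_{F_i}\cP_{F_i}}$ telescopes completely: all intermediate $\bf^{\bullet,\pm}$ factors cancel and only the boundary contributions survive, which by \eqref{XfromF2} collapse to $K_i\inv=K_i'$.

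Part (iii) follows from the same telescoping, applied now in the amalgamated quiver $Q^{\over[\bi]\bi}\# Q'^{\over[\bi]\bi}$: an $F_i$-path that enters at $\cF_{in}$, crosses the shared boundary $\cF_{out}=\cF_{in}'$ and exits at $\cF_{out}'$ decomposes as $\over[\cP]_{F_i}\cP_{F_i}\over[\cP']_{F_i}\cP'_{F_i}$, and the telescoping splits the sum as $(\bf_i\ox 1)+(K_i'\ox \bf_i)$, reproducing $\D(F_i)=F_i\ox 1+K_i\inv\ox F_i$ in the rescaled variables \eqref{smallef}. The main obstacle is the bookkeeping in part (i): verifying that every arrow of $Q_F^{\over[\bi]\bi}$ in Definition \ref{basic} -- including the horizontal ones created by the interlacing condition $l^-<k^-<l<k$ and the dashed half-arrows among frozen nodes -- is reproduced with the correct sign and multiplicity by the commutators of the telescoping ratios. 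This is where one must be careful about the multipliers $d_i=\bd_{i_k}$ (to distinguish short/long nodes) and about the sign conventions for arrows across the amalgamation between $\over[Q]$ and $Q$.
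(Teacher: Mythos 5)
Your proposal is correct and follows essentially the same route as the paper, which itself only sketches this result (citing \cite{Ip16}) as a direct verification: the quiver relations of Definition \ref{basic} are matched against the commutators of the consecutive ratios $\bf^{k,\pm}(\bf^{k^-,\pm})^{-1}$, the path polynomials telescope back to $\bf_i^-+\bf_i^+=\bf_i$ and $X_{\over[\cP]_{F_i}\cP_{F_i}}=K_i'$, and the coproduct is recovered by the same telescoping across the amalgamated edge, splitting as $\bf_i\ox 1+K_i'\ox\bf_i$. The only caveat is that your part (i) remains an outline of the case-by-case check rather than the check itself, but that is exactly the level of detail at which the paper (deferring to \cite{Ip16}) treats it.
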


On the other hand, we observe in \cite{Ip14} that the Borel part can alternatively be represented using just half of the sum given by
\Eq{\label{feigin}\bf_i=\sum_{k:i_k=i}\bf^{k,+},}
 which coincides with the so-called \emph{Feigin's homomorphism}. Let us call this representation $\til{\cP_\l^\fb}$ (which we called it the \emph{standard form} in \cite{Ip14}).
\begin{Lem}\label{indep} $\til{\cP_\l^\fb}\simeq\til{\cP^\fb}$ is independent of the parameter $\l$.
\end{Lem}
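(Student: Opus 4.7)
The strategy is to construct an explicit $\l$-dependent unitary Weyl translation on $L^2(\R^N)$ that conjugates $\til{\cP_\l^\fb}$ to its $\l=0$ specialization, thereby giving the required unitary equivalence.

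First I would observe that the scalar $2\pi b_{i_k}\l_{i_k}$ enters the exponent of $\bf^{k,+}$ purely additively, and so factors out: $\bf^{k,+}=e^{2\pi b_{i_k}\l_{i_k}}\bf^{k,+}_0$, where $\bf^{k,+}_0$ denotes the specialization at $\l=0$. Summing over $k$ with $i_k=i$ yields $\bf_i=e^{2\pi b_i\l_i}\bf_i^{(0)}$ in the Feigin form \eqref{feigin}, and directly from \eqref{KK} we have $K_i=e^{-2\pi b_i\l_i}K_i^{(0)}$. Thus $\til{\cP_\l^\fb}$ differs from $\til{\cP_0^\fb}$ only by the scalar twist $F_i\mapsto e^{2\pi b_i\l_i}F_i,\,K_i\mapsto e^{-2\pi b_i\l_i}K_i$, which manifestly preserves every defining relation of $\cU_{q\til{q}}(\fb_\R)$.

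Next, to realize this twist as inner conjugation on $L^2(\R^N)$, I would introduce the Weyl translation
\Eqn{
U(\gamma,\delta):=\exp\left(-2\pi i\sum_{k=1}^N(\gamma_k p_k+\delta_k u_k)\right),
}
which implements the phase-space shift $(u_k,p_k)\mapsto(u_k+\gamma_k,p_k+\delta_k)$ up to BCH. Conjugation by $U$ merely adds constants to the linear exponents of $\bf^{k,+}$ and $K_i$, so the requirements
\Eqn{
U\bf^{k,+}U^{-1}=\bf^{k,+}_0\;\;(k=1,\ldots,N),\qquad UK_iU^{-1}=K_i^{(0)}\;\;(i\in\cI)
}
reduce to two explicit systems of real linear equations. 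The system \textbf{(B)} coming from $K_i$ involves only the $\gamma_k$'s: after setting $C_{i'}:=\sum_{k:i_k=i'}\gamma_k$, it reads $\sum_{i'\in\cI}a_{i,i'}b_{i'}C_{i'}\propto b_i\l_i$. The system \textbf{(A)} coming from $\bf^{k,+}$ couples $\gamma_j\,(j\leq k)$ and $\delta_k$ in a lower-triangular fashion, with $\l_{i_k}$ on the right-hand side.

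The main (and only nontrivial) step is the solvability of (B), which is immediate from invertibility of the Cartan matrix $A=(a_{i,i'})$: it uniquely determines each $C_{i'}$, and any choice distributing $C_{i'}$ among the $\gamma_k$'s with $i_k=i'$ yields a valid $\gamma$. Then system (A) recursively determines each $\delta_k$ as a linear function of $\gamma_1,\ldots,\gamma_k$ and $\l_{i_k}$. The resulting unitary $U(\gamma,\delta)$ intertwines $\til{\cP_\l^\fb}$ with the $\l$-independent representation $\til{\cP^\fb}:=\til{\cP_0^\fb}$, completing the proof.
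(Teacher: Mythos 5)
Your proposal is correct and is essentially the paper's own argument: both proofs use invertibility of the Cartan matrix to choose position shifts $u_k\mapsto u_k+\gamma_k$ killing $\l$ in the $K_i$, and then exploit the fact that each monomial $\bf^{k,+}$ contains the single momentum $p_k$ to remove the leftover constants by a triangular system of momentum shifts. The only cosmetic differences are that you package the two shifts into one Weyl operator and allow the $u$-shift to be distributed over all $k$ with $i_k=i'$, whereas the paper supports it only on $k\in\cF_{in}$.
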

\begin{proof}The proof is similar but easier than the one in \cite{Ip14}. From the explicit expression for $K_i$, since the Cartan matrix $A=(a_{ij})$ is invertible, we can always make a shift of variables $$u_k\mapsto u_k-c_k$$ for some constant $c_k$, where $k\in \cF_{in}$, such that the parameter $\l_i$ vanishes in the expression of $K_i$, while each monomial term $\bf^{k,+}$ picks up some new parameters $c_k'$. Now since each $\bf^{k,+}$ has a unique momentum operator $2p_k$, by the unitary transformation given by multiplication by $e^{\pi i u_k c_k'}$ for some constant $c_k'$, which acts as $$2p_k\mapsto 2p_k-c_k',$$ we can make the resulting parameters $c_k'$ vanish.
\end{proof}

Modifying \eqref{XfromF2} slightly with
\Eq{
\label{XfromF3}
X_{k}&=\case{\bf^{k,+}&k\in \cF_{in},\\
q_i\inv \bf^{k,+}(\bf^{k^-,+})\inv& k\notin \cF_{in}\cup \cF_{out},\\
q_i\inv K_i\inv (\bf^{k^*,+})\inv&k\in \cF_{out},}
}
we conclude that
\begin{Thm} The assignment \eqref{XfromF3} gives a positive representation of the quantum torus algebra $\cX_F^\bi\subset \cX^\bi$ associated to $Q_F^{\bi}$. Equivalently, let
\Eq{\bf_i&:=X(\cP_{F_i})\in \cX_F^\bi,\\
 K_i'&:= X_{\cP_{F_i}}\in \cX_F^\bi,\nonumber
 }
 then the assignment \eqref{XfromF3} gives the representation $\til{\cP^\fb}$ of $\cU_{q\til{q}}(\fb_\R)$.
\end{Thm}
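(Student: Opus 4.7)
The plan is to adapt the argument for Theorem \ref{Fmain} to the simplified setting of the single basic quiver $Q_F^\bi$, dropping the mirror contribution $Q^{\over[\bi]}$ throughout. The argument has three steps: positivity of the $X_k$, verification of the quiver commutation relations, and a telescoping identity identifying the path polynomial with the Feigin-type sum \eqref{feigin}. Positivity is immediate: each $\bf^{k,+}$ and each $K_i$ is an exponential of a real linear combination of the canonical operators $u_j, p_j$ and the parameters $\l_i$ by Theorem \ref{FKaction} and \eqref{KK}, so the ratios defining $X_k$ in \eqref{XfromF3} remain positive essentially self-adjoint on $L^2(\R^N)$ in the sense of Definition \ref{posrepX}.

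For the quiver commutation relations $X_k X_l = q^{-2w_{kl}}X_l X_k$ dictated by $Q_F^\bi$, I would perform a case analysis on whether $k, l$ lie in $\cF_{in}$, $\cF_{out}$, or neither, and use Corollary \ref{efcom} together with the Cartan commutation of $K_i$ with the $\bf^{k,+}$. In each case, the $(\bf^{k^-,+})\inv$ factors from the consecutive ratios in \eqref{XfromF3} telescope, and the surviving $q$-power matches the prescribed adjacency in Definition \ref{basic}: the horizontal arrow $k \to k^+$, the skew arrow $l \to k$ when $l^- < k^- < l < k$, or the dashed half-arrows among the boundary vertices $\cF_{in}\cup \cF_{out}$.

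Finally, writing the $F_i$-path in increasing order as $k_1 < k_2 < \ldots < k_m$ with $k_1 \in \cF_{in}$, $k_m \in \cF_{out}$, and $k_j^- = k_{j-1}$ for intermediate $j$, each partial product $X_{k_1, \ldots, k_j}$ collapses to $\bf^{k_j, +}$ by cancellation of the $(\bf^{k_{j-1}, +})\inv$ factors, yielding
\Eqn{X(\cP_{F_i}) = \sum_{j = 1}^{m-1}\bf^{k_j, +} = \sum_{k: i_k = i}\bf^{k, +},}
which is precisely the Feigin-type formula \eqref{feigin} for $\bf_i$; the full monomial $X_{\cP_{F_i}}$ telescopes analogously to $K_i\inv = K_i'$. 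Since \eqref{feigin} is known to define the representation $\til{\cP^\fb}$ and by Lemma \ref{indep} it does not depend on $\l$, we recover the desired representation of $\cU_{q\til{q}}(\fb_\R)$. The main obstacle will be the $q$-bookkeeping in this step: the $q_i\inv$ prefactors in \eqref{XfromF3}, the exponents $q_{i_k}^{\pm a_{kl}}$ from Corollary \ref{efcom}, and the $q^{w_{kl}}$ coming from reordering $X_{k_1}\cdots X_{k_j}$ into $X_{k_1, \ldots, k_j}$ must combine to unity at every stage, which is a consequence of the Cartan symmetrization $\bd_i a_{ij} = \bd_j a_{ji}$ encoded in the multiplier $d_k = \bd_{i_k}$ of the basic quiver.
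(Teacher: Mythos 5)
Your proposal is correct and follows essentially the same route as the paper, which states this theorem without a separate proof as the restriction of Theorem \ref{Fmain} to the single quiver $Q_F^{\bi}$ (dropping the $Q^{\over[\bi]}$ half), relying on the commutation relations of the monomials $\bf^{k,+}$ from Corollary \ref{efcom}, the telescoping of the consecutive ratios back to the Feigin-type sum \eqref{feigin}, and Lemma \ref{indep}; your three steps are precisely the verification the paper leaves implicit (deferring to \cite{Ip16} and \cite{Ip14}). The $q$-power bookkeeping you flag is indeed the only delicate point, and it resolves exactly via the symmetrization $\bd_i a_{ij}=\bd_j a_{ji}$ built into the multipliers $d_k=\bd_{i_k}$, as in the paper's $A_1$ example.
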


We also have the notion of an \emph{$E_i$-path}. Consider the quiver $Q_{D_{2,1}}^{\over[\bi]\bi}$ associated to a once punctured disk with two marked points, obtained by gluing $Q^{\over[\bi]}, Q^{\bi}$ along $\over[\cF]_{out}=\cF_{in}$ as well as $\over[\cE]=\cE$ (see Figure \ref{PP}). Recall that due to Theorem \ref{FKaction}, the generators $\be_i$ can be represented as a sum of monomials
$$\be_i=\be_i^-+\be_i^+.$$
In particular, if $\bi$ is well-chosen, $\be_i$ can be represented again by a path polynomial (with slight modification in type $C_n, E_8, F_4$ and $G_2$, see \cite{Ip16}) of the form
\Eq{
\be_i=X(\cP_{E_i}\over[\cP]_{E_i})\in \cX_{D_{2,1}}^{\over[\bi]\bi},
}
where $\cP_{E_i}$ and $\over[\cP]_{E_i}$ are respectively a path on the $Q^{\bi}, Q^{\over[\bi]}$ quivers, called the \emph{$E_i$-paths}. The path $\cP_{E_i}$ starts at $\cF_{out}$ and ends at $\cE$, while $\over[\cP]_{E_i}$ starts at $\over[\cE]$ and ends at $\over[\cF]_{in}$. Similarly we have $$\be_i^-=X(\cP_{E_i}).$$
%================================================
\section{Main results}\label{sec:flip}
We are now ready to give an alternative proof of the main theorem in \cite{Ip14}.
\begin{Thm}\label{main}
We have the following unitary equivalences of positive representations restricted to the Borel part $\cU_{q\til{q}}(\fb_\R)$:
\Eq{
\cP_\l^\fb&\simeq \til{\cP^\fb},\\
\til{\cP^\fb}\ox \til{\cP^\fb}&\simeq \til{\cP^\fb}\ox \cM,
}
for some multiplicity module $\cM\simeq L^2(\R^N)$ where $\cU_{q\til{q}}(\fb_\R)$ acts trivially. This means that the positive representations restricted to $\cU_{q\til{q}}(\fb_\R)$ is closed under taking tensor product.
\end{Thm}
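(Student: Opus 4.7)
The plan is to prove both equivalences by sequences of quantum cluster mutations corresponding to flips of triangulations, invoking the main flip theorem stated in the introduction that such mutation sequences preserve the $F_i$-paths on the quiver.

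For the first equivalence, I would realize $\cP_\l^\fb$ on the quantum torus algebra $\cX_F^{\over[\bi]\bi}$ associated to the amalgamated quiver $Q_F^{\over[\bi]\bi}$ as in Theorem \ref{Fmain}, which geometrically corresponds to two triangles glued along one edge, and then apply the explicit sequence of quantum cluster mutations $\Phi_1$ realizing the flip of triangulation, constructed in \cite{Ip16}. The key observation is that this mutation sequence does not touch the gluing edge $\cE$, so one may ignore the $\cE$-identifications entirely and treat the procedure as an ordinary flip producing a self-folded triangle. By the main flip theorem, the $F_i$-paths that initially traverse both triangles are preserved and can be represented as paths lying entirely within a single (unfolded) basic quiver $Q^\bi$. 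A direct comparison with \eqref{XfromF3} shows that this is precisely the Feigin-homomorphism representation $\til{\cP_\l^\fb}$, which by Lemma \ref{indep} is unitarily equivalent to $\til{\cP^\fb}$ independently of $\l$.

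For the second equivalence, I would represent $\cP_\l^\fb \ox \cP_\mu^\fb$ on the amalgamation of two copies of $Q^{\over[\bi]\bi}$ along $\cF_{out} = \cF_{in}'$ (a quiver on a twice-punctured disk) using the concatenation description of the coproduct from Theorem \ref{Fmain}. Applying $\Phi_1 \ox \Phi_1$ first reduces this to $\til{\cP^\fb} \ox \til{\cP^\fb}$ on a pair of self-folded triangles; a third flip $\Phi_3$ operating on the shared central edge then decouples the resulting quiver into two parts: a subquiver carrying the residual $F_i$-paths from $\cF_{in}$ to $\cF_{out}$ that realizes $\til{\cP^\fb}$, and a complementary subquiver whose cluster variables commute with the former and on which the Borel generators act trivially. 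The latter furnishes the multiplicity module $\cM \simeq L^2(\R^N)$. The Heisenberg-double relation from Corollary \ref{efcom} (already flagged in the remark following that corollary) is the algebraic mechanism that guarantees that the decoupled factor is a trivial $\cU_{q\til{q}}(\fb_\R)$-module.

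The main technical obstacle is verifying that the sequences $\Phi_1$ and $\Phi_3$ preserve the $F_i$-paths uniformly across all Lie types; this amounts to tracking how each elementary mutation modifies the telescoping sum \eqref{XfromF3} and matching it against the path polynomial on the post-flip quiver, which is the content of Section \ref{sec:flip}. A secondary obstacle is exhibiting the explicit decoupling after $\Phi_3$: one must identify cluster coordinates on the complementary subquiver that commute with every $F_i$-path on the residual triangle, which requires a careful combinatorial analysis of the flipped quiver together with the commutation relations prescribed by Definition \ref{basic}.
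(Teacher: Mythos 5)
Your proposal follows essentially the same route as the paper: realize $\cP_\l^\fb$ and the coproduct via $F_i$-path polynomials on the amalgamated quivers, apply the flip operators $\Phi_1$ and $\Phi_3$ built from the $\cR$-matrix factorization of \cite{Ip16}, use the Heisenberg-double commutation relations of Corollary \ref{efcom} together with \eqref{g12} to show the mutations truncate $X(\cP_{F_i}\cP_{F_i}')$ to $X(\cP_{F_i})$, and finish with the shift argument of Lemma \ref{indep} to trivialize the $K_i'$ dependence on the second factor. This is exactly the content of Propositions \ref{ii'} and \ref{K1} and the surrounding discussion in Section \ref{sec:flip}.
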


Recall that given an embedding of 
$$\cU_q(\g)\inj \cX_{D_{2,2}}^{\over[\bi]\bi\over[\bi]\bi}\subset \cX_{D_{2,1}}^{\over[\bi]\bi}\ox\cX_{D_{2,1}}^{\over[\bi]\bi}$$
into the quantum torus algebra associated to a disk with 2 punctures and 2 marked points given by the coproduct, we show in \cite{Ip16} that the reduced $\cR$ operator can be decomposed into
\Eq{
\over[\cR]=\cR_4\cdot \cR_3\cdot \cR_2\cdot \cR_1,
}
where
\Eqn{
R_4&=g_{b_{i_N}}(\be_{i_N}^+\ox \bf^{N,+})... g_{b_{i_2}}(\be_{i_2}^+\ox \bf^{2,+})g_{b_{i_1}}(\be_{i_1}^+\ox \bf^{1,+}),\\
R_3&=g_{b_{i_N}}(\be_{i_N}^-\ox \bf^{N,+})... g_{b_{i_2}}(\be_{i_2}^-\ox \bf^{2,+})g_{b_{i_1}}(\be_{i_1}^-\ox \bf^{1,+}),\\
R_2&=g_{b_{i_1}}(\be_{i_1}^+\ox \bf^{1,-})g_{b_{i_2}}(\be_{i_2}^+\ox \bf^{2,-})... g_{b_{i_N}}(\be_{i_N}^+\ox \bf^{N,-}),\\
R_1&=g_{b_{i_1}}(\be_{i_1}^-\ox \bf^{1,-})g_{b_{i_2}}(\be_{i_2}^-\ox \bf^{2,-})... g_{b_{i_N}}(\be_{i_N}^-\ox \bf^{N,-}).
}
Also $\cR_2$ commute with $\cR_3$. By the correspondence in Lemma \ref{useful}, the 4 factors correspond to sequences of quiver mutations associated to flipping of triangulation in different configurations. In this paper, we show explicitly the following fact, which is only implied implicitly in \cite{Ip16}. 
\begin{Thm} The quiver mutations
 $$\mu_{\cR_k}^q:=\mu_{m_1}^q\cdots\mu_{m_M}^q$$
 induced by $\cR_k$, $k=1,3$, corresponding to the flip of triangulation preserve the $F_i$-paths (see Figure \ref{flip}). More precisely, for an amalgamation $Q^{\bi\bi'}$ of two quivers with $\bi'=\over[\bi]$ $(k=1)$ or $\bi'=\bi$ $(k=3)$ along $\cF_{out}=\cF_{in}'$, if $\cP_{F_i}, \cP_{F_i}'$ are the $F_i$ paths of the left and right triangle respectively, and $\what{\cP}_{F_i}$ are the $F_i$ paths of the bottom triangle of the mutated quiver, then
\Eq{\mu_{\cR_k}^q(\what{X}(\what{\cP}_{F_i})) =(X(\cP_{F_i}\cP_{F_i}')),\tab k=1,3,
}
where $\what{X}_j$ belongs to the quantum torus algebra $\what{\cX}^{\bi\bi'}$ of the mutated quiver, amalgamated along $\what{\cE}=\what{\cF}_{out}'$ (see Figure \ref{flip}).
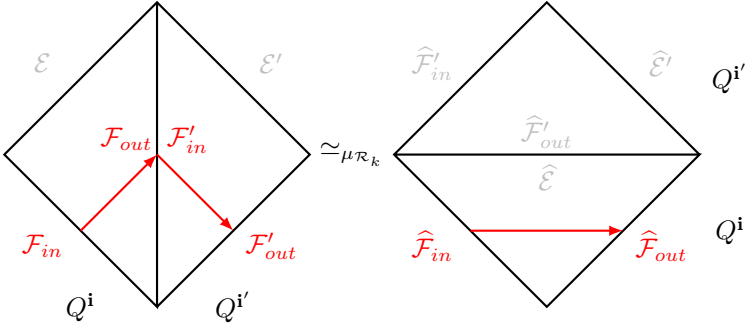
\begin{figure}[!htb]
\centering
\begin{tikzpicture}[baseline=(0),x=2cm,y=2cm]
\draw[thick] (0,0)--(1,1)--(0,2)--(-1,1)--(0,0)--(0,2);
\node (0) at (0,1){};
\node at (-0.75,0.4)[red]{$\cF_{in}$};
\node at (-0.2,1.1)[red]{$\cF_{out}$};
\node at (0.2,1.1)[red]{$\cF_{in}'$};
\node at (0.75,0.4)[red]{$\cF_{out}'$};
\node at (0.75,1.6)[gray!50]{$\cE'$};
\node at (-0.75,1.6)[gray!50]{$\cE$};
\path (-0.5,0.5) edge[->, thick, red] (0,1);
\path (0,1) edge[->, thick, red] (0.5,0.5);
\node at (-0.5, 0){$Q^\bi$};
\node at (0.5, 0){$Q^{\bi'}$};
\end{tikzpicture}
$\simeq_{\mu_{\cR_k}}$
\begin{tikzpicture}[baseline=(0),x=2cm,y=2cm]
\draw[thick] (1,1)--(0,2)--(-1,1)--(0,0)--(1,1)--(-1,1);
\node (0) at (0,1){};
\node at (0.75,1.6)[gray!50]{$\what{{\cE}}'$};
\node at (0,1.15)[gray!50]{$\what{{\cF}}'_{out}$};
\node at (0,0.85)[gray!50]{$\what{\cE}$};
\node at (-0.75,1.6)[gray!50]{$\what{{\cF}}'_{in}$};
\node at (-0.75,0.4)[red]{$\what{\cF}_{in}$};
\node at (0.75,0.4)[red]{$\what{\cF}_{out}$};
\path (-0.5,0.5) edge[->, thick, red] (0.5,0.5);
\node at (1.2, 0.5){$Q^\bi$};
\node at (1.2, 1.5){$Q^{\bi'}$};
\end{tikzpicture}
\caption{Configurations of the mutations $\mu_{\cR_k}$, $k=1,3$, with the $F_i$ paths schematically shown in red, and the irrelevant extremal nodes grayed out.}\label{flip}
\end{figure}
\end{Thm}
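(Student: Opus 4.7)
My plan is to use Lemma \ref{useful} to decompose each induced mutation as $\mu_{\cR_k}^q = \Phi_k \circ M_k$, where $M_k = \mu_{m_1}'\cdots \mu_{m_M}'$ is a monomial transformation and $\Phi_k$ is an ordered composition of quantum dilogarithm conjugations $Ad_{g_{b_{m_j}}^*(M_{j-1}(X_{m_j}^{(j-1)}))}$. The first key step is to identify $\Phi_k$ with the explicit factor $\cR_k$ appearing in the decomposition of the universal $\cR$ matrix. Under the polarization formulas \eqref{XfromF}--\eqref{XfromF2}, each internal cluster variable along an $F_i$-path becomes precisely the ratio $q_i^{-1}\bf^{k,\pm}(\bf^{k^-,\pm})^{-1}$ of consecutive monomials in the explicit expansion of $\bf_i$; applied to the arguments of the conjugating dilogarithms, this allows me to match $M_{j-1}(X_{m_j}^{(j-1)})$ termwise with the monomials $\be_{i_j}^- \otimes \bf^{j,\mp}$ (the sign choice being $-$ for $k=1$ and $+$ for $k=3$) that appear in the factorization of $\cR_k$ recalled above.

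Once this identification is established, the theorem reduces to showing that conjugation by $\cR_k$ transforms $\what X(\what{\cP}_{F_i})$ into $X(\cP_{F_i}\cP_{F_i}')$. I would do this by induction along the edges of the $F_i$-path, peeling off one quantum dilogarithm factor of $\cR_k$ at a time. Each factor $g_{b_{i_j}}(\be_{i_j}^- \otimes \bf^{j,\pm})$, by virtue of the Heisenberg-type relations of Corollary \ref{efcom} combined with the exchange identity \eqref{g12}, conjugates a single variable of the flipped $F_i$-path into a sum of two monomials; this sum realizes exactly the addition of one more edge to the concatenated path across the amalgamation boundary $\cF_{out}=\cF_{in}'$. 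The splitting \eqref{guv} of $g_b(u+v)$ into $g_b(u)g_b(v)$ for $q^2$-commuting pairs is used repeatedly to justify factoring out each step cleanly from the total expression.

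The main obstacle is the combinatorial bookkeeping: I must show that the mutation sequence $(m_1,\dots,m_M)$ dictated by the flip of triangulation matches, node by node and in the correct order, the product of quantum dilogarithm factors in $\cR_k$, and that at each stage the intermediate variables $M_{j-1}(X_{m_j}^{(j-1)})$ remain in the correct $q^2$-commuting configurations needed for the splittings to apply. This is essentially the content of the explicit construction of the flip carried out in \cite{Ip16} for the universal $\cR$ matrix, but it simplifies here because the $F_i$-paths lie entirely within $Q_F^{\bi\bi'}$ and never touch the $\cE$ nodes, so the sub-sequence of mutations needed to realize the flip at the level of $Q_F$ can be isolated from the full mutation sequence. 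A supplementary subtlety is the treatment of the frozen boundary: the variables attached to $\cF_{in}$ and $\cF_{out}$ follow slightly different monomial exchange rules than the internal ones, and the identification with $\be_{i_j}^- \otimes \bf^{j,\pm}$ must be checked separately at these endpoints using the boundary cases of \eqref{XfromF2}.

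Finally, the cases $k=1$ and $k=3$ are handled in parallel: they differ only in whether the factorization of $\cR_k$ uses $\bf^{j,-}$ or $\bf^{j,+}$, and correspondingly the flip is applied to one or the other configuration in Figure \ref{flip}. In both cases the induction along the $F_i$-path runs identically, and the end result is the same equality $\mu_{\cR_k}^q(\what X(\what{\cP}_{F_i})) = X(\cP_{F_i}\cP_{F_i}')$.
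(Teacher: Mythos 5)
Your proposal follows essentially the same route as the paper: decompose $\mu_{\cR_k}^q=\Phi_k\circ M_k$ via Lemma \ref{useful}, identify $\Phi_k$ with the corresponding factor of the universal $\cR$ matrix (split into cluster dilogarithms via \eqref{guv} and \eqref{gdouble}), defer the combinatorial matching of the mutation sequence to \cite{Ip16} (which guarantees the local moves are of the types in Figures \ref{type1}--\ref{type2}, so the mutated $F_i$-path is again a path), and then run an induction peeling off one dilogarithm factor at a time using the Heisenberg-double relations of Corollary \ref{efcom} together with \eqref{g12} — exactly the content of Proposition \ref{ii'} and its analogue for $\Phi_3$. The proposal is correct and matches the paper's argument.
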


According to the result of \cite{Ip16}, the quiver mutations corresponding to $\cR_k$ using Lemma \ref{useful} are of the type as in Figure \ref{type1}, and in the doubly-laced case also as in Figure \ref{type2}. (The case of type $G_2$ was treated explicitly in \cite{Ip16} already and will be omitted.)

\begin{figure}[H]
\centering
\begin{tikzpicture}[baseline=(0), every node/.style={inner sep=0, minimum size=0.5cm, thick}, x=0.3cm, y=0.5cm]
\node (0) at (0,1.2){};
\node at (-3,3){$\cdots$};
\node at (15,3){$\cdots$};
\node at (-3,0){$\cdots$};
\node at (15,0){$\cdots$};
\node at (0,1.7){$\vdots$};
\node at (12,1.7){$\vdots$};
\node (1) at (0,3) [draw, circle] {$k^-$};
\node (2) at (6,3) [draw, circle] {$k$};
\node (3) at (12,3) [draw, circle] {$k^+$};
\node (4) at (0, 0) [draw, circle]{$l$};
\node (5) at (12,0) [draw,circle]{$l^+$};
\path (1) edge[->, thick] (2);
\path (2) edge[->, thick] (3);
\path (2) edge[->, thick] (4);
\path (5) edge[->, thick] (2);
\path (4) edge[->, thick] (5);
\end{tikzpicture}
$\simeq_{\mu_k}$
\begin{tikzpicture}[baseline=(0), every node/.style={inner sep=0, minimum size=0.5cm, thick}, x=0.3cm, y=0.5cm]
\node at (-3,3){$\cdots$};
\node at (15,3){$\cdots$};
\node at (-3,0){$\cdots$};
\node at (15,0){$\cdots$};
\node at (0,1.7){$\vdots$};
\node at (12,1.7){$\vdots$};
\node (0) at (0,1.2){};
\node (1) at (0,0) [draw, circle] {$l$};
\node (2) at (6,0) [draw, circle] {$k$};
\node (3) at (12,0) [draw, circle] {$l^+$};
\node (4) at (0, 3) [draw, circle]{$k^-$};
\node (5) at (12,3) [draw,circle]{$k^+$};
\path (1) edge[->, thick] (2);
\path (2) edge[->, thick] (3);
\path (2) edge[->, thick] (4);
\path (5) edge[->, thick] (2);
\path (4) edge[->, thick] (5);
\end{tikzpicture}
\caption{Mutation at simply-laced part.}\label{type1}
\end{figure}
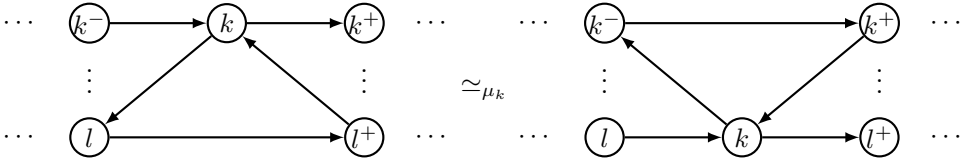
\begin{figure}[H]
\centering
\begin{tikzpicture}[baseline=(0), every node/.style={inner sep=0, minimum size=0.5cm, thick}, x=0.28cm, y=0.5cm]
\node at (-3,3){$\cdots$};
\node at (15,3){$\cdots$};
\node at (-3,0){$\cdots$};
\node at (15,0){$\cdots$};
\node at (0,1.7){$\vdots$};
\node at (12,1.7){$\vdots$};
\node (0) at (0,1.2){};
\node (1) at (0,3) [draw, circle] {$k^-$};
\node (2) at (6,3) [draw, circle] {$k$};
\node (3) at (12,3) [draw, circle] {$k^+$};
\node (4) at (0, 0) [draw, circle]{$l^-$};
\node (5) at (6,0) [draw,circle]{$l$};
\node (6) at (12,0) [draw,circle]{$l^+$};
\path (1) edge[->, vthick] (2);
\path (2) edge[->, vthick] (3);
\path (4) edge[->, thin] (5);
\path (5) edge[->, thin] (6);
\path (2) edge[->, vthick] (4);
\path (5) edge[->, vthick] (2);
\path (3) edge[->, vthick] (5);
\end{tikzpicture}
$\simeq_{\mu_l\mu_k\mu_l}$
\begin{tikzpicture}[baseline=(0), every node/.style={inner sep=0, minimum size=0.5cm, thick}, x=0.28cm, y=0.5cm]
\node at (-3,3){$\cdots$};
\node at (15,3){$\cdots$};
\node at (-3,0){$\cdots$};
\node at (15,0){$\cdots$};
\node at (0,1.7){$\vdots$};
\node at (12,1.7){$\vdots$};
\node (0) at (0,1.2){};
\node (1) at (0,3) [draw, circle] {$k^-$};
\node (2) at (6,3) [draw, circle] {$k$};
\node (3) at (12,3) [draw, circle] {$k^+$};
\node (4) at (0, 0) [draw, circle]{$l^-$};
\node (5) at (6,0) [draw,circle]{$l$};
\node (6) at (12,0) [draw,circle]{$l^+$};
\path (1) edge[->, vthick] (2);
\path (4) edge[->, thin] (5);
\path (5) edge[->, thin] (6);
\path (6) edge[->, vthick] (2);
\path (2) edge[->, vthick] (5);
\path (5) edge[->, vthick] (1);
\path (2) edge[->, vthick] (3);
\end{tikzpicture}
\caption{Mutation at doubly-laced part.}\label{type2}
\end{figure}
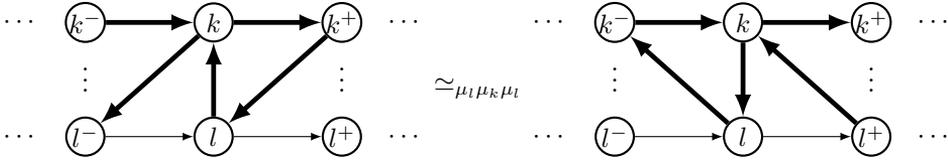
In particular, this means that the mutation of an $F_i$-path is again a path and its direction is preserved: in the simply-laced moves, the path polynomials are given by
\Eqn{
\mu_k^q(...+\what{X}_{..., k^-}+\what{X}_{..., k^-,k^+}+...)=(...+X_{..., k^-}+X_{..., k^-,k}+X_{..., k^-,k,k^+}+...),
}
while the path polynomials are preserved in the doubly-laced moves.

In terms of unitary equivalence, we have
\begin{Prop}\label{ii'}
Let $\bi'=\over[\bi]$ and consider $Q^{\bi\over[\bi]}$ as in Figure \ref{flip}. Let also $\cP_{E_i}$ be the $E_i$-paths for the left part $Q^{\bi}$ of the quiver, $\cP_{F_i}, \over[\cP]_{F_i}$ be the $F_i$-paths for the left part $Q^{\bi}$ and right part $Q^{\over[\bi]}$ of the quiver respectively. Let  
\Eqn{
X(\cP_{E_i})&=:\underline{\be}_i^-\ox1,\\
X(\over[\cP]_{F_i}) &=:1\ox \underline{\bf}_i^-\\
&=: 1\ox\sum_{k:i_k=i} \underline{\bf}^{k,-}
}
and define
\Eq{\label{phi1}\Phi_1:=g_{b_{i_1}}(\underline{\be}_{i_1}^-\ox \underline{\bf}^{1,-})g_{b_{i_2}}(\underline{\be}_{i_2}^-\ox \underline{\bf}^{2,-})... g_{b_{i_N}}(\underline{\be}_{i_N}^-\ox \underline{\bf}^{N,-}).
}
Then we have
\Eq{Ad_{\Phi_1} (X(\cP_{F_i}\over[\cP]_{F_i})) &= X(\cP_{F_i})=M_{\cR_1}(\what{X}(\what{\cP}_{F_i}))\in \cX_\bi\ox 1\\
Ad_{\Phi_1} (X_{\cP_{F_i}\over[\cP]_{F_i}}) &= X_{\cP_{F_i}\over[\cP]_{F_i}}=M_{\cR_1}(\what{X}_{\what{\cP}_{F_i}})\in \cX_\bi\ox \cX_{\over[\bi]}
}
where $M_{\cR_1}=\mu_{m_1}'\cdots \mu_{m_M}'$ is the monomial transform $\what{\cX}_{\bi\over[\bi]}\to \cX_{\bi\over[\bi]}$.
\end{Prop}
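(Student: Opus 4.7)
The plan is to reduce both claimed identities to a pair of monomial computations using the decomposition $\mu^q_{\cR_1} = \Phi_1 \circ M_{\cR_1}$ from Lemma \ref{useful}, and then invoke the preceding theorem about cluster mutations realizing the flip.

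First, I would combine that theorem (which gives $\mu^q_{\cR_1}(\what X(\what\cP_{F_i})) = X(\cP_{F_i}\over[\cP]_{F_i})$, and the analogous monomial identity) with the factorization $\mu^q_{\cR_1} = \Phi_1 \circ M_{\cR_1}$ to obtain
\[
\Phi_1\bigl(M_{\cR_1}(\what X(\what\cP_{F_i}))\bigr) = X(\cP_{F_i}\over[\cP]_{F_i}),\qquad \Phi_1\bigl(M_{\cR_1}(\what X_{\what\cP_{F_i}})\bigr) = X_{\cP_{F_i}\over[\cP]_{F_i}},
\]
where $\Phi_1$ acts by conjugation. Therefore, once we establish the two monomial identities
\[
M_{\cR_1}(\what X(\what\cP_{F_i})) = X(\cP_{F_i}),\qquad M_{\cR_1}(\what X_{\what\cP_{F_i}}) = X_{\cP_{F_i}\over[\cP]_{F_i}},
\]
both conjugation statements $Ad_{\Phi_1}(X(\cP_{F_i}\over[\cP]_{F_i})) = X(\cP_{F_i})$ and $Ad_{\Phi_1}(X_{\cP_{F_i}\over[\cP]_{F_i}}) = X_{\cP_{F_i}\over[\cP]_{F_i}}$ follow directly.

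The core of the argument is therefore to verify these monomial identities. I would track $\what X(\what\cP_{F_i})$ monomial-by-monomial through the sequence of elementary monomial transforms $\mu'_{m_j}$ making up $M_{\cR_1}$, using the explicit formula from Section \ref{sec:cluster:mutate}. Each elementary mutation in the flip sequence is of the type shown in Figure \ref{type1} (simply-laced) or Figure \ref{type2} (doubly-laced). Because the bottom triangle of the mutated quiver in Figure \ref{flip} carries the same reduced word $\bi$ as the original left triangle $Q^\bi$, the vertices of $\what\cP_{F_i}$ correspond naturally to those of $\cP_{F_i}$; one expects the mutation sequence to telescope, removing the right-triangle contributions and leaving exactly $X(\cP_{F_i})$. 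For the Cartan-type monomial, the identity is simpler: since $X_{\cP_{F_i}\over[\cP]_{F_i}}$ factors as $X_{\cP_{F_i}}\cdot X_{\over[\cP]_{F_i}}$ and corresponds to the coproduct-symmetric $\D(K_i') = K_i'\ox K_i'$, its image under the monomial transform is manifestly preserved.

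The main obstacle will be the careful bookkeeping of the telescoping computation of $M_{\cR_1}(\what X(\what\cP_{F_i}))$. One must treat both the simply-laced and doubly-laced patterns of Figures \ref{type1}-\ref{type2}; the latter involves a triple mutation $\mu_l\mu_k\mu_l$ whose combined monomial transform is more intricate, and may require invoking the doubly-laced identity \eqref{gdouble} to absorb the intermediate variables. Additionally, one must correctly identify the combinatorial bijection between the vertices of the bottom triangle of the mutated quiver and those of the original $Q^\bi$, which is implicit in the flip sequence construction of \cite{Ip16} and related to the convention $\sigma:\over[I]\to I$ of Section \ref{sec:realization}. A useful intermediate lemma would be that each elementary monomial transform $\mu_k'$ acts locally — trivially on cluster monomials supported away from vertex $k$ — so that the global telescoping can be verified by induction on the length of the mutation sequence.
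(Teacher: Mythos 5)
Your proposal inverts the logical order of the paper's argument: you propose to establish the monomial identities $M_{\cR_1}(\what{X}(\what{\cP}_{F_i}))=X(\cP_{F_i})$ and $M_{\cR_1}(\what{X}_{\what{\cP}_{F_i}})=X_{\cP_{F_i}\over[\cP]_{F_i}}$ first, by tracking monomials through the mutation sequence, and then deduce the conjugation identities. The paper does the opposite: it proves $Ad_{\Phi_1}(\cdot)$ directly and only afterwards reads off the monomial statements from Lemma \ref{useful} and the uniqueness of the path joining $\what{\cF}_{in}$ to $\what{\cF}_{out}$ in the bottom triangle. The key observation you miss is that, upon setting $X(\cP_{E_i})=\underline{\be}_i^-\ox 1$, $X(\cP_{F_i})=\underline{\bf}_i^+\ox 1$, $X(\over[\cP]_{F_i})=1\ox\underline{\bf}_i^-$ and $X_{\cP_{F_i}}=\underline{K}_i'\ox 1$, these path polynomials satisfy exactly the Heisenberg-double commutation relations of Corollary \ref{efcom}. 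Hence each factor $g_{b_{i_k}}(\underline{\be}_{i_k}^-\ox\underline{\bf}^{k,-})$ of $\Phi_1$ conjugates $\underline{\bf}_i^+\ox 1$ in closed form via \eqref{g12}, producing or absorbing precisely the term $\underline{K}_i'\ox\underline{\bf}^{k,-}$ when $i_k=i$ and acting trivially otherwise; since $X(\cP_{F_i}\over[\cP]_{F_i})=\underline{\bf}_i^+\ox 1+\underline{K}_i'\ox\underline{\bf}_i^-$, a short induction over the $N$ factors finishes the conjugation claims with no combinatorial bookkeeping at all.

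As written, your route has a genuine gap: the telescoping monomial computation, which in your scheme carries the entire content of the Proposition, is only asserted ("one expects the mutation sequence to telescope"), and the claim that the Cartan monomial is "manifestly preserved" under $M_{\cR_1}$ is likewise unargued --- a priori each $\mu'_{m_j}$ multiplies a cluster monomial by powers of the mutated variable, so preservation of $X_{\cP_{F_i}\over[\cP]_{F_i}}$ needs the same bookkeeping. Two further points require care. First, Lemma \ref{useful} packages the mutation as $\mu^q=\Phi\circ M$ with $\Phi$ built from the \emph{starred} dilogarithms $g_b^*$, whereas the $\Phi_1$ of \eqref{phi1} is unstarred; the correct combination is $\mu^q_{\cR_1}=Ad_{\Phi_1}^{-1}\circ M_{\cR_1}$. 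Your displayed reduction, taken literally together with your two monomial identities, would yield $Ad_{\Phi_1}(X(\cP_{F_i}))=X(\cP_{F_i}\over[\cP]_{F_i})$, which is the inverse of the claimed identity (and conjugation by $\Phi_1$ genuinely does not fix $X(\cP_{F_i})$, as the $A_1$ example shows). Second, identifying the operator $\Phi_1$ of \eqref{phi1} with the dilogarithm product that Lemma \ref{useful} attaches to the specific mutation sequence $\cR_1$ is itself a nontrivial input from the cluster realization (the factorization $g_{b_i}(\be_i^-)=\prod g(X_{\dots})$); the paper's direct proof of the conjugation identities does not need this identification, but your reduction invokes it at the very first step.
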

\begin{proof}Let also 
\Eqn{
X(\cP_{F_i}) &=: \underline{\bf}_i^+\ox 1,\\
X_{\cP_{F_i}}&=:\underline{K}_i'\ox1.
}
Then the commutation relations among $\{\underline{\be}_i^-,\underline{\bf}_i^+,\underline{\bf}^{k,-},\underline{K}'_i\}$ is exactly the same as $\{\be_i^-,\bf_i^+, \bf^{k,-}, K_i'\}$ from Theorem \ref{FKaction}. Hence by Proposition \ref{efcom} and \eqref{g12}, we have 
\Eqn{
Ad_{g_{b_{i_k}}(\be_{i_k}\ox \bf^{k,-})} (\bf_i^+\ox 1)&= 
\case{\bf_i^+\ox 1+K_i'\ox \bf^{k,-}& i_k = i,\\
\bf_i^+\ox 1 &i_k\neq i,}\\
Ad_{g_{b_{i_k}}(\be_{i_k}\ox \bf^{k,-})}(K_i' \ox \bf^{l,-}) &= K_i' \ox \bf^{l,-}, \tab k<l,\\
Ad_{g_{b_{i_k}}(\be_{i_k}\ox \bf^{k,-})} (K_i'\ox K_i') &= K_i'\ox K_i'.
}
Since $$X(\cP_{F_i}\over[\cP]_{F_i})=\underline{\bf}_i^+\ox 1+\underline{K}_i' \ox \underline{\bf}_i^-$$ and $$X_{\cP_{F_i}\over[\cP]_{F_i}}=\underline{K}_i'\ox \underline{K}_i',$$ by induction we have the first equality.
Since the mutated $F_i$ paths are the unique paths joining $\cF_{in}$ and $\cF_{out}$ of the bottom triangle, the $F_i$-paths are preserved under mutation. By Lemma \ref{useful} we have the claim for the monomial transform.
\end{proof}
Note that the embedding to the quiver $Q^{\bi\over[\bi]}$ gives the positive representation $\cP_\l^{\over[\bi]}$ restricted to the Borel part $\cU_{q\til{q}}(\fb_\R)$ for the word $\over[\bi]$ instead, but $$\cP_\l\simeq\cP_\l^{\over[\bi]}\simeq \cP_\l^{\bi}.$$ Hence applying Proposition \ref{ii'} we obtain the unitary transformation of the positive self-adjoint operators $\bf_i$ of $\cP_\l^\fb$ to $\til{\cP_\l^\fb}\simeq \til{\cP^\fb}$:
\Eq{
\bf_i=\bf_i^++\bf_i^-\simeq \bf_i^+.
} 
It remains to show that $K_i'$ depends only on $Q^{\bi}$ for a unitary transformation that preserves $\bf_i^+$:
\begin{Prop}\label{K1}As positive self-adjoint operators, we have the unitary equivalence
\Eq{
K_i'=X_{\cP_{F_i}\over[\cP]_{F_i}}\simeq X_{\cP_{F_i}}
}
\end{Prop}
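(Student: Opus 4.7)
My plan is to exhibit a further unitary that, combined with $\Phi_1$ from Proposition \ref{ii'}, yields the claim while preserving the generators $\bf_j^+$. The strategy parallels the shift-of-variables argument employed in the proof of Lemma \ref{indep}, leveraging the invertibility of the Cartan matrix.

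First, I would write $X_{\cP_{F_i}\over[\cP]_{F_i}} = X_{\cP_{F_i}} \cdot Y_i$ up to $q$-factors, where $Y_i$ is a telescoping product of right-quiver cluster variables along $\over[\cP]_{F_i}$, excluding the shared amalgamated vertex identified under $\cF_{out}=\cF_{in}'$. Using the explicit formulas \eqref{XfromF}--\eqref{XfromF2}, the telescoping collapses consecutive ratios of the $\over[\bf]^{\over[k],-}=\bf^{\s(k),-}$ monomials, leaving $Y_i$ as a single exponential built purely from right-quiver data, hence a positive self-adjoint operator. I would then verify via Corollary \ref{efcom} that $Y_i$ commutes with each $\bf_j^+ = X(\cP_{F_j})$: any nontrivial commutation comes from pairs of $\bf^{k,+}$ and $\bf^{l,-}$ factors, whose contributions telescope to triviality under the path structure established in Section \ref{sec:realization}.

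Second, by a unitary change of variables on the right-quiver sector, analogous to the shifts $u_k \mapsto u_k - c_k$ in the proof of Lemma \ref{indep}, I would absorb each $Y_i$ into a scalar phase while keeping every $\bf_j^+$ intact. The invertibility of the Cartan matrix $A=(a_{ij})$ provides exactly the freedom required to arrange this absorption simultaneously across all $i\in\cI$. Combined with Proposition \ref{ii'}, together with the identity $Ad_{\Phi_1}(X_{\cP_{F_i}\over[\cP]_{F_i}}) = X_{\cP_{F_i}\over[\cP]_{F_i}}$ proved there, this yields the full unitary equivalence $\cP_\l^\fb \simeq \til{\cP^\fb}$ asserted in the first line of Theorem \ref{main}.

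The main obstacle is ensuring that the absorbing unitary is simultaneously compatible with all $n$ generators $K_i'$, which reduces to a linear-algebraic condition controlled by the Cartan matrix invertibility, precisely as in Lemma \ref{indep}. A secondary technical point is the careful bookkeeping of the $q$-phases arising from telescoping non-commuting cluster variables, which is streamlined by the symmetrized $X_\lambda$-notation of Section \ref{sec:cluster:torus}.
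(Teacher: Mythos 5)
Your proposal is correct and follows essentially the same route as the paper: the paper likewise observes that the factors of $K_i'$ coming from the second path $\over[\cP]_{F_i}$ commute with the left-quiver data, so they can be treated as parameters of the representation and then absorbed by the same shift-of-variables argument (using invertibility of the Cartan matrix) as in Lemma \ref{indep}. Your extra bookkeeping of the telescoping product and $q$-phases is a more explicit rendering of the same idea, not a different method.
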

\begin{proof} The factors in the expression of $K_i'$ involving the second path $\over[\cP]_{F_i}$ commute with the first factor, hence we can just treat them as parameters of the representation. Hence using exactly the same argument as in Lemma \ref{indep}, we arrive at the conclusion.
\end{proof}
This proves the first claim of Theorem \ref{main}.

In exactly the same way, for $Q^{\bi\bi}$ as in Figure \ref{flip} for $\bi'=\bi$, we have
\begin{Prop}
Let $\cP_{E_i}$ be the $E_i$-paths for the left part $Q_1^{\bi}$ of the quiver, and $\cP_{F_i}^1,\cP_{F_i}^2$ be the $F_i$-paths for the left part $Q_1^{\bi}$ and the right part $Q_2^{\bi}$ of the quiver respectively. Let 
\Eqn{
X(\cP_{E_i})&=:\underline{\be}_i^-\ox1,\\
 X(\cP^2_{F_i}) &=: 1\ox\sum_{k:i_k=i} \underline{\bf}^{k,+},
 } and define
\Eq{\label{phi3}\Phi_3=g_{b_{i_N}}(\underline{\be}_{i_N}^-\ox \underline{\bf}^{N,+})... g_{b_{i_2}}(\underline{\be}_{i_2}^-\ox \underline{\bf}^{2,+})g_{b_{i_1}}(\underline{\be}_{i_1}^-\ox \underline{\bf}^{1,+}).
}
Then
\Eq{Ad_{\Phi_3} (X(\cP^1_{F_i}\cP^2_{F_i})) &= X(\cP^1_{F_i})\in \cX_\bi\ox 1,\\
Ad_{\Phi_3} (X_{\cP^1_{F_i}\cP^2_{F_i}}) &= X_{\cP^1_{F_i}\cP^2_{F_i}}\in \cX_\bi\ox \cX_{\bi}.
}
\end{Prop}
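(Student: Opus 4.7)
The plan is to mirror the proof of Proposition \ref{ii'} verbatim, replacing the conjugating factors $\underline{\bf}^{k,-}$ by $\underline{\bf}^{k,+}$ and reversing the order of the conjugators as dictated by the definition \eqref{phi3} of $\Phi_3$. First, on the amalgamated quiver $Q^{\bi\bi}$, I would decompose the path polynomials as
\begin{align*}
X(\cP^1_{F_i}\cP^2_{F_i}) &= \underline{\bf}_i^+\otimes 1 + \underline{K}_i'\otimes \sum_{k:i_k=i}\underline{\bf}^{k,+},\\
X_{\cP^1_{F_i}\cP^2_{F_i}} &= \underline{K}_i'\otimes \underline{K}_i',
\end{align*}
where $\underline{\bf}_i^+:=X(\cP^1_{F_i})$ and $\underline{K}_i':=X_{\cP^1_{F_i}}$ belong to the left copy of $\cX^\bi$, and the generators $\{\underline{\be}_i^-,\underline{\bf}_i^+,\underline{K}_i',\underline{\bf}^{k,+}\}$ satisfy the commutation relations of their counterparts from Theorem \ref{FKaction} and Corollary \ref{efcom}; in particular $[\underline{\be}_{i_k}^-,\underline{\bf}_i^+]=\delta_{i_k,i}(q_i-q_i^{-1})\underline{K}_i^{-1}$.

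The identity \eqref{g12} is then applied one factor at a time. A direct commutator computation gives, for $i_k=i$,
$$Ad_{g_{b_{i_k}}(\underline{\be}_{i_k}^-\otimes\underline{\bf}^{k,+})}(\underline{\bf}_i^+\otimes 1)=\underline{\bf}_i^+\otimes 1+\underline{K}_i'\otimes\underline{\bf}^{k,+},$$
while the conjugator fixes $\underline{\bf}_i^+\otimes 1$ when $i_k\neq i$. The induction is carried out along the ordering of $\Phi_3$, which applies $g_{b_{i_1}}$ innermost and $g_{b_{i_N}}$ outermost. After the $k$-th step the accumulated polynomial is $\underline{\bf}_i^+\otimes 1+\underline{K}_i'\otimes\sum_{l\leq k,\,i_l=i}\underline{\bf}^{l,+}$; every subsequent conjugator $g_{b_{i_m}}$ with $m>k$ leaves this fixed because the $q$-commutation $\underline{\bf}^{m,+}\underline{\bf}^{l,+}=q_{i_m}^{a_{ml}}\underline{\bf}^{l,+}\underline{\bf}^{m,+}$ for $l<m$ from Corollary \ref{efcom}, together with the $K_i$-commutations for $\underline{\be}_{i_m}^-$, forces the commutator $[\underline{\be}_{i_m}^-\otimes\underline{\bf}^{m,+},\underline{K}_i'\otimes\underline{\bf}^{l,+}]$ to vanish. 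Iteration yields the first equality. For the second, one checks that $\underline{K}_i'\otimes\underline{K}_i'$ commutes with every $\underline{\be}_{i_k}^-\otimes\underline{\bf}^{k,+}$: the scalar $q_i^{-a_{i,i_k}}$ from moving $\underline{K}_i'$ past $\underline{\be}_{i_k}^-$ cancels the scalar $q_i^{a_{i,i_k}}$ from moving $\underline{K}_i'$ past $\underline{\bf}^{k,+}$, so $Ad_{\Phi_3}$ fixes $X_{\cP^1_{F_i}\cP^2_{F_i}}$.

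The only subtlety, and hence the main obstacle, is the bookkeeping: the reversal of ordering from $\Phi_1=g_{b_{i_1}}\cdots g_{b_{i_N}}$ to $\Phi_3=g_{b_{i_N}}\cdots g_{b_{i_1}}$ must exactly compensate for the sign flip between $\bf^{k,-}\bf^{l,-}=q_{i_k}^{-a_{kl}}\bf^{l,-}\bf^{k,-}$ and $\bf^{k,+}\bf^{l,+}=q_{i_k}^{+a_{kl}}\bf^{l,+}\bf^{k,+}$ in Corollary \ref{efcom}. Once one verifies that this compensation is exact, so that every newly generated summand $\underline{K}_i'\otimes\underline{\bf}^{l,+}$ is fixed by all later conjugators, the rest of the argument runs line by line as in Proposition \ref{ii'}.
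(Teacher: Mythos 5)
Your proposal is correct and matches the paper's approach: the paper gives no separate argument for this proposition, stating only that it follows ``in exactly the same way'' as Proposition \ref{ii'}, and your proof is precisely that argument transported to $Q^{\bi\bi}$ with $\underline{\bf}^{k,-}$ replaced by $\underline{\bf}^{k,+}$. You moreover make explicit the one point the paper leaves implicit, namely that the reversed ordering of the dilogarithm factors in \eqref{phi3} is exactly what compensates for the sign flip $q_{i_k}^{\mp a_{kl}}$ in Corollary \ref{efcom}, so that each newly created term $\underline{K}_i'\ox\underline{\bf}^{l,+}$ is fixed by all subsequent conjugators.
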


Since the coproduct on $\til{\cP^\fb}\ox \til{\cP^\fb}$ is simply represented by concatenation of the $F_i$-paths, together with exactly the same argument as in Proposition \ref{K1} for the $K_i'$ generators, we conclude that
\Eqn{
\D(\bf_i)&\simeq \bf_i\ox 1,\\
\D(K_i')&\simeq K_i'\ox 1,
}
and hence
$$\til{\cP^\fb}\ox \til{\cP^\fb} \simeq \til{\cP^\fb}\ox \cM$$
for a multiplicity module $\cM\simeq L^2(\R^N)$, and this concludes the proof of the main theorem. 

\hfill \ensuremath{\Box}

In particular, since one can decompose $g_{b_i}(\be_i^-)=\prod g_{b_{...}}(X_{...})$ into product of quantum dilogarithms with quantum cluster monomials as arguments \cite{Ip16}, one can rewrite the unitary transformation $\Phi_k$ as honest unitary operators acting on $L^2(\R^N)$ easily using the formula \eqref{XfromF}-\eqref{XfromF2} to rewrite the quantum cluster variables $X_i$ as positive self-adjoint operators in terms of the canonical variables $\{u_i, p_i\}$. Therefore this completely solves the combinatorial problem posed in \cite{Ip14} for explicitly writing down the tensor product decomposition of positive representations restricted to $\cU_{q\til{q}}(\fb_\R)$ of all types other than type $A_n$, which cannot be done previously without the cluster realization.
%================================================
\section{Examples}\label{sec:ex}
We consider several examples to illustrate the construction above. We will describe the basic quiver, and the positive representations realized on the quiver $Q^{\bi\over[\bi]}$. We recall the $\cP_{E_i}$ paths found in \cite{Ip16} and its quantum dilogarithm decomposition. This allows us to describe the unitary transformation $\Phi_1$ and $\Phi_3$ explicitly using \eqref{phi1}, \eqref{phi3} corresponding to certain sequence $\mu$ of quiver mutations on $Q^{\bi\over[\bi]}$ and $Q^{\bi\bi}$ respectively. We will use the standard indexing, and display the $Q_F^{\bi\bi'}$ quiver and its mutation, highlighting the $\cP_{F_i}$ paths, while grayed out the irrelevant parts incident to the frozen nodes $\cE$. 

\begin{Rem}For the first flip, in the more general setting corresponding to self-folded triangles where the frozen nodes in $\cE$ of both triangles are identified (see Figure \ref{PP}), although the $Q_F^{\bi\over[\bi]}$ part will not be affected, the grayed out part of the resulting mutated quiver after $\mu_{\cR_1}$ is much more complicated and interesting (see \cite{SS17} for an example in type $A_n$), and is considered an important combinatorial part for the general problem of tensor product decomposition of positive representations of the whole quantum group $\cU_{q\til{q}}(\g_\R)$.
\end{Rem}
%================================================
\subsection{Type $A_1$}\label{sec:a1}
The representation theory of $\cU_{q\til{q}}(\sl(2,\R))$, first described by Faddeev \cite{Fa2}, is studied extensively in various papers \cite{BT,  Ip1, PT1, PT2}. In terms of the notations in this paper, it reads the following. The basic quiver is simply given by 3 nodes as in Figure \ref{basicA1}. Obviously $Q^{\bi}\simeq Q^{\over[\bi]}$, but let us still denote the nodes of $Q^{\over[\bi]}$ by $\over[k]\in\over[I]$.
\begin{figure}[!htb]
\centering
\begin{tikzpicture}[baseline=(3),every node/.style={inner sep=0, minimum size=0.5cm, thick}, x=0.6cm, y=0.6cm]
\node (1) at (-2,0)[draw]{$1$};
\node (2) at (2,0)[draw]{$2$};
\node (3) at (0,2)[draw,gray!30]{$3$};
\node at (-3.1,0){$\cF_{in}$};
\node at (3.3,0){$\cF_{out}$};
\node at (0,3)[gray!30]{$\cE$};
\drawpath{1,2}{thick};
\drawpath{2,3,1}{gray!30, thick};
\end{tikzpicture}
\caption{Basic quiver $Q_{A_1}^{\bi}$.}\label{basicA1}
\end{figure}

The positive representation $\cP_\l^\fb$ is represented through $\cX^{\bi\over[\bi]}$ (where $\over[1]=2$) as
\Eqn{
\bf&=e^{\pi b(-u+2\l+2p)}+e^{\pi b(u-2\l+2p)}:=X_{1}+X_{1,\over[1]},\\
K'&=e^{2\pi b(u-\l)}:=X_{1,\over[1],\over[2]}.
}

\begin{figure}[!htb]
\centering
\begin{tikzpicture}[baseline=(0),every node/.style={inner sep=0, minimum size=0.5cm, thick}, x=0.7cm, y=0.7cm]
\node (0) at (0,1){};
\node (1) at (-4,0)[draw]{$1$};
\node (1') at (0,0)[draw, circle]{$\over[1]$};
\node (2') at (4,0)[draw]{$\over[2]$};
\node (3) at (-2,2)[draw, gray!30]{$3$};
\node (3') at (2,2)[draw, gray!30]{$\over[3]$};
\node at (-2,-1){$Q_{A_1}^{\bi}$};
\node at (2,-1){$Q_{A_1}^{\over[\bi]}$};
\drawpath{1,1',2'}{red,thick};
\drawpath{2',3',1',3,1}{gray!30, thick};
\end{tikzpicture}
$\simeq_{\mu_{\over[1]}}$
\begin{tikzpicture}[baseline=(2),every node/.style={inner sep=0, minimum size=0.5cm, thick}, x=0.7cm, y=0.7cm]
\node (1) at (-2,-2)[draw]{$1$};
\node (1') at (0,0)[draw, circle]{$\over[1]$};
\node (2') at (2,-2)[draw]{$\over[2]$};
\node (3) at (-2,2)[draw, gray!30]{$3$};
\node (3') at (2,2)[draw, gray!30]{$\over[3]$};
\node at (4,-1){$Q_{A_1}^{\bi}$};
\node at (4,1){$Q_{A_1}^{\over[\bi]}$};
\drawpath{1,2'}{red,thick}
\drawpath{2',1',1}{thick}
\drawpath{3',3,1',3'}{gray!30, thick}
\end{tikzpicture}
\caption{$\cP_\l^\fb\simeq \til{\cP^\fb}$ in type $A_1$ with the $F_i$ path shown in red.}\label{mutateA1}
\end{figure}

The $E$ and $F$-paths are $\cP_{E}=(2,3), \cP_{F}=(1,2)$ respectively, hence $\Phi_1$ is given by $$\Phi_1=g_b(\underline{\be}\ox \underline{\bf}^{1,-}) =g_b(X_2\ox X_{\over[1]}):= g_b(X_{\over[1]})=g_b(e^{\pi b(2u-4\l)}),$$ where we identified $X_{\over[1]}:=X_2\ox X_{\over[1]}$ in the embedding $\cX^{\bi\over[\bi]}\inj \cX^{\bi}\ox \cX^{\over[\bi]}$. It simply mutates at the node $\over[1]=2$ as in Figure \ref{mutateA1}. Using \eqref{gcon}, we obviously have
\Eqn{
Ad_{g_b(X_{\over[1]})}(\bf) &= Ad_{g_b(X_{\over[1]})}(X_1+X_{1,\over[1]}) = X_1 = M_{\cR_1}(\what{X}_1),\\
Ad_{g_b(X_{\over[1]})}(K') &= Ad_{g_b(X_{\over[1]})}(X_{1,\over[1],\over[2]}) = X_{1,\over[1],\over[2]} = M_{\cR_1}(\what{X}_{1,\over[2]}).
}
Finally, we can get rid of $\l$ by making a shift $$(2p\mapsto 2p-\l)\circ(u\mapsto u+\l)$$ to obtain the unitary equivalence $\cP_\l\simeq \til{\cP^\fb}$:
\Eqn{
\bf\simeq e^{\pi b(-u+2p)},\tab K'\simeq e^{2\pi b u}.
}
For the tensor product $\til{\cP^\fb}\ox\til{\cP^\fb}$ as in Figure \ref{finalA1}, we use $'$ to denote the nodes of the second tensor factor, such that 
\Eqn{
\D(\bf)&=e^{\pi(-u+2p)}+e^{\pi b (2u-u'+2p')}=X_1+X_{1,1'},\\
 \D(K')&=e^{2\pi b(u+u')}=X_{1,1',\over[2]'}.} Up to re-indexing, we have exactly the same transformation, where $$\Phi_3=g_b(X_{1'})=g_b(e^{\pi b(3u-u'-2p+2p')})$$ and
\Eqn{
Ad_{g_b(X_{1'})}(\D(\bf)) &= Ad_{g_b(X_{1'})}(X_1+X_{1,1'}) = X_1=M_{\cR_3}(\what{X}_1),\\
Ad_{g_b(X_{1'})}(\D(K')) &= Ad_{g_b(X_{1'})}(X_{1,1',\over[2]'})=X_{1,1',\over[2]'}=M_{\cR_3}(\what{X}_{1,\over[2]'}).
}
Again shifting by $$(2p\mapsto 2p-u')\circ (u\mapsto u-u')$$ we arrive at
\Eqn{
\D(\bf)\simeq e^{\pi(-u+2p)},\tab \D(K')\simeq e^{2\pi b u},
}
which gives $$\til{\cP^\fb}\ox \til{\cP^\fb}\simeq \til{\cP^\fb}\ox \cM$$ as required.

\begin{figure}[!htb]
\centering
\begin{tikzpicture}[baseline=(0),every node/.style={inner sep=0, minimum size=0.5cm, thick}, x=0.7cm, y=0.7cm]
\node (0) at (0,1){};
\node (1) at (-4,0)[draw]{$1$};
\node (1b) at (-2,2)[draw, gray!30,circle]{$\over[1]$};
\node (1') at (0,0)[draw,circle, red]{$1'$};
\node (1b') at (2,2)[draw, gray!30,circle]{$\over[1]'$};
\node (2b') at (4,0)[draw]{$\over[2]'$};
\node at (-2,-1){$Q_{A_1}^{\bi}$};
\node at (2,-1){$Q_{A_1}^{\bi}$};
\drawpath{1,1',2b'}{red,thick}
\drawpath{2b',1b',1',1b,1}{gray!30,thick}
\end{tikzpicture}
$\simeq_{\mu_{1'}}$
\begin{tikzpicture}[baseline=(1'),every node/.style={inner sep=0, minimum size=0.5cm, thick}, x=0.7cm, y=0.7cm]
\node (1) at (-2,-2)[draw]{$1$};
\node (1b) at (-2,2)[draw, circle,gray!30]{$\over[1]$};
\node (1') at (0,0)[draw, circle]{$1'$};
\node (1b') at (2,2)[draw, circle,gray!30]{$\over[1]'$};
\node (2b') at (2,-2)[draw]{$\over[2]'$};
\node at (4,-1){$Q_{A_1}^{\bi}$};
\node at (4,1){$Q_{A_1}^{\bi}$};
\drawpath{1,2b'}{red,thick}
\drawpath{2b',1',1}{thick}
\drawpath{1',1b',1b,1'}{gray!30, thick}
\end{tikzpicture}
\caption{$\til{\cP^\fb}\ox \til{\cP^\fb}\simeq \til{\cP^\fb}\ox \cM$ in type $A_1$.}\label{finalA1}
\end{figure}
%================================================
\subsection{Type $A_n$}\label{sec:an}
Let $$\bi=(1,2,1,3,2,1,...,n,n-1,...,1)$$ be the standard word. We note that $\over[\bi]$ differs from $\bi$ only by the moves $(i,j)\corr(j,i)$ for $|i-j|>1$. In particular, the quiver $Q^{\bi}\simeq Q^{\over[\bi]}$ is identical up to re-indexing (in fact it establishes a $\Z_3$ symmetry). Therefore the mutation sequence corresponding to $\cR_1$ and $\cR_3$ is again identical up to re-indexing. Two different but equivalent mutation sequences for $\cR_k$ has been established explicitly in \cite{Ip16} and \cite{SS16}. Let us apply this for our running example in the case $A_3$.

The basic quiver for $A_3$ is drawn in Figure \ref{A3q}. The positive representation $\cP_\l^\fb$ is given on $\cX^{\bi\over[\bi]}$ (where $\over[1]=7, \over[2]=8, \over[3]=9$) as shown in the left quiver of Figure \ref{mutateA3} by
\Eqn{
\bf_1&=X(1,3,6,\over[1],\over[4],\over[6]),& K_1' &=X_{1,3,6,\over[1],\over[4],\over[6],\over[7]},\\
\bf_2&=X(2,5,\over[2],\over[5]), &K_2' &=X_{2,5,\over[2],\over[5],\over[8]},\\
\bf_3&=X(4,\over[3]),& K_3' &=X_{4,\over[3],\over[9]}.
}

\begin{figure}[H]
\centering
\begin{tikzpicture}[baseline=(3), every node/.style={inner sep=0, minimum size=0.4cm, thick, fill=white, draw,circle}, x=0.8cm, y=0.8cm]
\node (1) at (-3,-1)[rectangle]{$1$};
\node (2) at (-2,-2)[rectangle]{$2$};
\node (3) at (-2,0){$3$};
\node (4) at (-1,-3)[rectangle]{$4$};
\node (5) at (-1,-1){$5$};
\node (6) at (-1,1){$6$};
\node (1') at (0,2){$\over[1]$};
\node (2') at (0,0){$\over[2]$};
\node (3') at (0,-2){$\over[3]$};
\node (10) at (-1,3)[rectangle,gray!30,fill=white]{10};
\node (11) at (-2,2)[rectangle,gray!30,fill=white]{11};
\node (12) at (-3,1)[rectangle,gray!30,fill=white]{12};
\node (4') at (1,1){$\over[4]$};
\node (5') at (1,-1){$\over[5]$};
\node (6') at (2,0){$\over[6]$};
\node (7') at (3,-1)[rectangle]{$\over[7]$};
\node (8') at (2,-2)[rectangle]{$\over[8]$};
\node (9') at (1,-3)[rectangle]{$\over[9]$};
\node (12') at (1,3)[rectangle,gray!30,fill=white]{$\over[12]$};
\node (11') at (2,2)[rectangle,gray!30,fill=white]{$\over[11]$};
\node (10') at (3,1)[rectangle,gray!30,fill=white]{$\over[10]$};
\node at (-1.5,-4)[draw=none]{$Q_{A_3}^{\bi}$};
\node at (1.5,-4)[draw=none]{$Q_{A_3}^{\over[\bi]}$};
\drawpath{1,3,6,1',4',6',7'}{red, thick}
\drawpath{2,5,2',5',8'}{red, thick}
\drawpath{4,3',9'}{red, thick}
\drawpath{8',6',5',4',2',6,5,3,2}{thick}
\drawpath{9',5',3',5,4}{thick}
\drawpath{7',10',6',11',4',12',1',10,6,11,3,12,1}{thick, gray, opacity=0.3}
\drawpath{4,2,1}{dashed}
\drawpath{7',8',9'}{dashed}
\drawpath{12,11,10}{dashed,gray, opacity=0.3}
\drawpath{12',11',10'}{dashed,gray, opacity=0.3}
\end{tikzpicture}
$\simeq_{\mu_{\cR_1}}$
\begin{tikzpicture}[baseline=(3), every node/.style={inner sep=0, minimum size=0.4cm, thick, fill=white, draw,circle}, x=0.8cm, y=0.8cm]
\node (1) at (-3,-1)[rectangle]{$1$};
\node (2) at (-2,-2)[rectangle]{$2$};
\node (3) at (-2,0){$3$};
\node (4) at (-1,-3)[rectangle]{$4$};
\node (5) at (-1,-1){$5$};
\node (6) at (-1,1){$6$};
\node (1') at (0,2){$\over[1]$};
\node (2') at (0,0){$\over[2]$};
\node (3') at (0,-2){$\over[3]$};
\node (10) at (-1,3)[rectangle,gray!30,fill=white]{10};
\node (11) at (-2,2)[rectangle,gray!30,fill=white]{11};
\node (12) at (-3,1)[rectangle,gray!30,fill=white]{12};
\node (4') at (1,1){$\over[4]$};
\node (5') at (1,-1){$\over[5]$};
\node (6') at (2,0){$\over[6]$};
\node (7') at (3,-1)[rectangle]{$\over[7]$};
\node (8') at (2,-2)[rectangle]{$\over[8]$};
\node (9') at (1,-3)[rectangle]{$\over[9]$};
\node (12') at (1,3)[rectangle,gray!30,fill=white]{$\over[12]$};
\node (11') at (2,2)[rectangle,gray!30,fill=white]{$\over[11]$};
\node (10') at (3,1)[rectangle,gray!30,fill=white]{$\over[10]$};
\drawpath{1,5,5',7'}{red, thick}
\drawpath{2,3',8'}{red, thick}
\drawpath{4,9'}{red, thick}
\drawpath{8',5',3',5,2}{thick}
\drawpath{7',6',5',2',5,3,1}{thick}
\drawpath{3,6,1',4',6,2',4',6'}{thick}
\drawpath{6',10',4',11',1',12',10,1',11,6,12,3}{gray!30,thick}
\drawpath{9',3',4}{thick}
\drawpath{4,2,1}{dashed}
\drawpath{7',8',9'}{dashed}
\drawpath{12,11,10}{dashed,gray, opacity=0.3}
\drawpath{12',11',10'}{dashed,gray, opacity=0.3}
\node at (4,-1.5)[draw=none]{$Q_{A_3}^{\bi}$};
\node at (4,1.5)[draw=none]{$Q_{A_3}^{\over[\bi]}$};
\end{tikzpicture}
\caption{$\cP_\l^\fb\simeq \til{\cP^\fb}$ in type $A_3$ drawn symmetrically, with $F_i$-paths shown in red}\label{mutateA3}
\end{figure}

In general for the current choice of $\bi$, the $E_i$-paths are the unique shortest path that goes from $\cF_{out}$ to $\cE$ corresponding to the root index $i$. In the case of type $A_3$ they are given by (cf. Figure \ref{A3q})
\Eqn{
\cP_{E_1}&=(7,10), \\
\cP_{E_2}&=(8,6,11),\\
\cP_{E_3}&=(9,5,3,12).
}
Hence by \eqref{guv} we easily get
\Eqn{
g_b(\til{\be}_1^-)&=g_b(X_7),\\
g_b(\til{\be}_2^-)&=g_b(X_{6,8})g_b(X_8), \\
g_b(\til{\be}_3^-)&=g_b(X_{3,5,9})g_b(X_{5,9})g_b(X_9).
}
The $F_i$-paths are given by the shortest path joining $\cF_{in}$ to $\cF_{out}$
\Eqn{
\cP_{F_1}&=(1,3,6,7), \\
 \cP_{F_2}&=(2,5,8), \\
 \cP_{F_3}&=(4,9).
}

Identifying $X_{\over[1]}:=X_7\ox X_{\over[1]}, X_{\over[2]}:=X_8\ox X_{\over[2]}$ and $X_{\over[3]}:=X_9\ox X_{\over[3]}$, we obtain using \eqref{phi1} the unitary transformation $\Phi_1$ corresponding to a sequence of 10 mutations:
\Eqn{
\Phi_1&=g_b(X_{\over[1],\over[4],\over[6]})g_b(X_{6,\over[2],\over[5]})g_b(X_{\over[2],\over[5]})g_b(X_{\over[1],\over[4]})g_b(X_{3,5,\over[3]})g_b(X_{5,\over[3]})g_b(X_{\over[3]})g_b(X_{6,\over[2]})g_b(X_{\over[2]})g_b(X_{\over[1]}).
}
According to \cite{Ip16}, it corresponds to mutation at $$\mu_{\cR_1}=(\over[1],\over[2],6,\over[3],5,3,\over[4],\over[5],\over[2],\over[6]).$$

The mutation sequence for $Q^{\bi\bi}$ and the unitary transform $\Phi_3$ is identical up to re-indexing since $Q^{\bi}\simeq Q^{\over[\bi]}$. Hence together with the shifting as in Lemma \ref{indep} gives $\til{\cP^\fb}\ox \til{\cP^\fb}\simeq \til{\cP^\fb}\ox \cM$ as required. 

We can further write down the transformations $\Phi_k$ explicitly as unitary operators on the Hilbert space $L^2(\R^N)$ easily. For example, using the notation in Theorem \ref{FKaction}, the representation $\til{\cP^\fb}\simeq L^2(\R^6)$ for $\bi=(1,2,1,3,2,1)$ is given by
\Eqn{
\bf_1&=e^{\pi b(-2u_1+u_2-2u_3+u_5-u_6+2p_6)}+e^{\pi b(-2u_1+u_2-u_3+2p_3)}+e^{\pi b(-u_1+2p_1)},\\
\bf_2&=e^{\pi b(u_1-2u_2+u_3+u_4-u_5+2p_5)}+e^{\pi b(u_1-u_2+2p_2)},\\
\bf_3&=e^{\pi b(u_2-u_4+2p_4)},\\
K_1'&=e^{\pi b(2u_1-u_2+2u_3-u_5+2u_6)},\\
K_2'&=e^{\pi b(-u_1+2u_2-u_3-u_4+2u_5-u_6)},\\
K_3'&=e^{\pi b(-u_2+2u_4-u_5)}.
}
Then using $'$ for the second component, and $$\D(\bf_i)=\bf\ox 1+K_i' \ox \bf_i,$$ we obtain the unitary equivalence $\til{\cP^\fb}\ox \til{\cP^\fb}\simeq \til{\cP^\fb}\ox \cM$ where
\Eqn{
Ad_{\Phi_3}\cdot \D(\bf_i) &= \bf_i\ox 1,\\
Ad_{\Phi_3}\cdot \D(K_i) &= K_i\ox K_i,
}
is given by
\Eqn{\Phi_3&=g_b(e^{\pi b(3u_1-u_2+2u_3-u_5+2u_6-u_1'-2p_1+2p_1')})g_b(e^{\pi b(-5u_1+2u_2-u_4+2u_5-u_6+u_1'-u_2'+2p_1-2p_2-2p_3+2p_2')})\\
&g_b(e^{\pi b(-2u_1+3u_2-u_3-u_4+2u_5-u_6+u_1'-u_2'-2p_2+2p_2')})g_b(e^{\pi b(3u_1-u_2+2u_3-u_5+2u_6+2u_1'+u_2'-u_3'-2p_1+2p_3')})\\
&g_b(e^{\pi b(4u_1-u_2+2u_4-u_5+u_6+u_2'-u_4'+2p_2+2p_3-2p_4-2p_5-2p_6+2p_4')})g_b(e^{\pi b(-u_2-u_3+2u_4+u_2'-u_4'+2p_2-2p_4-2p_5+2p_4')})\\
&g_b(e^{\pi b(-2u_2+3u_4-u_5+u_2'-u_4'-2p_4+2p_4')})g_b(e^{\pi b(-5u_1+2u_2-u_4+2u_5-u_6+u_1'-2u_2'+u_3'+u_4'-u_5'+2p_1-2p_2-2p_3+2p_5')})\\
&g_b(e^{\pi b(-2u_1+3u_2-u_3-u_4+2u_5-u_6+u_1'-2u_2'+u_3'+u_4'-u_5'-2p_2+2p_5')})\\
&g_b(e^{\pi b(3u_1-u_2+2u_3-u_5+2u_6-2u_1'+u_2'-2u_3'+u_5'-u_6'-2p_1+2p_6')}),
}
while the unitary transformation $\cS$ in Proposition \ref{K1}:
\Eqn{
\cS\cdot (K_i\ox K_i)&=K_i\ox 1,\\
\cS\cdot (\bf_i\ox 1) &= \bf_i\ox 1
}
is given by the shifts
\Eqn{\cS&=(2p_1\mapsto 2p_1-u_1'-u_3'-u_6')\circ(2p_2\mapsto 2p_2+u_1'-u_2'+u_3'-u_5'+u_6')\circ\\
&(2p_3\mapsto 2p_3+2u_1'+u_2'+2u_3'+u_5'+2u_6')\circ(2p_4\mapsto 2p_4+u_2'-u_4'+u_5')\circ \\
&(2p_5\mapsto 2p_5+u_1'-2u_2'+u_3'+u_4'-2u_5'+u_6')\circ(2p_6\mapsto 2p_6-2u_1'+u_2'-2u_3'+u_5'-2u_6')\circ \\
&(u_1\mapsto u_1-u_1'-u_3'-u_6')\circ(u_2\mapsto u_2-u_2'-u_5')\circ(u_4\mapsto u_4-u_4'),
}
where $$2p_k\mapsto 2p_k-\l$$ is realized by multiplication by the unitary function $e^{\pi i \l u_k}$ on $L^2(\R^6)$.

%================================================
\subsection{Type $B_3$}\label{sec:b3}
Finally let us consider the situation where $Q^{\bi}\not\simeq Q^{\over[\bi]}$, hence we need to describe $\Phi_1$ and $\Phi_3$ separately. Let $$\bi=(1,2,1,2,3,2,1,2,3)$$ be the reduced word chosen in \cite{Ip16}, and $$\over[\bi]=(3,2,1,2,3,2,1,2,1)$$ be the reversed word. The basic quiver for $B_3$ is drawn in Figure \ref{B3q}. The positive representation $\cP_\l^\fb$ is given on $\cX^{\bi\over[\bi]}$ indexed as in the top of Figure \ref{mutateB3} by
\Eqn{
\bf_1&=X(1,3,6,\over[1],\over[4],\over[6]),& K_1' &=X_{1,3,6,\over[1],\over[4],\over[6],\over[7]},\\
\bf_2&=X(2,5,\over[2],\over[5]),&K_2' &=X_{2,5,\over[2],\over[5],\over[8]},\\
\bf_3&=X(4,\over[3]),& K_3'& =X_{4,\over[3],\over[9]}.
}

\begin{figure}[H]
\centering
\begin{tikzpicture}[every node/.style={inner sep=0, minimum size=0.5cm, thick, draw,circle}, x=0.5cm, y=0.75cm]
\node (1) at (0,4)[rectangle]{$1$};
\node (2) at (0,2)[rectangle]{$2$};
\node (3) at (3,4){$3$};
\node (4) at (3,2){$4$};
\node (5) at (0,0)[rectangle]{$5$};
\node (6) at (6,2){$6$};
\node (7) at (9,4){$7$};
\node (8) at (9,2){$8$};
\node (9) at (6,0){$9$};
\node (13) at (3,6)[rectangle, gray!30]{$13$};
\node (14) at (6,6)[rectangle, gray!30]{$14$};
\node (15) at (9,6)[rectangle, gray!30]{$15$};
\node (1') at (12,0){$\over[1]$};
\node (2') at (12,2){$\over[2]$};
\node (3') at (12,4){$\over[3]$};
\node (4') at (15,2){$\over[4]$};
\node (5') at (18,0){$\over[5]$};
\node (6') at (18,2){$\over[6]$};
\node (7') at (15,4){$\over[7]$};
\node (8') at (21,2){$\over[8]$};
\node (9') at (21,4){$\over[9]$};
\node (10') at (24,4)[rectangle]{$\over[10]$};
\node (11') at (24,2)[rectangle]{$\over[11]$};
\node (12') at (24,0)[rectangle]{$\over[12]$};
\node (15') at (15,6)[rectangle, gray!30]{$\over[15]$};
\node (14') at (18,6)[rectangle, gray!30]{$\over[14]$};
\node (13') at (21,6)[rectangle, gray!30]{$\over[13]$};
\drawpath{3,13,1}{thin, gray!30}
\drawpath{6,14,4}{vthick, gray!30}
\drawpath{1',15,9}{vthick, gray!30}
\drawpath{10',13',9'}{thin, gray!30}
\drawpath{8',14',6'}{vthick, gray!30}
\drawpath{5',15',1'}{vthick, gray!30}
\drawpath{1,3,7,3',7',9',10'}{red, thin}
\drawpath{2,4,6,8,2',4',6',8',11'}{red,vthick}
\drawpath{5,9,1',5',12'}{red,vthick}
\drawpath{12',6',5',2',9,6,5}{vthick}
\drawpath{11',9',8',7',4',3',8,7,4,3,2}{vthick}
\drawpath{5,2,1}{dashed, vthick}
\drawpath{10',11',12'}{dashed,vthick}
\drawpath{13,14,15}{dashed,vthick,gray!30}
\drawpath{15',14',13'}{dashed,vthick,gray!30}
\node at (6,-1)[draw=none]{$Q_{B_3}^{\bi}$};
\node at (18,-1)[draw=none]{$Q_{B_3}^{\over[\bi]}$};
\end{tikzpicture}

\begin{tikzpicture}[baseline=(1), every node/.style={inner sep=0, minimum size=0.5cm, thick, draw,circle}, x=0.5cm, y=0.75cm]
\node (1) at (0,4)[rectangle]{$1$};
\node (2) at (0,2)[rectangle]{$2$};
\node (3) at (3,10.5){$3$};
\node (4) at (3,2){$4$};
\node (5) at (0,0)[rectangle]{$5$};
\node (6) at (6,10.5){$6$};
\node (7) at (3,7.5){$7$};
\node (8) at (9,2){$8$};
\node (9) at (6,9){$9$};
\node (13) at (3,12)[rectangle,gray!30]{$13$};
\node (14) at (6,12)[rectangle,gray!30]{$14$};
\node (15) at (9,12)[rectangle,gray!30]{$15$};
\node (1') at (9,9){$\over[1]$};
\node (2') at (6,7.5){$\over[2]$};
\node (3') at (3,6){$\over[3]$};
\node (4') at (6,6){$\over[4]$};
\node (5') at (9,6){$\over[5]$};
\node (6') at (6,2){$\over[6]$};
\node (7') at (3,4){$\over[7]$};
\node (8') at (6,0){$\over[8]$};
\node (9') at (9,4){$\over[9]$};
\node (10') at (12,4)[rectangle]{$\over[10]$};
\node (11') at (12,2)[rectangle]{$\over[11]$};
\node (12') at (12,0)[rectangle]{$\over[12]$};
\node (13') at (0,7.5)[rectangle,gray!30]{$\over[13]$};
\node (14') at (0,9)[rectangle,gray!30]{$\over[14]$};
\node (15') at (0,10.5)[rectangle,gray!30]{$\over[15]$};
\drawpath{13,3}{thin, gray!30}
\drawpath{15,1',14,6,13}{vthick, gray!30}
\drawpath{3',13',7}{thin, gray!30}
\drawpath{2',14',9}{vthick, gray!30}
\drawpath{1',15',15}{vthick, gray!30}
\drawpath{15,14,13}{vthick, dashed, gray!30}
\drawpath{15',14',13'}{vthick, dashed, gray!30}
\drawpath{11',8',6',5}{vthick}
\drawpath{10',8,9',4,7',2}{vthick}
\drawpath{7',3',1}{thin}
\drawpath{6',4',4}{vthick}
\drawpath{12',5',8'}{vthick}
\drawpath{3',4',5'}{vthick}
\drawpath{3,7,3'}{thin}
\drawpath{3,6,9,1',5',9,2',4',7,2',3}{vthick}
\drawpath{1,7',9',10'}{red,thin}
\drawpath{2,4,6',8,11'}{red,vthick}
\drawpath{5,8',12'}{red,vthick}
\drawpath{5,2,1}{dashed, vthick}
\drawpath{12',11',10'}{dashed,vthick}
\node at (-2,6)[draw=none]{$\simeq_{\mu_{\cR_1}}$};
\node at (14,2)[draw=none]{$Q_{B_3}^{\bi}$};
\node at (14,9)[draw=none]{$Q_{B_3}^{\over[\bi]}$};
\end{tikzpicture}
\caption{$\cP_\l^\fb\simeq \til{\cP^\fb}$ in type $B_3$ with $F_i$ paths shown in red.}\label{mutateB3}
\end{figure}

The $E_i$-paths are described explicitly in \cite{Ip16} given by (cf Figure \ref{B3q})
\Eqn{
\cP_{E_1}&=(10,8,7,4,3,13), \\
\cP_{E_2}&=(11,9,6,14), \\
 \cP_{E_3}&=(12,15),
 }
and correspond to the following decomposition of $g_b$ which follows from \eqref{guv} and \eqref{gdouble}:
\Eqn{
g_b({\be'}_1^-)&=g_{b_s}(X_{3,4,7,8,10})g_{b_s}(X_{4,7,8,10})g_b(X_{4,7^2,8^2,10^2})g_{b_s}(X_{7,8,10})g_{b_s}(X_{8,10})g_b(X_{8,10^2})g_{b_s}(X_{10}),\\
g_b({\be'}_2^-)&=g_b(X_{6,9,11})g_b(X_{9,11})g_b(X_{11}),\\
g_b({\be'}_3^-)&=g_b(X_{12}).
}

The $F_i$-paths are given by the horizontal red paths:
\Eqn{
\cP_{F_1}&=(1,3,7,10), \\
 \cP_{F_2}&=(2,4,6,8,11), \\
  \cP_{F_3}&=(5,9,12).
}
Identifying $X_{\over[1]}:=X_{12}\ox X_{\over[1]}, X_{\over[2]}:=X_{11}\ox X_{\over[2]}$ and $X_{\over[3]}:=X_{10}\ox X_{\over[3]}$ in $\cX^{\bi\over[\bi]}$, we obtain the unitary transformation $\Phi_1$ corresponding to a sequence of 35 mutations as follows:
{\Eqn{
\Phi_1=&g_{b_s}(X_{3,4,7,8,\over[3],\over[7],\over[9]})g_{b_s}(X_{4,7,8,\over[3],\over[7],\over[9]})g_b(X_{4,7^2,8^2,\over[3]^2,\over[7]^2,\over[9]^2})g_{b_s}(X_{7,8,\over[3],\over[7],\over[9]})g_{b_s}(X_{8,\over[3],\over[7],\over[9]})g_b(X_{8,\over[3]^2,\over[7]^2,\over[9]^2})\\
&g_{b_s}(X_{\over[3],\over[7],\over[9]})g_b(X_{6,9,\over[2],\over[4],\over[6],\over[8]})g_b(X_{9,\over[2],\over[4],\over[6],\over[8]})g_b(X_{\over[2],\over[4],\over[6],\over[8]})g_{b_s}(X_{3,4,7,8,\over[3],\over[7]})g_{b_s}(X_{4,7,8,\over[3],\over[7]})g_b(X_{4,7^2,8^2,\over[3]^2,\over[7]^2})\\
&g_{b_s}(X_{7,8,\over[3],\over[7]})g_{b_s}(X_{8,\over[3],\over[7]})g_b(X_{8,\over[3]^2,\over[7]^2})g_{b_s}(X_{\over[3],\over[7]})g_b(X_{6,9,\over[2],\over[4],\over[6]})g_b(X_{9,\over[2],\over[4],\over[6]})g_b(X_{\over[2],\over[4],\over[6]})g_b(X_{\over[1],\over[5]})g_b(X_{6,9,\over[2],\over[4]})\\
&g_b(X_{9,\over[2],\over[4]})g_b(X_{\over[2],\over[4]})g_{b_s}(X_{3,4,7,8,\over[3]})g_{b_s}(X_{4,7,8,\over[3]})g_b(X_{4,7^2,8^2,\over[3]^2})g_{b_s}(X_{7,8,\over[3]})g_{b_s}(X_{8,\over[3]})g_b(X_{8,\over[3]^2})g_{b_s}(X_{\over[3]})\\
&g_b(X_{6,9,\over[2]})g_b(X_{9,\over[2]})g_b(X_{\over[2]})g_b(X_{\over[1]}),
}
which corresponds to the mutation sequence of $Q^{\bi\over[\bi]}$ at $$\mu=(\over[1],\over[2],9,6,\over[3],8,\over[3],7,4,7,3,\over[4],\over[2],9,\over[5],\over[6],\over[4],\over[2],\over[7],8,\over[7],\over[3],4,\over[3],7,\over[8],\over[6],\over[4],\over[9],8,\over[9],\over[7],4,\over[7],\over[3]).$$

Finally, the unitary transformation $\Phi_3$ is constructed similarly, where we use $\bf^{k,+}$ instead of $\bf^{k,-}$, and $\bi$ instead of $\over[\bi]$. Using the standard indexing as in Figure \ref{finalB3}, and identifying $X_{1'}:=X_{10}\ox X_{1'}, X_{2'}:=X_{11}\ox X_{2'}$ and $X_{3'}:=X_{12}\ox X_{3'}$ in $\cX^{\bi\bi}$ we have
{
\Eqn{
\Phi_3=&g_b(X_{5',9'})g_b(X_{6,9,2',4',6',8'})g_b(X_{9,2',4',6',8'})g_b(X_{2',4',6',8'})g_{b_s}(X_{3,4,7,8,1',3',7'})g_{b_s}(X_{4,7,8,1',3',7'})\\
&g_b(X_{4,7^2,8^2,{1'}^2,{3'}^2,{7'}^2})g_{b_s}(X_{7,8,1',3',7'})g_{b_s}(X_{8,1',3',7'})g_b(X_{8,{1'}^2,{3'}^2,{7'}^2})g_{b_s}(X_{1',3',7'})g_b(X_{6,9,2',4',6'})\\
&g_b(X_{9,2',4',6'})g_b(X_{2',4',6'})g_b(X_{5'})g_b(X_{6,9,2',4'})g_b(X_{9,2',4'})g_b(X_{2',4'})g_{b_s}(X_{3,4,7,8,1',3'})g_{b_s}(X_{4,7,8,1',3'})\\
&g_b(X_{4,7^2,8^2,{1'}^2,{3'}^2})g_{b_s}(X_{7,8,1',3'})g_{b_s}(X_{8,1',3'})g_b(X_{8,{1'}^2,{3'}^2})g_{b_s}(X_{1',3'})g_b(X_{6,9,2'})g_b(X_{9,2'})g_b(X_{2'})\\
&g_{b_s}(X_{3,4,7,8,1'})g_{b_s}(X_{4,7,8,1'})g_b(X_{4,7^2,8^2,{1'}^2})g_{b_s}(X_{7,8,1'})g_{b_s}(X_{8,1'})g_b(X_{8,{1'}^2})g_{b_s}(X_{1'}),
}
which corresponds to the mutation sequence of $Q^{\bi\bi}$ at
$$\mu=(1',8,1',7,4,7,3,2',9,6,3',8,3',1',4,1',7,4',2',9,5',6',4',2',7',8,7',3',4,3',1',8',6',4',9').$$

\begin{figure}[H]
\centering
\begin{tikzpicture}[every node/.style={inner sep=0, minimum size=0.5cm, thick, draw,circle}, x=0.5cm, y=0.75cm]
\node (1) at (0,4)[rectangle]{$1$};
\node (2) at (0,2)[rectangle]{$2$};
\node (3) at (3,4){$3$};
\node (4) at (3,2){$4$};
\node (5) at (0,0)[rectangle]{$5$};
\node (6) at (6,2){$6$};
\node (7) at (9,4){$7$};
\node (8) at (9,2){$8$};
\node (9) at (6,0){$9$};
\node (13) at (3,6)[rectangle, gray!30]{$13$};
\node (14) at (6,6)[rectangle, gray!30]{$14$};
\node (15) at (9,6)[rectangle, gray!30]{$15$};
\node (1') at (12,4){$1'$};
\node (2') at (12,2){$2'$};
\node (3') at (15,4){$3'$};
\node (4') at (15,2){$4'$};
\node (5') at (12,0){$5'$};
\node (6') at (18,2){$6'$};
\node (7') at (21,4){$7'$};
\node (8') at (21,2){$8'$};
\node (9') at (18,0){$9'$};
\node (10') at (24, 4)[rectangle]{$10'$};
\node (11') at (24, 2)[rectangle]{$11'$};
\node (12') at (24, 0)[rectangle]{$12'$};
\node (13') at (15,6)[rectangle, gray!30]{$13'$};
\node (14') at (18,6)[rectangle, gray!30]{$14'$};
\node (15') at (21,6)[rectangle, gray!30]{$15'$};
\drawpath{3,13,1}{thin, gray!30}
\drawpath{6,14,4}{vthick, gray!30}
\drawpath{5',15,9}{vthick, gray!30}
\drawpath{3',13',1'}{thin, gray!30}
\drawpath{6',14',4'}{vthick, gray!30}
\drawpath{12',15',9'}{vthick, gray!30}
\drawpath{1,3,7,1',3',7',10'}{red, thin}
\drawpath{2,4,6,8,2',4',6',8',11'}{red,vthick}
\drawpath{5,9,5',9',12'}{red,vthick}
\drawpath{10',8',7',4',3',2',1',8,7,4,3,2}{vthick}
\drawpath{11',9',6',5',2',9,6,5}{vthick}
\drawpath{5,2,1}{dashed, vthick}
\drawpath{12',11',10'}{dashed,vthick}
\drawpath{13,14,15}{dashed,vthick,gray!30}
\drawpath{13',14',15'}{dashed,vthick,gray!30}
\node at (6,-1)[draw=none]{$Q_{B_3}^{\bi}$};
\node at (18,-1)[draw=none]{$Q_{B_3}^{\bi}$};
\end{tikzpicture}

\begin{tikzpicture}[baseline=(1), every node/.style={inner sep=0, minimum size=0.5cm, thick, draw,circle}, x=0.5cm, y=0.75cm]
\node (1) at (0,4)[rectangle]{$1$};
\node (2) at (0,2)[rectangle]{$2$};
\node (3) at (3,10.5){$3$};
\node (4) at (3,2){$4$};
\node (5) at (0,0)[rectangle]{$5$};
\node (6) at (6,10.5){$6$};
\node (7) at (3,7.5){$7$};
\node (8) at (9,2){$8$};
\node (9) at (6,9){$9$};
\node (13) at (3,12)[rectangle,gray!30]{$13$};
\node (14) at (6,12)[rectangle,gray!30]{$14$};
\node (15) at (9,12)[rectangle,gray!30]{$15$};
\node (1') at (9,9){$1'$};
\node (2') at (6,7.5){$2'$};
\node (3') at (3,6){$3'$};
\node (4') at (6,6){$4'$};
\node (5') at (9,6){$5'$};
\node (6') at (6,2){$6'$};
\node (7') at (3,4){$7'$};
\node (8') at (6,0){$8'$};
\node (9') at (9,4){$9'$};
\node (10') at (12,4)[rectangle]{$10'$};
\node (11') at (12,2)[rectangle]{$11'$};
\node (12') at (12,0)[rectangle]{$12'$};
\node (13') at (0,10.5)[rectangle,gray!30]{$13'$};
\node (14') at (0,9)[rectangle,gray!30]{$14'$};
\node (15') at (0,7.5)[rectangle,gray!30]{$15'$};
\drawpath{13,3,13',13}{thin, gray!30}
\drawpath{15,14,13}{vthick, dashed, gray!30}
\drawpath{13',14',15'}{vthick, dashed, gray!30}
\drawpath{9,14',6}{vthick, gray!30}
\drawpath{5',15',1'}{vthick, gray!30}
\drawpath{9,15,1'}{vthick, gray!30}
\drawpath{3,14,6}{vthick, gray!30}
\drawpath{11',8',6',5}{vthick}
\drawpath{10',8,9',4,7',2}{vthick}
\drawpath{7',3',1}{thin}
\drawpath{6',4',4}{vthick}
\drawpath{12',5',8'}{vthick}
\drawpath{3,7,3'}{thin}
\drawpath{3',2',7,6,3}{vthick}
\drawpath{6,9,2',4'}{vthick}
\drawpath{4',1',9}{vthick}
\drawpath{1',5'}{vthick}
\drawpath{1,7',9',10'}{red,thin}
\drawpath{2,4,6',8,11'}{red,vthick}
\drawpath{5,8',12'}{red,vthick}
\drawpath{5,2,1}{dashed, vthick}
\drawpath{12',11',10'}{dashed,vthick}
\node at (-2,6)[draw=none]{$\simeq_{\mu_{\cR_3}}$};
\node at (14,2)[draw=none]{$Q_{B_3}^{\bi}$};
\node at (14,9)[draw=none]{$Q_{B_3}^{\bi}$};
\end{tikzpicture}
\caption{$\til{\cP^\fb}\ox \til{\cP^\fb}\simeq \til{\cP^\fb}\ox \cM$ in type $B_3$ with $F_i$ paths shown in red.}\label{finalB3}
\end{figure}

\newpage


\begin{thebibliography}{99}
\bibitem{BFZ}
A. Berenstein, S. Fomin, A. Zelevinsky,
\textit{Cluster algebras III, Upper bounds and double Bruhat cells}, 
J. Amer. Math. Soc. \textbf{15}(2) (2002): 497–529
 \bibitem{BT}
  A.G. Bytsko, K. Teschner,
  \textit{R-operator, co-product and Haar-measure for the modular double of $\cU_q(\sl(2,\R))$},
  Comm. Math. Phys., \textbf{240},  (2003): 171-196
\bibitem{D}
V. G. Drinfeld,
\textit{Hopf algebras and the quantum Yang-Baxter equation},
Doklady Akademii Nauk SSSR, \textbf{283} (5), (1985): 1060-1064
\bibitem{Fa1}
  L.D. Faddeev,
  \textit{Discrete Heisenberg-Weyl group and modular group},
  Lett. Math. Phys., \textbf{34}, (1995): 249-254
\bibitem{Fa2}
  L.D. Faddeev,
  \textit{Modular double of quantum group},
  arXiv:math/9912078v1 [math.QA], (1999)
  \bibitem{FKa}
  L.D. Faddeev, R.M. Kashaev,
  \textit{Quantum dilogarithm},
  Modern Phys. Lett. \textbf{A9}, (1994): 427-434
  \bibitem{FG2}
V. Fock, A. Goncharov,
\textit{Cluster $\cX$-varieties, amalgamation, and Poisson-Lie groups},
Algebraic geometry and number theory. Birkh\"auser Boston, (2006): 27-68
 \bibitem{FI}
I. Frenkel, I. Ip,
\textit{Positive representations of split real quantum groups and future perspectives},
Int. Math. Res. Notices, \textbf{2014} (8), (2014): 2126-2164
\bibitem{FK}
  I. Frenkel, H. Kim,
  \textit{Quantum Teichm\"{u}ller space from quantum plane},
Duke Math. J., \textbf{161} (2), (2012): 305-366
 \bibitem{Ip1}
  I. Ip,
  \textit{Representation of the quantum plane, its quantum double and harmonic analysis on $GL_q^+(2,R)$},  
Sel. Math. New Ser., \textbf{19} (4), (2013):987-1082
\bibitem{Ip2}
I. Ip,
\textit{Positive representations of split real simply-laced quantum groups}, arXiv:1203:2018, (2012)
\bibitem{Ip3}
I. Ip,
\textit{Positive representations of split real non-simply-laced quantum groups}, 
J. Alg (2015) \textbf{425}, (2015): 245-276
\bibitem{Ip14}
I. Ip,
\textit{On tensor product of positive representations of split real quantum Borel algebra $\cU_{q\til{t}}(\fb_\R)$},
Trans. Amer. Math. Soc., doi:10.1090/tran/7110, to appear (2014)
\bibitem{Ip16}
I. Ip,
  \textit{Cluster realization of $\cU_q(\g)$ and factorization of universal $\cR$ matrix},
  arXiv1612:05641, (2016)
  \bibitem{J}
M. Jimbo,
\textit{A $q$-difference analogue of $\cU(\g)$ and the Yang-Baxter equation},
Lett. Math. Phys., \textbf{10}, (1985): 63-69
\bibitem{Ka1}
R. M. Kashaev,
\textit{Heisenberg double and pentagon relation},
St. Petersburg Math. J., \textbf{8}, (1997): 585-592
\bibitem{Le}
I. Le,
\textit{An approach to cluster structures on moduli of local systems for general groups},
arXiv:1606.00961, (2016)
\bibitem{PT1}
  B. Ponsot, J. Teschner,
  \textit{Liouville bootstrap via harmonic analysis on a noncompact quantum group},
  arXiv: hep-th/9911110, (1999)
  \bibitem{PT2}
  B. Ponsot, J. Teschner,
  \textit{Clebsch-Gordan and Racah-Wigner coefficients for a continuous series of representations of $\cU_q(\mathfrak{sl}(2,\R))$},
  Comm. Math. Phys., \textbf{224}, (2001): 613-655
\bibitem{RT1}
N. Reshetikhin, V. Turaev,
\textit{Ribbon graphs and their invariants derived from quantum groups},
Comm. Math. Phys. \textbf{127}, No. 1, (1990): 1-26
\bibitem{RT2}
N. Reshetikhin, V. Turaev,
\textit{Invariants of 3-manifolds via link polynomials and quantum groups},
Invent. Math. 103 (1): 547. doi:10.1007/BF01239527 (1991)
\bibitem{SS16}
G. Schrader, A. Shapiro,
\textit{A cluster realization of $\cU_q(\sl_n)$ from quantum character varieties},
arXiv:1607.00271 (2016)
\bibitem{SS17}
G. Schrader, A. Shapiro,
\textit{Continuous tensor categories from quantum groups I: algebraic aspects},
arXiv:1708.08107 (2017)
\bibitem{Sch}
K. Schm\"{u}dgen, 
\textit{Operator representations of $\R_q^2$},
Publ. RIMS Kyoto Univ. \textbf{28}, (1992):1029–1061
\end{thebibliography}
\end{document}